\documentclass[a4paper,reqno, 11pt]{amsart}   	
\usepackage[DIV=12, oneside]{typearea}			
\usepackage[utf8]{inputenc}
\usepackage[T1]{fontenc}
\usepackage[english]{babel}
\usepackage[centertags]{amsmath}
\usepackage{amstext,amssymb,amsopn,amsthm}
\usepackage{mathrsfs}
\usepackage{dsfont}
\usepackage{bbm}
\usepackage{thmtools}
\usepackage{mathtools}
\usepackage{graphicx}
\usepackage[titletoc,title]{appendix}
\usepackage{etexcmds}

\usepackage{multirow}

\usepackage{tikz}
\usepackage{pgfplots}
\pgfplotsset{compat=newest}
\usetikzlibrary{arrows.meta, decorations.pathreplacing, positioning}
\usetikzlibrary{calc, patterns,angles,quotes, tikzmark, fit, shapes.geometric}
\tikzset{%
	ebo unit/.store in=\ebounit,
	ebo corners/.style={rounded corners=#1\ebounit},
}
\usepgfplotslibrary{fillbetween}
\definecolor{c595959}{RGB}{89,89,89}
\definecolor{c5a5a5a}{RGB}{90,90,90}

\usepackage{enumitem}
\setlist[enumerate]{itemsep=0mm}

\usepackage[colorlinks=true, linkcolor=black, urlcolor={black},
pdfborder={0 0 0}, citecolor=black]{hyperref}

\renewcommand{\d}{\textnormal{d}}

\addto\extrasenglish{}
\addto\extrasenglish{}
\addto\extrasenglish{}

\theoremstyle{plain}
\declaretheorem[title=Theorem, parent=section]{theorem}
\declaretheorem[title=Lemma,sibling=theorem]{lemma}
\declaretheorem[title=Proposition,sibling=theorem]{proposition}

\theoremstyle{definition}
\declaretheorem[title=Definition,sibling=theorem]{definition}
\declaretheorem[title=Remark,sibling=theorem]{remark}
\declaretheorem[title=Remark, numbered=no]{remark*}

\declaretheorem[title=Assumption, numbered=no]{assumption*}

\numberwithin{equation}{section}

\newcommand{\N}{\mathds{N}}
\newcommand{\R}{\mathds{R}}
\newcommand{\C}{\mathds{C}}
\newcommand{\Z}{\mathds{Z}}

\newcommand{\sS}{\mathscr{S}}

\newcommand{\sF}{\mathscr{F}}

\def\hmath$#1${\texorpdfstring{{\rmfamily\textit{#1}}}{#1}}

\newcommand{\eps}{\varepsilon}

\newcommand{\loc}{\mathrm{loc}}

\newcommand{\I}[1]{(\text{I}_{#1})}
\newcommand{\II} [1] {(\text{II}_{#1})}
\newcommand{\III}[1]{(\text{III}_{#1})}
\newcommand{\IV}[1]{(\text{IV}_{#1})}

\DeclareMathOperator{\dist}{dist}

\DeclareMathOperator{\diam}{diam}

\DeclareMathOperator{\supp}{supp}

\DeclareMathOperator{\pv}{p.v.}

\newcommand{\1}{{\mathbbm{1}}}

\usepackage{esint}
\newcommand{\norm}[1]{\left\lVert#1\right\rVert} 
\newcommand{\abs}[1]{\ensuremath{\left\vert#1\right\vert}} 
 
\newcommand{\distw}[2]{\dist(#1, #2)} 

\newcommand{\Gs}{\mathcal{G}_s(\lambda,\Lambda)}

\begin{document}
	\allowdisplaybreaks
	\title[Boundary regularity for general elliptic operators of order $2s$]{Boundary regularity for general elliptic operators \\ of order $2s$} 
	
	\author{Florian Grube}
	\author{Xavier Ros-Oton}

	\address{Fakult{\"a}t f{\"u}r Mathematik, Universit{\"a}t Bielefeld, Postfach 10 01 31, 33501 Bielefeld, Germany}
	\email{fgrube@math.uni-bielefeld.de}
	\address{ICREA, Pg. Llu\'is Companys 23, 08010 Barcelona, Spain \& Universitat de Barcelona, Departament de Matem\`atiques i Inform\`atica, Gran Via de les Corts Catalanes 585, 08007 Barcelona, Spain \& Centre de Recerca Matem\`atica, Barcelona, Spain}
	\email{xros@icrea.cat}
	\makeatletter
	\@namedef{subjclassname@2020}{%
		\textup{2020} Mathematics Subject Classification}
	\makeatother
	
	\subjclass[2020]{47G20, 35B65, 35S15, 35R11, 35R09, 60G51}
	
	\keywords{}

	\begin{abstract} 
We establish optimal $C^s$ boundary regularity for the most general class of (linear and translation invariant) nonlocal elliptic operator of order $2s$.
Namely, we consider L\'evy operators that are symmetric and its Fourier symbol satisfies $\mathcal{A}(\xi)\asymp |\xi|^{2s}$ in $\R^d$. 
This was only known when the kernel of the operator (or L\'evy measure) is either homogeneous or comparable to that of the fractional Laplacian, with different proofs in each case.
Our new proofs extend both at the same time, and work in a very general class of domains, under a $C^1$-Dini-type condition.
	\end{abstract}

	\hypersetup{pageanchor=false}
	\maketitle
	\hypersetup{pageanchor=true}

\section{Introduction}\label{sec:intro}

Nonlocal operators of order $2s$, with $s\in(0,1)$, of the type 
\begin{equation*}
	Lu(x):= \int_{\R^d}\big( u(x)-u(x+y)\big)K(y)dy,
\end{equation*}
with $K\geq0$ and $K(y)=K(-y)$, have attracted great interest in the last decades, especially since the works of Caffarelli and Silvestre on this topic \cite{Bas09,BaLe02,BaLe02b,BKK08,CaSi09,CaSi11a,CaSi11b,CKS10,ChSo98,DyKa20,Kas09,KaWe23,RS16a,Sil06}.
These operators appear naturally in a variety of settings, most notably in the study of L\'evy processes in Probability, the Boltzmann equation in Mathematical Physics, or pseudodifferential operators in Analysis; see e.g. \cite{FeRo24}.

A key question in this context is that of regularity: assume we have $Lu=f$ in $\Omega\subset\R^d$, what can we then say about the smoothness of solutions $u$?

The \emph{interior} regularity of solutions to these equations is nowadays very well understood. 
It was first studied for the fractional Laplacian and for kernels satisfying 
\begin{equation}\label{X1}
\frac{\lambda}{|y|^{d+2s}}\leq K(y) \leq \frac{\Lambda}{|y|^{d+2s}},\qquad 0<\lambda\leq \Lambda,
\end{equation}
and more recently for general elliptic operators of order $2s$, i.e., those for which
\begin{equation}\label{X2a}
\int_{B_{2r}\setminus B_r} K(dy) \leq \Lambda r^{-2s}\qquad \forall r>0,
\end{equation}
\begin{equation}\label{X2b}
\qquad\qquad\int_{B_r} |y\cdot e|^2 K(dy) \geq \lambda r^{2-2s}\qquad \forall r>0,\ e\in\mathbb{S}^{d-1}.
\end{equation}
Here, the kernel $K$ is not necessarily absolutely continuous, but can be a measure.

These two conditions are equivalent to the Fourier symbol $\mathcal{A}$ of $L$ satisfying
\[\tilde\lambda |\xi|^{2s} \leq \mathcal{A}(\xi) \leq \tilde \Lambda |\xi|^{2s},\qquad 0<\tilde\lambda\leq \tilde\Lambda,\]
and thus this is the most general class of elliptic operators of order $2s$ for which one can expect maximum principle and regularization properties; see e.g. \cite[Chapter 2]{FeRo24}.
We denote this class $\Gs$; see \autoref{sec:prelims} below.

The \emph{boundary} regularity of solutions was first studied for the fractional Laplacian \cite{Get61,Lan72,RS14}, and later for general operators $L$ that in addition satisfy
\begin{equation}\label{X3}
K\ \textrm{is homogeneous of degree}\ -d-2s;
\end{equation}
see \cite{AbGr23,Gru15,Gru22,RS16,RS16a, Gru24}. These are usually called stable operators.

It turns out that, while solutions are $C^{2s+\alpha}_{\rm loc}(\Omega)$ in the interior (provided that $f$ is H\"older continuous), the optimal regularity of solutions up to boundary is $C^s(\overline\Omega)$.
Proving this for general stable operators \eqref{X3} relies very strongly on the fact that $(x_n)_+^s$ is an exact solution in $\{x_n>0\}$, for \emph{any} operator in this class; see \cite[Lemma 2.6.2]{FeRo24}.

Quite recently, the second author and Weidner proved that, quite surprisingly, $C^s$ regularity still holds for \emph{all} operators $L$ satisfying \eqref{X1}.
The proof of this is not based at all on explicit barriers, but on a construction through a free boundary problem that strongly relies on interior and boundary Harnack inequalities; see \cite{RoWe24} for more details.

The question we tackle in this paper is:
\[\textit{Does $C^s$ boundary regularity hold for general elliptic operators \eqref{X2a}-\eqref{X2b} of order $2s$?}\]

Notice that the proofs for operators satisfying \eqref{X3} rely on the explicit barrier $(x_+)^s$, while for those satisfying \eqref{X1} rely on Harnack inequality. Both fail for general operators \eqref{X2a}-\eqref{X2b}.

\subsection{Main results}

Here, we answer the question positively for general operators of order $2s$ with new techniques, using ideas from Fourier analysis to establish the following 1D Liouville theorem. 

\begin{theorem}\label{thm-intro1}
Let $L\in \Gs$.
Then, there exists a continuous function $b$ satisfying 
\[Lb=0 \quad\textrm{in}\quad \R_+, \qquad b=0\quad \textrm{in}\quad \R_-,\]
which is unique up to multiplicative constant among all functions with growth 
\[|b(t)|\leq C(1+|t|^{2s-\varepsilon}) \quad \textrm{in}\quad \R_+,\]
for some $\varepsilon>0$.
Furthermore, $b\in C^s_{\rm loc}(\R)$ and 
\[c(t_+)^s \leq b(t) \leq C(t_+)^s\quad \textrm{in}\quad \R\]
for some positive constants $C$ and $c$ depending only on $s,\lambda,\Lambda$.
\end{theorem}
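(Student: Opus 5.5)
First I reduce to one dimension. For $L\in\Gs$ with symbol $\mathcal{A}$, the one-dimensional operator is $L_1$ with symbol $A(\tau):=\mathcal{A}(\tau e)$ for a fixed direction $e$. It acts on functions $u(x)=v(x\cdot e)$ and is again a symmetric L\'evy operator on $\R$, with $\tilde\lambda|\tau|^{2s}\le A(\tau)\le\tilde\Lambda|\tau|^{2s}$. So I may assume $d=1$ and the only information is this two-sided bound on $A$, together with $A$ even and of the form $A(\tau)=\int(1-\cos(\tau y))K(dy)$.

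\textbf{Existence} will use a Wiener--Hopf factorization. Write $A(\tau)=|\tau|^{2s}m(\tau)$, where $m$ is bounded above and below. Take $\log m$ and split it through the Hilbert transform (Cauchy projection) into boundary values of functions holomorphic in the upper and lower half planes. This gives
\[
A(\tau)=\Phi_+(\tau)\Phi_-(\tau),\qquad \Phi_\pm(\tau)=(\mp i\tau+0)^{s}e^{\pm\theta_\pm(\tau)}.
\]
Since $A$ is even and real, $\Phi_-=\overline{\Phi_+}$. The candidate solution is
\[
\hat b(\tau)\propto (\Phi_+(\tau))^{-1}(i\tau+0)^{-1}
\]
(up to conventions), i.e. $b$ is the "potential" of the descending ladder height process. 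The support property $b=0$ on $\R_-$ comes from analyticity of $\hat b$ in the half plane (Paley--Wiener). The equation $Lb=\Phi_-\cdot(\Phi_+\hat b)$ is $\Phi_-$ times the transform of a Heaviside-type distribution. Because $\Phi_-$ extends analytically to the other half plane, $Lb$ is supported in $\R_-$, hence $Lb=0$ in $\R_+$.

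Probabilistically, $b$ is the renewal function $V$ of the ladder height process $H$ of the L\'evy process generated by $-L$. Its Laplace exponent $\kappa$ satisfies $\kappa(\mu)\kappa^*(\mu)\asymp$ (Fristedt--Pecherskii), and by symmetry $\kappa(\mu)\asymp\mu^s$. The two-sided bound $b(t)\asymp t^s$ then follows from the standard renewal estimate $V(t)\asymp 1/\kappa(1/t)$. To make this rigorous analytically, I would show that $\theta_+$ is bounded on the imaginary axis, via $|\log m|\le C$ and a Poisson-kernel estimate on $\mu\mapsto\Phi_+(i\mu)$. Then $b$ is nondecreasing, because $b$ is the Laplace inverse of a function of the form $1/(\mu\kappa(\mu))$ with $\kappa$ Bernstein. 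A Tauberian/monotone-density argument gives $ct^s\le b\le Ct^s$. Local $C^s$ regularity follows from subadditivity, $b(t+h)-b(t)\le b(h)\le Ch^s$, which holds for renewal functions; otherwise interior regularity gives it away from $0$. Continuity follows from the same estimate.

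\textbf{Uniqueness} is a Liouville argument. Take any $u$ with $Lu=0$ in $\R_+$, $u=0$ in $\R_-$, and $|u|\le C(1+|t|^{2s-\eps})$. Its Fourier transform is then a tempered distribution. Set $g:=Lu$, which is supported in $\overline{\R_-}$; the growth bound with $2s-\eps<2s$ makes $Lu$ well defined. So $A\hat u=\hat g$. Dividing by $\Phi_-$ gives
\[
\Phi_+\hat u=\hat g/\Phi_-.
\]
The left side is analytic in one half plane and the right side in the other, so both glue to an entire function. The growth hypotheses, made quantitative by a cutoff and the polynomial bounds on $\Phi_\pm$ away from $0$, turn this into a polynomial of degree $<1$ after accounting for the exponents $s$ and $2s-\eps$. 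Hence it is constant, and $u=c\,b$.

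The main obstacle is making the factorization rigorous for a merely measurable, non-smooth multiplier $m$. Such an $m$ only satisfies $\log m\in L^\infty$, so $\theta_\pm$ lie in BMO rather than $L^\infty$. The bound $b\asymp t^s$ therefore cannot be read off pointwise from the symbol. I would route it through the Laplace exponent on the imaginary axis, where Poisson averaging of $\log m$ against $\log|\tau|$-type kernels stays bounded. The growth exponent $2s-\eps$ is exactly what is needed to rule out a second independent solution (the analogue of $t^{2s-1}$ type behaviour) in the Liouville step.
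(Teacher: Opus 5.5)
\textbf{Existence.} Your existence argument takes a different route from the paper's. The paper never writes down $\hat b$. It obtains $b$ as a limit of rescaled solutions $b_R$ of $Lb_R=0$ in $(0,R)$, $b_R=R^s\1_{(R,\infty)}$ outside, gets $b_R\asymp x_+^s$ from the exit-time estimates in Proposition~\ref{prop:torsion_estimate}, and concludes with interior regularity and compactness. Building $b$ directly as the ladder-height renewal function is fine, provided you import from fluctuation theory that $\mu\mapsto\Phi_+(i\mu)$ is (a multiple of) a Bernstein function. That gives you monotonicity, subadditivity and a global $C^s$ bound for free.

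One step needs repair. If $\hat b$ is the boundary value from the side where $\Phi_+$ is analytic, then $\widehat{Lb}=\Phi_-\cdot(\pm i\tau+0)^{-1}$ is a product of boundary values from opposite sides. Analyticity of $\Phi_-$ alone does not put it in $\overline{\R_-}$. What does is that $\Phi_-(0)=0$ and $\Phi_-(\tau)/\tau\in L^1_{\rm loc}$: these annihilate the multiple of $\delta_0$ by which the two boundary values of $1/(i\zeta)$ differ.

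\textbf{Uniqueness: the gap at the origin.} The genuine gap is in the uniqueness step, exactly at $\tau=0$. The identity $\Phi_+\hat u=\hat g/\Phi_-$ does not make sense on all of $\R$: $\Phi_-$ vanishes at $0$, and $\sF u$ can contain $\delta_0$ (and $\delta_0'$ if $2s-\eps\ge1$). The two sides agree only on $\R\setminus\{0\}$, so they glue to a function $E$ holomorphic on $\C\setminus\{0\}$, not to an entire function.

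This is not a technicality. For your own candidate, $\Phi_+\hat b=(i\tau+0)^{-1}$ glues to $1/(i\zeta)$, which has a pole. If $E$ were entire and constant, you would get $\hat u=c/\Phi_+$, the transform of a $t_+^{s-1}$-type singular solution, not of $b$. As written, your argument would exclude $b$ itself.

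The missing step is what the paper does:
\begin{enumerate}
\item Write $\sF u=w+\sum_{k\le\lfloor 2s-\eps\rfloor}a_k\delta_0^{(k)}$.
\item Pass to $\zeta E$; multiplying by $\zeta$ kills the Dirac terms because $\Phi_+$ vanishes at $0$.
\item Show that $\zeta\Phi_+w$ has a removable singularity at $0$, using $|w(\xi)|\lesssim|\xi|^{-1-2s+\eps}$ near $0$, so that $|\zeta\Phi_+w|\lesssim|\zeta|^{\eps-s}$.
\end{enumerate}
This is the real role of the growth exponent $2s-\eps$: it caps the singularity of $E$ at a simple pole.

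\textbf{Uniqueness: behaviour at infinity.} The growth hypothesis on $u$ controls $\sF u$ only near $\xi=0$ and says nothing as $|\xi|\to\infty$. So polynomial bounds on $\Phi_\pm$ alone only tell you that $\zeta E$ is a polynomial of unknown degree. Its linear term is precisely the $t_+^{s-1}$ solution above. It has to be excluded by decay of $w$ at infinity, that is, by the \emph{regularity} of $u$, not by the exponent $2s-\eps$.

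The paper gets $|w(\xi)|\lesssim|\xi|^{\eps-2s}$ for $|\xi|\ge1$ from two inputs in Lemma~\ref{lem:int_reg_for_u}: the boundary bound $|u|\le Ct^s$ on $[0,1]$, and the scaled interior estimate $[u]_{C^{3s}(R,2R)}\lesssim R^{-s-\eps}$. These are fed into the dyadic Fourier estimate of Lemma~\ref{lem:distribution-estimates}. Only then is $\zeta\Phi_+w=O(|\zeta|^{1+\eps-s})$ sublinear, hence constant. This gives $\sF u=c/(\xi\Phi_+)$ plus Dirac terms, and the Dirac terms are removed by comparing with $b$ and using $u=b=0$ on $\R_-$.
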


Thanks to this, we prove $C^s$ regularity in $C^{1,\omega}$ domains, as well as a Hopf lemma and a boundary Harnack principle.
We assume the modulus of continuity $\omega$ satisfies 
		\begin{equation}\label{eq:s-dini}
			\int_0^1 \frac{\omega(t)^s}{t}\d t <\infty.
		\end{equation}

In particular, our results apply to $C^{1,\alpha}$ domains, and our new proofs unify those for \eqref{X3} and for \eqref{X1} at the same time, while at the same time applies to a more general class of domains than in \cite{RS14,RS16,RoWe24}.

\begin{theorem}\label{thm-intro2}
Let $L\in \Gs$ and $\Omega\subset \R^d$ be any bounded $C^{1,\omega}$ domain, with $\omega$ satisfying \eqref{eq:s-dini}.
Let $f\in L^\infty(\Omega\cap B_2)$, and  $u\in L_{2s-\eps}^\infty(\R^d)$ be any weak solution of
		\begin{equation}\label{eq:localised-equation}
			\begin{split}
				L u &= f \text{ in }\Omega\cap B_2,\\
				u&=0 \text{ on }\Omega^c\cap B_2.
			\end{split}
		\end{equation}
Then, $u\in C^s({B_{1}})$ with
		\begin{equation*}
			\norm{u}_{C^s(B_1)}\le C\big(\norm{f}_{L^\infty(\Omega\cap B_2)}+ \norm{u}_{L^\infty_{2s-\eps}(\R^d)} \big).
		\end{equation*}
		The constant $C$ depends only on $s,\eps,\lambda,\Lambda$, and $\Omega$. 
		
Furthermore, if $f\geq0$ in $\Omega\cap B_2$ and $u\geq0$ in $B_2^c$, then
 \[u\geq cd_\Omega^s \quad\textrm{in}\quad \Omega\cap B_1\]
for some $c>0$, where $d_\Omega$ is the distance to the boundary.
\end{theorem}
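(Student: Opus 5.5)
We plan to deduce \autoref{thm-intro2} from the 1D Liouville theorem \autoref{thm-intro1} by blowing up the equation at boundary points and comparing with the one-dimensional profile. For a unit vector $e$, set $b_e(x):=b_e(x\cdot e)$. Here $b_e$ is the function given by \autoref{thm-intro1} for the one-dimensional operator obtained by restricting $L$ to functions of $x\cdot e$. Its symbol is $\mathcal{A}(te)$, so it is again of order $2s$ with comparable constants. Then $Lb_e=0$ in $\{x\cdot e>0\}$ and $b_e\asymp (x\cdot e)_+^s$.

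The first step is a Liouville theorem in the half-space. Let $w$ solve $Lw=0$ in $\{x\cdot e>0\}$, vanish outside it, and grow at most like $|x|^{2s-\eps}$. We want to show $w=Kb_e$. To do this, we would take incremental quotients in directions tangential to the half-space; these are again solutions with lower growth. Interior and boundary H\"older estimates, iterated, show that $w$ depends only on $x\cdot e$. The 1D uniqueness in \autoref{thm-intro1} then concludes.

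The $C^s$ estimate is proved by contradiction and compactness, in the style of Ros-Oton--Serra. Suppose it fails. Then there are $L_k\in\Gs$, domains $\Omega_k$ with uniform $C^{1,\omega}$ bounds, and solutions $u_k$ with $\norm{f_k}_\infty+\norm{u_k}_{L^\infty_{2s-\eps}}\le1$, such that $\sup_{r}\sup_{x_0\in\partial\Omega_k} r^{-s}\norm{u_k}_{L^\infty(B_r(x_0))}\to\infty$. Define $\theta(r)$ as this supremum over $r'\ge r$, choose $r_k,x_k$ almost attaining it, and rescale: $v_k(x)=u_k(x_k+r_kx)/(r_k^s\theta(r_k))$. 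Then:
\begin{itemize}
\item the growth is controlled, $\norm{v_k}_{L^\infty(B_R)}\le CR^s$;
\item $v_k$ is nondegenerate on $B_1$;
\item $|L_kv_k|\le r_k^{s}/\theta(r_k)\to0$ in the rescaled domains, which flatten to a half-space.
\end{itemize}
The class $\Gs$ is closed under the scaling $K\mapsto r^{d+2s}K(r\,\cdot)$, and it is weakly compact via the symbols. Together with interior H\"older estimates and a uniform boundary H\"older barrier (a weak $C^\gamma$ decay near $\partial\Omega$), this gives convergence to a limit $v$. The limit solves $Lv=0$ in a half-space, vanishes outside it, and has growth $|x|^s$. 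By the Liouville step, $v=Kb_e$.

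To reach a contradiction we should first subtract the best ``$K_r b_e$'' fit at each scale, which is standard. The extra subtlety is that the domains are only $C^{1,\omega}$. Errors in flattening at scale $r$ are of size $\omega(r)^s$, and the Dini condition \eqref{eq:s-dini} makes these summable across dyadic scales. Interior estimates then combine with the boundary decay to give the full $C^s(B_1)$ norm.

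For the Hopf bound we build a subsolution. At each boundary point, $b_e$ in the inner normal direction, cut off and with the flat domain perturbed to $\Omega$, is a subsolution up to an error controlled again via \eqref{eq:s-dini}. Adding a small bump where $u\ge c>0$ uses the nonlocal positivity, since $u\ge0$ in $B_2^c$ and $f\ge0$. The comparison principle then yields $u\ge cd_\Omega^s$.

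The main obstacle is the Liouville step and the compactness, because no Harnack inequality is available for general $\Gs$ kernels. The tangential-derivative argument and the boundary H\"older barrier must rely only on the comparability $b\asymp t_+^s$ and the maximum principle. Handling the merely Dini-regular domain, rather than $C^{1,\alpha}$, is the second delicate point.
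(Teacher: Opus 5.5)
There is a genuine gap, at the point you yourself flag as delicate. A contradiction--compactness argument at the critical exponent $s$ cannot close, because the blow-up limit $Kb_e$ has exactly the growth $|x|^s$ that the normalization allows. Subtracting best fits $K_rb_e$ and rerunning at exponent $s$ gives at best $\|u-K_rb_e\|_{L^\infty(B_r)}\le Cr^s$ with $|K_{2r}-K_r|\le C$. Then $K_r$ may grow like $\log(1/r)$, and $|u|\le Cr^s$ does not follow. Bounding $K_r$ needs a \emph{summable} gain at every scale, which in a compactness argument means working at exponent $s+\alpha$. At that exponent the flattening errors must disappear after division by $r^{s+\alpha}\theta(r)$. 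These errors come from two places: $b_e\neq0$ on $\{x\cdot e>0\}\setminus\Omega$, where $u=0$, and $Lb_e\neq0$ on $\Omega\cap\{x\cdot e\le0\}$. After rescaling they have relative size $\omega(r)^s$. Condition \eqref{eq:s-dini} does not give $\omega(r)^s/(r^\alpha\theta(r))\to0$, since $\theta$ may blow up arbitrarily slowly; take for example $\omega(t)^s\sim|\log t|^{-2}$. Replacing $b_e$ by a true solution $g$ in $\Omega$, as in \autoref{th:expansion}, needs $cr^s\le\|g\|_{L^\infty(B_r(z))}\le Cr^s$ as input, which is exactly the $C^s$ bound and Hopf lemma you are trying to prove. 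So the sentence ``the Dini condition makes these summable across dyadic scales'' has no mechanism behind it inside a contradiction argument. This is why the paper notes that compactness seems incompatible with domains beyond $C^{1,\alpha}$. You also never construct the uniform boundary $C^\gamma$ estimate your compactness step needs, and without a Harnack inequality it is not standard.

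What is missing is a direct, quantitative scale-by-scale iteration, which is how the paper argues in \autoref{th:boundary-estimate}. It attaches half-spaces $H_k=\{x_d>-p_k\}$ from outside, with $p_k=c\sum_{n\ge k}r_n\omega(r_n)$, so that $\Omega\cap B_{r_k}\subset H_k$. It then proves inductively that $u\le M_kb(x_d+p_k)$ in $\Omega\cap B_{r_{k+1}}$. The tool is the maximum principle applied to $u-M_kb(x_d+p_{k+1})$ minus two corrections: a harmonic-measure term $\Phi_k$, which absorbs the shift $p_{k-1}-p_{k+1}$ using $b\in C^s$, and a torsion term $\Psi_k$. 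The nonlocal tail is estimated dyadically from the inductive bounds on larger annuli. This gives $M_{k+1}\le M_k(1+CN_k)+Cr_k^s$ with $N_k=\sum_{n\le k}2^{-s(k-n)}\omega(r_n)^s$. Condition \eqref{eq:s-dini} is precisely $\sum_kN_k<\infty$, so discrete Gr\"onwall bounds $M_k$. The Hopf bound (\autoref{prop:hopf}) is the mirror image: half-spaces $\{x_d>p_k\}$ inside $\Omega$, $m_{k+1}=m_k(1-CN_k)$, and $\prod_k(1-CN_k)>0$. Your Hopf sketch, a single barrier ``perturbed to $\Omega$'' with a Dini-controlled error, is not available for general $\Gs$. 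Since $K$ may be a singular measure, you cannot compute $L$ of curved barriers such as $b\circ d_\Omega$. Only exact half-space solutions are usable, and it is the multiscale structure that lets the Dini condition enter. Finally, this route needs only the existence of $b\asymp t_+^s$ with $b\in C^s$. Neither the uniqueness in \autoref{thm-intro1} nor the half-space Liouville theorem is used for \autoref{thm-intro2}; they enter only in the boundary Harnack principle.
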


The boundary Harnack principle reads as follows.

\begin{theorem}\label{th:boundary-harnack}
Let $L\in \Gs$ and $\Omega\subset \R^d$ be any bounded $C^{1,\omega}$ domain, with $\omega$ satisfying \eqref{eq:s-dini}.
Let $f_1,f_2\in L^\infty(\Omega\cap B_2)$ and  $u_1,u_2\in L_{2s-\eps}^\infty(\R^d)$ be weak solutions of
		\begin{equation}\label{eq:localised-equation-i}
			\begin{split}
				L u_i &= f_i \text{ in }\Omega\cap B_2,\\
				u_i&=0 \text{ on }\Omega^c\cap B_2,
			\end{split}
		\end{equation}
with $u_2>0$ in $\Omega\cap B_2$, $u_2\ge 0$ on $\R^d$, and $f_2\geq0$ in $\Omega\cap B_2$.
Then, $u_1/u_2\in C^\alpha(\overline\Omega\cap B_1)$ for any $\alpha\in(0,s)$.
\end{theorem}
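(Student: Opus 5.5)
The plan is to prove a $C^\alpha$ expansion at each boundary point: for every $z\in\partial\Omega\cap B_1$ there is a constant $q(z)$ with
\[
|u_1(x)-q(z)u_2(x)|\le C|x-z|^{\alpha}\,d_\Omega^s(x)\qquad x\in\Omega\cap B_1 .
\]
Combining this with interior estimates for $u_1/u_2$ in Whitney balls, via the standard dichotomy $|x-y|\lessgtr d_\Omega(x)/2$, then gives $u_1/u_2\in C^\alpha(\overline\Omega\cap B_1)$. Two inputs from \autoref{thm-intro2} are used throughout. The first is the $C^s$ bound, which gives $|u_1|\le Cd_\Omega^s$. The second is the Hopf lemma applied to $u_2$ (here $f_2\ge0$ and $u_2\ge0$ on $\R^d$), which gives $u_2\ge c\,d_\Omega^s$ in $\Omega\cap B_1$. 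Normalize so that $\|f_i\|_{L^\infty}+\|u_i\|_{L^\infty_{2s-\eps}}\le1$. Together these bounds already yield $|u_1/u_2|\le C$.

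The core step is an improvement-of-flatness iteration at a boundary point $z$ (translate to $z=0$). We build constants $q_k$ such that $w_k:=u_1-q_ku_2$ satisfies $|w_k|\le C2^{-k\alpha}d_\Omega^s$ in $\Omega\cap B_{2^{-k}}$, with growth control $|w_k|\le C2^{-k\alpha}(2^{k}|x|)^{s+\alpha}$ outside $B_{2^{-k}}$; note $s+\alpha<2s$ only if $\alpha<s$. The argument is by contradiction and compactness, run in the blow-up variables $v_k(x)=2^{k(s+\alpha)}w_k(2^{-k}x)\cdot(\text{normalisation})$. These solve $Lv=\tilde f$ in rescaled domains $2^k\Omega$, which flatten to a half-space by the Dini condition \eqref{eq:s-dini}. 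Because $L\in\Gs$ is scale invariant in the class (though not invariant itself), the rescaled operators remain in $\Gs$ and converge weakly, along a subsequence, to some $L_\infty\in\Gs$. The right-hand sides $\tilde f$ become small, of size $2^{-k(s-\alpha)}$, which is exactly why the restriction $\alpha<s$ is needed. Uniform $C^s$ bounds from \autoref{thm-intro2} give local uniform convergence. The limit $v$ solves $L_\infty v=0$ in $\{x_n>0\}$, vanishes in $\{x_n<0\}$, and grows like $|x|^{s+\alpha}\le|x|^{2s-\eps'}$.

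The main obstacle is the Liouville step: we must show such a $v$ equals $c\,b_\infty(x_n)$, where $b_\infty$ is the 1D profile of \autoref{thm-intro1} for the one-dimensional restriction of $L_\infty$ in direction $e_n$. For this I would first show that $v$ is one-dimensional, by a standard incremental-quotient argument in the tangential directions. Tangential differences $v(\cdot+h)-v$ solve the same problem with the same growth, so applying the $C^s$ estimate iteratively shows that $v$ is $C^{1}$ in the tangential variables with controlled growth. Hence $\partial_{x'}v$ has growth below $s+\alpha-1$, and a Liouville argument then forces $\partial_{x'}v$ to be constant, and in fact zero because $v$ vanishes in the lower half-space. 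Uniqueness in \autoref{thm-intro1} then gives $v=c\,b_\infty(x_n)$.

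Similarly, $u_2$ rescaled converges (after normalisation, using Hopf) to $c'b_\infty(x_n)$ with $c'>0$. We then choose $q_{k+1}=q_k+(c/c')\,2^{-k\alpha}\cdot(\text{scaling})$, which kills the leading term and contradicts the failure of the next step. The increments $|q_{k+1}-q_k|\le C2^{-k\alpha}$ are summable, so $q_k\to q(0)$, and the expansion follows. A subtle point is that $L_\infty$ depends on the subsequence. This is handled by stating the contradiction hypothesis uniformly over all $L\in\Gs$ and all domains with the given $C^{1,\omega}$ modulus, and using the Dini condition to control the domain-flattening error in the sum over scales.
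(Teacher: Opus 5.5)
Your proposal is correct in outline, but it is organized differently from the paper. The paper never expands $u_1$ against $u_2$ directly. In Theorem~\ref{th:expansion} it expands an \emph{arbitrary}, possibly sign-changing, solution $u$ against one fixed reference function $g$ (with $Lg=0$ in $B_2\cap\Omega$ and $g=\1_{B_2^c}$ outside). It does this by a single RS16-type contradiction argument built on the monotone quantity $\theta(r)$ and the $L^2$-projections $q_{n,z,r}$, not by an improvement-of-flatness lemma followed by iteration. It then proves $[u/g]_{C^\alpha}\le C$ (Lemma~\ref{lem:quotient-regularity}). The boundary Harnack principle then reduces to the identity $u_1/u_2=(u_1/g)/(u_2/g)$, where $u_2/g$ is bounded below by the Hopf lemma for $u_2$ together with $g\le Cd_\Omega^s$. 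Going through $g$ makes the expansion a statement about every solution, with constants independent of the particular solution, and positivity of $u_2$ is used only in that last algebraic step.

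Your direct route skips the auxiliary function and gives the sharper form $|u_1-q(z)u_2|\le C|x-z|^\alpha d_\Omega^s$ immediately. The price is that your compactness lemma depends on the non-quantitative Hopf constant of $u_2$, so it must be stated for comparison functions with fixed bounds $c_0d_\Omega^s\le u_2\le C_0d_\Omega^s$. This is harmless for the qualitative claim.

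Your half-space Liouville step is a variant of Proposition~\ref{prop:liouville-half-space}. You get tangential $C^1$ regularity from iterated $C^s$ estimates on difference quotients, then a decay Liouville for $\partial_{x'}v$ (whose growth exponent $s+\alpha-1$ is $<s$), then 1D uniqueness. The paper instead shows that $u$ is affine in $x'$ and removes the linear part by applying the 1D Liouville theorem to $\partial_i u$.

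Two small bookkeeping remarks. First, the growth control should read $|w_k(x)|\le C|x|^{s+\alpha}$ for $2^{-k}\le|x|\le 1$; your expression $C2^{-k\alpha}(2^k|x|)^{s+\alpha}$ is too large by a factor $2^{ks}$ and does not match your inner bound at $|x|=2^{-k}$. Second, flattening $2^k(\Omega-z)$ in a compactness argument only needs $\omega(r)\to0$. The Dini condition \eqref{eq:s-dini} enters only through the uniform $C^s$ and Hopf estimates, not through a sum over scales.
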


As said above, these results were only known when $L$ satisfies either \eqref{X3} or \eqref{X1}, by completely different arguments.

Our new results hold for the most general class of (linear and symmetric) elliptic operators of order $2s$.
Moreover, $C^s$ boundary regularity is known to fail for (linear) non-symmetric operators \cite{DRSV22} as well as for (symmetric) fully nonlinear operators \cite{RS16a}.
$C^s$ boundary regularity is also known to fail for nonlocal equations with bounded measurable coefficients, both in non-divergence or in divergence form \cite{RS16a}.
Let us also refer to \cite{ChSo26} for optimal domain assumptions under which solutions are $C^s$ for operators $L$ satisfying \eqref{X1}.

\subsection{Strategy of the proofs}

Let us describe the main ideas and ingredients of our proofs.

	\subsubsection{1D Liouville theorem} 
A first key contribution of our work is the 1D Liouville theorem.
Both existence and uniqueness were open, and are needed in the proofs of Theorems \ref{thm-intro2} and~\ref{th:boundary-harnack}. 

To establish the existence of a solution $b$ we employ results from the field of L{\'e}vy-processes; see \cite{GrRy12} and \cite{BGR15}. 
Therein, the behavior of the expected exit time of the associated stochastic process from an interval $(0,R)$ is studied. 
Using known interior regularity results and taking appropriate limits $(R\to \infty)$ allows us to construct half line solutions that grow like~$t_+^s$.

The most difficult part here turns out to be the uniqueness.
We do not have any tools like Harnack inequality, nor we have any qualitative properties of the solution $b$ that we can use.
Instead, we find a completely new argument based on Fourier analysis and the so-called Wiener-Hopf factorization.
The argument is as follows.

First, we consider the Fourier transform $\hat b(\xi)$ which (by Paley-Wiener theory, and since $b\equiv0$ in $\R_-$) is analytic in the lower half-plane ${\rm Im}(\xi)<0$.
On the other hand, by the same reason, since $Lb\equiv0$ in $\R_+$, $\hat f(\xi):=\widehat{Lb}(\xi)$ is analytic in the upper half-plane  ${\rm Im}(\xi)>0$.
Moreover, on the real line we have $\hat f=\mathcal{A}\hat{b}$, where $\mathcal{A}$ is the symbol of $L$.

Using the properties of $L$, we can show that its symbol $\mathcal{A}$ can be factored as 
\[\mathcal{A}=\mathcal{A}_+\mathcal{A}_-,\]
where $\mathcal{A}_+$ (resp. $\mathcal{A}_-$) is analytic and has no zeroes in the upper (resp. lower) half plane.
This is called a Wiener-Hopf factorization.

Combining the previous information, we get that 
\[\mathcal{A}_-\hat b=\frac{\hat f}{\mathcal{A}_+},\]
where the LHS is analytic in  ${\rm Im}(\xi)<0$ and the RHS is analytic in  ${\rm Im}(\xi)>0$.
This means they both equal a function $J(\xi)$ which is analytic in  ${\rm Im}(\xi)\neq0$.
Furthermore, we prove that this function is analytic everywhere, except possibly at the origin.

By analyzing the growth of $J$ both at infinity and at the origin, we are able to apply Liouville theorem for analytic functions in $\mathbb C$ to deduce that we must have 
\[J(\xi)=\frac{a}{\xi} \qquad \textrm{and thus}\qquad \hat b(\xi) = \frac{a}{\xi \mathcal{A}_-(\xi)}\]
for some constant $a$.
And this means that the solution is unique up to multiplicative constant.

	\subsubsection{Boundary regularity} 
	
	In order to prove boundary regularity like \autoref{th:Cs-regularity}, it suffices to combine interior regularity, see \cite[Theorem 2.4.3, Proposition 2.4.4]{FeRo24}, with a boundary decay estimate of the type $|u(x)|\le C d_{\Omega}(x)^s$. To obtain this boundary decay estimate, we use a barrier function in the half space that we get from Theorem \ref{thm-intro1}. 
	
	The proof in \cite{RoWe24} (as well as previous ones) is based on a contradiction compactness arguments that works in $C^{1,\alpha}$ domains.
We expect the same method to be applicable in our situation, however, since we want to treat more general domains, we need a different approach because the contradiction compactness argument seems incompatible with domains more general than $C^{1,\alpha}$.

We circumvent this issue by employing a dyadic linear approximation of the boundary at a point $z\in \partial\Omega$, say $z=0$, strictly from outside of the domain, see \autoref{fig:geom-iteration}. 
This allows us to inductively compare the real solution $u$ to a rotated, translated, and truncated half space solution. 
A comparison principle yields positive constants $M_k$ such that
	\begin{equation*}
		u(x)\le M_k (x_d+p_k)_+^s \text{ for }x\in B_{r_k}(0)\cap \Omega.
	\end{equation*} 
	This is an adaptation of the classical ideas from \cite{LaUr88} to the nonlocal setting. 
		
	\begin{figure}[!ht]
		\centering
		\begin{tikzpicture}[scale=1.3]
		
		\draw[-{Stealth}, gray, thick] (-4.5, 0) -- (4.5, 0) node[right, black] {$x_d = 0$};
		\draw[-{Stealth}, gray, thick] (0, -2.8) -- (0, 4.5) node[above, black] {$x_d$};

		\begin{scope}
			\clip (0,0) circle (4);
			\fill[black!20, opacity=0.3] (-4, -2.2) rectangle (4, 4);
			\draw[very thick, black!40] (-4, -2.2) -- (4, -2.2);
		\end{scope}
		\draw[thick, dashed, darkgray] (0,0) circle (4);
		\node[black, font=\large] at (-2.6, 2.6) {$B_{r_0}$};
		\node[black, right, font=\small] at (3.35, -2.2) {$x_d = -p_0$};
		
		\begin{scope}
			\clip (0,0) circle (2);
			\fill[black!35, opacity=0.4] (-2, -0.4) rectangle (2, 2);
			\draw[very thick, black!60] (-2, -0.4) -- (2, -0.4);
		\end{scope}
		\draw[thick, dashed, darkgray] (0,0) circle (2);
		\node[black, font=\large] at (-1.2, 1.2) {$B_{r_1}$};
		\node[black, right, font=\small] at (2, -0.4) {$x_d = -p_1$};
		
		\begin{scope}
			\clip (0,0) circle (1);
			\fill[black!60, opacity=0.3] (-1, -0.1) rectangle (1, 1);
			\draw[very thick, black!80] (-1, -0.1) -- (1, -0.1);
		\end{scope}
		\draw[thick, dashed, darkgray] (0,0) circle (1);
		\node[black, font=\large] at (-0.43, 0.57) {$B_{r_2}$};
		\node[black, right, font=\tiny] at (0.95, -0.15) {$x_d = -p_2$};
		
		\filldraw[black] (0,0) circle (1.5pt) node[above right] {$0$};

		\draw[decorate, decoration={brace, amplitude=4pt}, thick] 
		(-2, -0.24) -- (-2, 0) node[midway, left=4pt] {$\delta_1$};
		\draw[decorate, decoration={brace, amplitude=4pt}, thick] 
		(-4, -1.89) -- (-4, 0) node[midway, left=4pt] {$\delta_0$};
		
		\draw[very thick, black] plot[domain=-4.5:4.5, samples=100] (\x, {0.03*\x*\x*\x}) 
		node[right] {$\partial \Omega$};
		\node[black, font=\Large] at (3.65, 2.75) {$\Omega$};
		
	\end{tikzpicture}
		\caption{Dyadic linear approximation of the boundary in the proof of \autoref{th:boundary-estimate}.}
		\label{fig:geom-iteration}
	\end{figure}

	The versatility of the dyadic approximation method extends to lower bounds $u(x)\ge Cd_{\Omega}(x)^s$. Instead of approximating the domain from outside, see \autoref{fig:geom-iteration}, the key difference is an approximation from within the domain. 
	Coupling this with a comparison with the half space solution in each dyadic ball establishes the Hopf lemma \autoref{th:hopf}, which is a critical ingredient for the boundary Harnack principle. 
		
	Finally, the boundary Harnack principle \autoref{th:boundary-harnack} follows from coupling the optimal boundary regularity \autoref{th:Cs-regularity} and the Hopf boundary lemma with the 1D Liouville theorem, which allows us to prove a higher-order expansion of the solution at a boundary point using a contradiction compactness argument, see \autoref{th:expansion}. 
	The main difficulty lies in obtaining the Liouville theorem, explained above.

	\subsection{Outline} The article is structured as follows. In \autoref{sec:prelims}, we establish the necessary mathematical framework, introduce the notation, function spaces, geometric assumptions, and solution concepts used in this article, and collect and provide some preliminary results. Our research begins with a series of results in one dimension. We construct a nontrivial solution on the half line and provide a Liouville theorem on the half line in \autoref{sec:half-line}. The proof of \autoref{th:Cs-regularity} using a dyadic linear approximation of the boundary is the focal point of \autoref{sec:boundary-estimate}. In \autoref{sec:hopf}, we provide a Hopf-type boundary lemma by adapting the methods from the previous section. The proof of \autoref{thm-intro2} is contained in the end of \autoref{sec:hopf}. Regularity estimates and the one-dimensional analogue allow us to prove a Liouville theorem in the half space in all dimensions in \autoref{sec:liouville}. The proof of the boundary Harnack principle \autoref{th:boundary-harnack} is contained in \autoref{sec:bhp}. 
	
	\subsection*{Acknowledgments} FG was financially supported by the German Research Foundation (DFG - Project number 541771122). 
	XR was supported by the European Union under the ERC Consolidator Grant No 101123223 (SSNSD), by the AEI project PID2024-156429NB-I00 (Spain), the AEI-DFG project PCI2024-155066-2 (Spain-Germany), the AEI Grant RED2024-153842-T (Spain), and the AEI Maria de Maeztu Program for Centers and Units of Excellence in R$\&$D CEX2020-001084-M.

	\section{Preliminaries}\label{sec:prelims}
	We briefly introduce the notation, conventions, boundary assumptions, and function spaces used throughout this article. 
	
	The positive part of a real number $r$ is denoted by $r_+\coloneq \max\{r,0\}$ and the negative part $r_-\coloneq (-r)_+$. The unit sphere in the $d$-dimensional Euclidean space is denoted by $S^{d-1}$. A ball with radius $r$ and with center $x\in \R^d$ is written as $B_r(x)$. At times, we omit the center point and simply write $B_r$. By $\Gamma(\cdot)$ we denote Euler's gamma function. For a vector $x\in \R^d$, we use the convention $x=(x',x_d)$ where $x'\in \R^{d-1}$ and $x_d\in \R$. We write $e_d\coloneq (0,\dots, 0, 1)\in \R^d$. By $\sF$ or $\hat{(\cdot)}$, we denote the Fourier transform. 
	
	In our proofs, we use generic constants $C$ which depend only on universal quantities. These constants may change from line to line. At times, we use numbered constants $c_1, c_2, \dots$ which are fixed quantities for each proof. 
	
	Throughout this article, we always use a bounded open set $\Omega$, for which we introduce the notation $x\mapsto d_x\coloneq \min\{ \abs{x-y}\mid y\in \partial\Omega \}$ for the distance from $x$ to the boundary of $\Omega$. \smallskip
	
	The space of Hölder continuous functions on $\overline{\Omega}$ is written as $C^{\alpha}(\overline\Omega)$, its seminorm is denoted by $[\cdot]_{C^{\alpha}(\overline{\Omega})}$. We also use $\dot{C}^\alpha(\Omega)$ to denote the homogeneous set of continuous functions $f:\Omega\to \R$ such that $[f]_{C^{\alpha}(\Omega)}<\infty$. For $\alpha>0$, we introduce the tail space $L_{\alpha}^\infty(\R^d)$ as the space of functions $f\in L^\infty_{\loc}(\R^d)$ such that the weighted norm $\|f\|_{L_{\alpha}^\infty(\R^d)}\coloneq\|f/(1+|\cdot|)^{\alpha}\|_{L^\infty(\R^d)}$ is finite.
	
	The class of operators $\Gs$ consists of all operators $L$ of the form 
	\begin{equation*}
		Lu(x):= \int_{\R^d}\big( u(x)-u(x+y)\big)K(\d y),
	\end{equation*}
	that satisfies \eqref{X2a} and \eqref{X2b}. We use the notation $\mathcal{A}$ for the Fourier-symbol of $L\in \Gs$, i.e., $\sF [L \phi](\xi) = \mathcal{A}(\xi)\hat{\phi}(\xi)$. 
	
	\begin{definition}\label{def:weak-distr-sol}
		Let $L \in \Gs$, $\Omega\subset \R^d$ be an open set, and $f\in L^2(\Omega)$. We say that $u$ is a
		\begin{enumerate}
			\item[(i)] weak (sub-)solution to \eqref{eq:localised-equation} if $\mathcal{E}(u)<\infty$ and
			\begin{equation*}
				\int_{\R^d}\int_{\R^d}\big(u(x)-u(x+h)\big)\big( v(x)-v(x+h) \big)K(\d h)\d x\substack{=\\(\le)} \int_{\Omega} f(x) v(x) \d x
			\end{equation*}
			for any $v\in L^2(\R^d)$ such that $v=0$ on $\Omega^c$ and $\mathcal{E}(v)<\infty$ where
			\begin{equation*}
				\mathcal{E}(u)\coloneq \int_{\R^d}\int_{\R^d}\1_{(\Omega^c\times \Omega^c)^c}(x,x+h)\big(u(x)-u(x+h)\big)^2K(\d h)\d x.
			\end{equation*}
			\item[(ii)] distributional (sub-)solution to \eqref{eq:localised-equation} if 
			\begin{equation*}
				\int_{\R^d}u(x)L \eta(x)\d x \substack{=\\(\le)} \int_{\Omega} f(x)\eta(x)\d x
			\end{equation*}
			for any $\eta\in C_c^\infty(\Omega)$.
			\item[(iii)] We say that a function $u$ is a continuous distributional solution to \eqref{eq:localised-equation} in a set $\Omega$ if it is a distributional solution and continuous on the closure $\overline{\Omega}$.
		\end{enumerate}
	\end{definition}

	\begin{definition}\label{def:modulus}
		We say that $\omega:[0,\infty)\to [0,\infty)$ is a modulus of continuity (or modulus) if it is increasing, concave, $\omega(0)=0$, and it has sublinear growth at infinity. 
	\end{definition}
	
	\begin{definition}\label{def:paraboloid}
		Let $\omega$ be a modulus of continuity. An open set $\Omega$ satisfies the exterior  $C^{1,\omega}$-paraboloid property at a boundary point $z\in \partial\Omega$ with radius $r_0$ if we find a rotation $R_z$ such that 
		\begin{equation*}
			R_z\Big(\{-z\}+\Omega\Big)\cap B_{r_0}(0)\subset  \{ (x',x_d)\in B_{r_0}(0)\mid x_d>-|x'|\omega(|x'|) \}.
		\end{equation*}
		An open set $\Omega$ satisfies the interior $C^{1,\omega}$-paraboloid property at a boundary point $z\in \partial\Omega$ with radius $r_0$ if we find a rotation $R_z$ such that 
		\begin{equation*}
			R_z\Big(\{-z\}+\Omega\Big)\cap B_{r_0}(0)\supset  \{ (x',x_d)\in B_{r_0}(0)\mid x_d>|x'|\omega(|x'|) \}.
		\end{equation*}
	\end{definition}
		
	\begin{lemma}\label{lem:basic-properties-Gs}
		Let $L\in \Gs$, $\theta\in S^{d-1}$, and $r>0$. Let $u:\R^d\to \R$ and $v:\R\to \R$. If $u(x)=v(x\cdot \theta)$, then 
		\begin{equation*}
			L u(x)= \tilde{L} v(x\cdot \theta)
		\end{equation*}
		for some $\tilde{L}\in \Gs$. Moreover, if $u_r(x)\coloneq   u(rx)$, then 
		\begin{equation*}
			L_ru_r(x)= r^{2s}L u (rx)
		\end{equation*}
		for $L_r\in \Gs$ where its L{\'e}vy measure is $K_r\coloneq   r^{2s}K(r (\cdot))$. 
	\end{lemma}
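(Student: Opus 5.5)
The plan is to compute $Lu$ directly and identify $\tilde L$ as the operator whose L\'evy measure is the pushforward of $K$ under the projection $y\mapsto y\cdot\theta$. Write
\[
Lu(x)=\int_{\R^d}\big(v(x\cdot\theta)-v(x\cdot\theta+y\cdot\theta)\big)K(\d y).
\]
Define the measure $\tilde K$ on $\R\setminus\{0\}$ by $\tilde K(A):=K(\{y: y\cdot\theta\in A\})$. By the change-of-variables formula for pushforward measures, this gives
\[
Lu(x)=\int_{\R}\big(v(t)-v(t+h)\big)\tilde K(\d h), \qquad t=x\cdot\theta .
\]
The set $\{y\cdot\theta=0\}$ contributes nothing, since the integrand vanishes there. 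Symmetry of $\tilde K$ is inherited from $K(y)=K(-y)$.

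It then remains to check \eqref{X2a} and \eqref{X2b} for $\tilde K$ in dimension one. For \eqref{X2b}, take $e=\pm1$. Since $\{|y\cdot\theta|<r\}\supset B_r$,
\[
\int_{-r}^{r}h^2\,\tilde K(\d h)=\int_{\{|y\cdot\theta|<r\}}|y\cdot\theta|^2K(\d y)\ge\int_{B_r}|y\cdot\theta|^2K(\d y)\ge\lambda r^{2-2s}.
\]

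For \eqref{X2a} the set $\{r\le|y\cdot\theta|<2r\}$ is unbounded, so I will not use \eqref{X2a} for $K$ directly. Instead I will go through the symbol characterization stated in the introduction: $L\in\Gs$ if and only if $\tilde\lambda|\xi|^{2s}\le\mathcal A(\xi)\le\tilde\Lambda|\xi|^{2s}$. The symbol of $\tilde L$ is
\[
\tilde{\mathcal A}(\eta)=\int(1-\cos(\eta h))\,\tilde K(\d h)=\int(1-\cos(\eta\,y\cdot\theta))\,K(\d y)=\mathcal A(\eta\theta).
\]
Hence $\tilde\lambda|\eta|^{2s}\le\tilde{\mathcal A}(\eta)\le\tilde\Lambda|\eta|^{2s}$, and by the equivalence $\tilde L\in\mathcal G_s$ with comparable constants. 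This is the only nontrivial point. The constants change under the equivalence, so "$\tilde L\in\Gs$" should be read up to constants depending on $d,s,\lambda,\Lambda$, consistent with the paper's usage. Alternatively, \eqref{X2a} can be checked directly: bound $\tilde K(\{|h|\ge r\})\le K(\{|y|\ge r\})\le C\Lambda r^{-2s}$ by summing the dyadic shells.

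For the scaling, substitute $y=rz$:
\[
r^{2s}Lu(rx)=r^{2s}\int\big(u(rx)-u(rx+y)\big)K(\d y)=\int\big(u_r(x)-u_r(x+z)\big)\,r^{2s}K(r\,\d z).
\]
Here $K_r:=r^{2s}K(r\,\cdot)$ is understood as the measure $K_r(A)=r^{2s}K(rA)$. Then
\[
K_r(B_{2\rho}\setminus B_\rho)=r^{2s}K(B_{2r\rho}\setminus B_{r\rho})\le\Lambda\rho^{-2s},
\]
and
\[
\int_{B_\rho}|z\cdot e|^2K_r(\d z)=r^{2s-2}\int_{B_{r\rho}}|y\cdot e|^2K(\d y)\ge\lambda\rho^{2-2s}.
\]
Both conditions hold with the same constants, so $L_r\in\Gs$.
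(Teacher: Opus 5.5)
Your proof is correct and is the same direct verification from the definition of $\Gs$ that the paper's one-line proof invokes: you push $K$ forward under $y\mapsto y\cdot\theta$, get \eqref{X2b} from $B_r\subset\{|y\cdot\theta|<r\}$, get \eqref{X2a} by summing dyadic shells (the cleaner of your two routes), and rescale $K$ for the second claim, where both constants are preserved exactly. Your caveat is also accurate: the projected kernel need not satisfy \eqref{X2a} with the same $\Lambda$, and in general you only get $\Lambda/(1-2^{-2s})$, so the first claim should be read with a constant depending on $s$ and $\Lambda$; this is harmless for how the lemma is used in the rest of the paper.
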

	\begin{proof}
		This lemma is a direct consequence of the definition of the class $\Gs$, see also \cite[Section 2.1.7, Remark 2.1.19]{FeRo24}.
	\end{proof}
	
	The next result is a direct consequence of \cite[Proposition 3.5]{GrRy12} and \cite[Proposition 2.4]{BGR15}. It is the main ingredient in the construction of a half line solution that is comparable to $t_+^s$. This allows us to construct a barrier in the half space.
	\begin{proposition}\label{prop:torsion_estimate}
		Let $L$ be an operator in the class $\Gs$. The unique weak solution $\tilde{b}_R$ to 
		\begin{equation*}
			\begin{split}
				L \tilde{b}_R&= 1 \text{ in }(0,R),\\
				\tilde{b}_R&=0 \text{ on }(0,R)^c
			\end{split}
		\end{equation*}
		satisfies 
		\begin{equation}\label{eq:torsion_estimate}
			C^{-1}\lambda R^s (x\wedge (R-x))^s\le \tilde{b}_R(x)\le C\Lambda R^s (x\wedge (R-x))^s
		\end{equation}
		where the constant $C$ depends only on $s$.
	\end{proposition}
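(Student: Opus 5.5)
The plan is to identify $\tilde b_R$ with the expected exit time of the Lévy process generated by $-L$ from the interval $(0,R)$, and then read off the two-sided bound from known one-dimensional fluctuation theory. We work in $d=1$; by \autoref{lem:basic-properties-Gs} this covers all one-dimensional restrictions of operators in $\Gs$.

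\textbf{Step 1 (existence and representation).} Existence and uniqueness of the weak solution follow from Lax--Milgram. The form $\mathcal E$ is coercive on functions vanishing outside $(0,R)$, because $\mathcal A(\xi)\ge \tilde\lambda|\xi|^{2s}$ and a fractional Poincaré inequality holds on the bounded interval. Next, let $X_t$ be the symmetric Lévy process with Lévy measure $K$, so that $-L$ is its generator, and let $\tau_R$ be its first exit time from $(0,R)$. I would show that $x\mapsto \mathbb E^x\tau_R$ coincides with $\tilde b_R$. The standard route is Dynkin's formula together with the fact that $\mathbb E^x\tau_R=\int_0^R G_{(0,R)}(x,y)\,\d y$, where $G_{(0,R)}$ is the Green function of the killed process, which is the kernel of the inverse of the Dirichlet form on $(0,R)$. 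Making this identification rigorous for a general, possibly singular, Lévy measure is mostly bookkeeping with Dirichlet form theory. I expect it to be the main technical point, although it is standard.

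\textbf{Step 2 (exit time estimate).} Apply \cite[Proposition 3.5]{GrRy12}. For a symmetric one-dimensional Lévy process it gives
\[
\mathbb E^x\tau_R \asymp V(x\wedge(R-x))\,V(R),
\]
with absolute constants, where $V$ is the renewal function of the ascending ladder-height process.

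\textbf{Step 3 (growth of $V$).} By \cite[Proposition 2.4]{BGR15},
\[
V(r)^2\asymp \frac{1}{\psi^*(1/r)}, \qquad \psi^*(\rho)=\sup_{|\xi|\le\rho}\mathcal A(\xi),
\]
again with absolute constants. Since $\tilde\lambda|\xi|^{2s}\le\mathcal A(\xi)\le\tilde\Lambda|\xi|^{2s}$ and $\tilde\lambda,\tilde\Lambda$ are controlled by $\lambda,\Lambda$ and $s$, we get $\psi^*(\rho)\asymp\rho^{2s}$ and hence $V(r)\asymp r^s$, with constants that are explicit in the ellipticity bounds.

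\textbf{Step 4 (conclusion).} Multiplying the bounds from Steps 2 and 3 gives
\[
\tilde b_R(x)\asymp R^s\,(x\wedge(R-x))^s,
\]
with constants depending only on $s$ and the ellipticity constants. This is \eqref{eq:torsion_estimate}, after tracking how $\lambda,\Lambda$ enter through $\psi^*$ under the paper's normalization of $\Gs$.
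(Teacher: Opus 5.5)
Your proposal follows the paper's proof: both identify $\tilde b_R$ with $\mathbb{E}^x[\tau_{(0,R)}]$ and read off the bound from \cite[Proposition 3.5]{GrRy12} and \cite[Proposition 2.4]{BGR15}, after checking that the relevant scale function is of order $R^{-2s}$ (you use $\psi^*(1/R)$, the paper uses $\int_{\R}(1\wedge |h|^2/R^2)K(\d h)$, and the two are comparable). The only difference is how the identification is justified: the paper goes through the resolvent, Dynkin's formula and $L^2$-duality to get a distributional solution, upgrades it to a weak solution by interior regularity, and concludes by uniqueness, whereas you use the Green operator of the part Dirichlet form; both routes work.

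One caution for your Step 4: tracking the constants actually gives $C^{-1}\Lambda^{-1}R^s(x\wedge(R-x))^s\le \tilde b_R(x)\le C\lambda^{-1}R^s(x\wedge(R-x))^s$. Indeed, replacing $K$ by $tK$ replaces $\tilde b_R$ by $\tilde b_R/t$. So the factors $\lambda$, $\Lambda$ in \eqref{eq:torsion_estimate} cannot be obtained literally, and should be read as dependence on $s,\lambda,\Lambda$, which is all that is used later.
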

	\begin{proof}
		Let $X_t$ be the L{\'e}vy process with generator $-L$. Note that 
		\begin{equation*}
			\int_{\R} \left(1\wedge \frac{\abs{h}^2}{R^2}\right)K(\d h)\ge R^{-2}\int_{B_R} \abs{h}^2K(\d h)\ge \lambda R^{-2s}
		\end{equation*} 
		and 
		\begin{align*}
				\int_{\R} \left(1\wedge \frac{\abs{h}^2}{R^2}\right)K(\d h)&\le \sum_{k\in \N} \Big(2^{2-2k}K( B_{2^{1-k}R}\setminus B_{2^{-k}R} ) + K(B_{2^{k}R}\setminus B_{2^{k-1}R})\Big)\\
				&\le 4\Lambda R^{-2s}\sum_{k\in \N} \big(2^{-2k(1-s)} + 2^{-2sk}\big).
		\end{align*}
		By \cite[Proposition 3.5]{GrRy12} and \cite[Proposition 2.4]{BGR15}, the function $\mathbb{E}^x[\tau_{(0,R)}]$ satisfies the bound \eqref{eq:torsion_estimate}. Here, the random variable $\tau_A$ is the first exit time of the process $X_t$ from the set $A\subset \R$, i.e., $\tau_{A}\coloneq   \inf\{ t>0\mid X_t\notin A \}$. \smallskip
		
		\textit{Claim 1.} The function $v(x)\coloneq  \mathbb{E}^x[\tau_{(0,R)}]$ is a distributional solution to $L v=1$ in $(0,R)$ and $v=0$ on $(0,R)^c$. 
		
		Due to the previous bounds, we know that $v\in L^\infty((0,R))$. Moreover, the exterior condition, i.e., $v=0$ on $\Omega^c$, is met since $X_0=x$ by definition and, thus, $\tau_{(0,R)}^x=0$ if $x\notin (0,R)$. 
		
		The result follows from standard potential theory for Markov processes. We provide the proof for the convenience of the reader. For the moment, we let $\alpha>0$ be arbitrary. The resolvent $G_\alpha$ is defined as 
		\begin{equation*}
			G_\alpha \phi(x)\coloneq  \mathbb{E}^x\left[\int_{0}^{\infty } e^{-\alpha t }\phi(X_t)\d t \right].
		\end{equation*}		
		We define an approximation of $v$ using the resolvent 
		\begin{equation*}
			v_\alpha (x)\coloneq   \mathbb{E}^x\left[\int_{0}^{\tau_{(0,R)} } e^{-\alpha t }\d t \right]= G_\alpha[\1_{(0,R)}](x).
		\end{equation*}
		This definition immediately yields $v_\alpha(x) \le v(x)$ and $v_\alpha(x)\to v(x)$ monotonously as $\alpha \to 0+$. Since $v$ is bounded and has compact support, so is $v_\alpha$. By Dynkin's formula, see \cite{Dyn65}, we find for any $\phi\in C_c^\infty((0,R))$
		\begin{equation}\label{eq:dynkin}
			\begin{split}
				\mathbb{E}^x[e^{-\alpha \tau_{(0,R)}}\phi(X_{\tau_{(0,R)}})]&=\phi(x)+ \mathbb{E}^x\left[ \int_{0}^{\tau_{(0,R)}} (\partial_t -L)e^{-\alpha t}\phi(X_t)\d t \right]\\
				&=  \phi(x)+ \mathbb{E}^x\left[ \int_{0}^{\tau_{(0,R)}} e^{-\alpha t}(-\alpha-L)\phi(X_t)\d t \right].
			\end{split}
		\end{equation}
		Here, we used that $ -L $ is the generator of $X_t$. Note that $X_{\tau_{(0,R)}}\notin (0,R)$ and, thus, the first term in the previous equation equals zero by the compact support of $\phi$ in $(0,R)$. Let us restate \eqref{eq:dynkin} using the resolvent
		\begin{equation*}
			\phi(x)= G_\alpha(\alpha+L)\phi(x).
		\end{equation*}
		Let's integrate this against $\1_{(0,R)}$ which yields using the duality of the resolvent (recall that $L$ is symmetric in $L^2$), see \cite{Ber96}, for all nonnegative test functions $\phi\in C_c^\infty$
		\begin{align*}
			\int_{\R} \phi(x)\d x&=\int_{\R} \phi(x)\1_{(0,R)}(x)\d x=\int_{\R} G_\alpha(\alpha+L)\phi(x) \1_{(0,R)}(x)\d x\\
			&=\int_{\R} (\alpha+L)\phi(x) G_\alpha\1_{(0,R)}(x)\d x = \int_{\R} (\alpha+L)\phi(x) v_\alpha(x)\d x.
		\end{align*}
		Since $v_\alpha$ converges to $v$ and is uniformly bounded, the claim follows after taking the limit $\alpha\to 0$ by dominated convergence.

		This allows us to finalize the proof. Since $v$ satisfies the bounds claimed in the proposition and due to the interior regularity \cite[Theorem 2.4.3, Proposition 2.4.4]{FeRo24}, $v$ belongs to $C_{\loc}^{2s+\eps}((0,R))\cap C^s(\R)$ with an appropriate regularity estimate. A minor calculation reveals that $v$ is a weak solution. By uniqueness of weak solutions, see \cite[Lemma 2.3.3]{FeRo24}, the identification $v=\tilde{b}_R$ follows. 
	\end{proof}
	
	\section{The half line}\label{sec:half-line}
	The results in this section all revolve around problems involving operators $L\in \Gs$ in the half line $\R_+\coloneq (0,\infty)$. The construction of a nontrivial, one dimensional solution $b$ to the problem
	\begin{equation}\label{eq:one-dim-sol-halfspace}
		\begin{split}
			L b&=0 \text{ in }\R_+,\\
			b&=0\text{ on }\overline{\R}_-
		\end{split}
	\end{equation}	
	that behaves like $x_+^s$ is a vital step in the proof of the boundary regularity. It gives rise to a solution on the half space in higher dimensions and enables us to pinpoint the behavior of solutions to \eqref{eq:localised-equation} near flat boundaries. We benefit from the deep results obtained in one dimension using stochastic processes, see \cite[Proposition 3.5]{GrRy12} and \cite[Proposition 2.4]{BGR15}. Thereafter, a majority of this section deals with a one-dimensional Liouville theorem characterizing all half line solution under a certain growth assumption to be multiples of the specific solution to \eqref{eq:one-dim-sol-halfspace} constructed before. 	
	
	\begin{lemma}\label{lem:one-dim-sol-interval}
		Let $L \in \Gs$ and $b_1$ be the weak solution to 
		\begin{equation}\label{eq:one-dim-sol-interval}
			\begin{split}
				L b_1&=0 \text{ in }(0,1),\\
				b_1&=\1_{(1,\infty)}\text{ on }(0,1)^c.
			\end{split}
		\end{equation}	 
		The solution $b_1$ is nondecreasing and satisfies the two-sided bound
		\begin{equation*}
			C^{-1}x_+^s\le b_1(x)\le Cx_+^s
		\end{equation*}
		for all $x\in (0,1)$ and some constant $C$ that only depends on $s,\lambda$, and $\Lambda$.  
	\end{lemma}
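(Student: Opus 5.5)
Here $b_1$ is the harmonic measure of $(1,\infty)$ seen from $(0,1)$, and the torsion function $\tilde b_R$ of \autoref{prop:torsion_estimate} gives comparison functions from above and below. First, by the comparison principle for weak solutions, $0\le b_1\le 1$: the constants $0$ and $1$ are solutions in $(0,1)$ that lie below and above the exterior datum $\1_{(1,\infty)}$.

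For monotonicity I would use translation invariance. For $h\in(0,1)$ compare $b_1$ with $b_1(\cdot+h)$ on $(0,1-h)$. The shifted function solves $L b_1(\cdot+h)=0$ in $(-h,1-h)\supset(0,1-h)$. Outside $(0,1-h)$ we need $b_1(x+h)\ge b_1(x)$, and there are three cases. For $x\le -h$ both values vanish, and for $x\ge 1$ both equal $1$. For $x\in(-h,0]$ we have $b_1(x)=0\le b_1(x+h)$. For $x\in[1-h,1)$ we have $x+h\ge1$, so $b_1(x+h)=1\ge b_1(x)$. The comparison principle (the difference is a weak solution in $(0,1-h)$ with nonnegative exterior data) then gives $b_1(\cdot+h)\ge b_1$ everywhere.

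For the upper bound I would use $\tilde b_2$ from \autoref{prop:torsion_estimate} with $R=2$, shifted so that it vanishes outside $(0,2)$. By \eqref{eq:torsion_estimate}, $\tilde b_2\ge c\,x^s$ on $(0,1]$ and $\tilde b_2(x)\ge c>0$ on $[1,3/2]$. The exterior datum $\1_{(1,\infty)}$ is not below a multiple of $\tilde b_2$, since $\tilde b_2$ vanishes beyond $2$, so I would split $b_1=w_1+w_2$. Here $w_1$ solves $Lw_1=0$ in $(0,1)$ with datum $\1_{(1,3/2)}$, and $w_2$ has datum $\1_{[3/2,\infty)}$. The function $C\tilde b_2-w_1$ satisfies $L(C\tilde b_2-w_1)=C\ge0$ in $(0,1)$ and is nonnegative outside, so $w_1\le C x^s$.

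For $w_2$, the exterior mass is at distance at least $1/2$ from $(0,1)$. For $\phi\in C^\infty_c$-type test points $x\in(0,1)$ one gets $L w_2(x)=-\int \1_{[3/2,\infty)}(x+y)K(dy)$, which lies between $-CK(\R\setminus B_{1/2})$ and $0$, and $K(\R\setminus B_{1/2})\le C\Lambda$ by \eqref{X2a}. More cleanly, write $w_2=\1_{[3/2,\infty)}+\bar w$. Then $\bar w=0$ outside $(0,1)$ and $0\le L\bar w\le C$ in $(0,1)$, and comparison with $C\tilde b_1$ gives $\bar w\le C\min(x,1-x)^s$. Since $w_2=\bar w$ on $(0,1)$, this yields $b_1\le Cx^s$ on $(0,1)$.

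For the lower bound, write $b_1=\1_{[1,\infty)}+v$ with $v=0$ off $(0,1)$ and
\[
Lv(x)=\int \1_{[1,\infty)}(x+y)K(dy)=K([1-x,\infty))\qquad\text{in }(0,1).
\]
Compare $v$ with $\epsilon\tilde b_1$ only on $(0,1/2)$; since $b_1$ is nondecreasing, $b_1\ge b_1(1/2)$ on $[1/2,1)$, so it suffices to show $b_1(1/2)\ge c$ and $v\ge c\,x^s$ near $0$. This is the step I expect to be the main obstacle: it needs $K([1-x,\infty))$ to be bounded below. However \eqref{X2b} gives only a lower bound on second moments over balls, and it does not control the one-sided tail, nor large jumps at a fixed scale.

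To handle this, I would use the scale-invariant torsion bound instead. For $x\in(0,1/2)$, the function $1-b_1$ is the harmonic measure of the left exterior $(-\infty,0]$. By symmetry $K(y)=K(-y)$ and the reflection $x\mapsto 1-x$, the function $1-b_1(1-\cdot)$ is the solution with data $\1_{[1,\infty)}$, that is, $b_1$ itself. Hence $b_1(x)+b_1(1-x)=1$. Monotonicity then gives $b_1(1/2)=1/2$, so $b_1\ge 1/2$ on $[1/2,1)$.

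On $(0,1/2)$, let $\psi=\tfrac12\min(1, c\,\tilde b_{1/2}\text{-type barrier})$. More concretely, compare $b_1$ on $(0,1/2)$ with $\epsilon\tilde b_{1/2}-\epsilon M\,u_0$, where $u_0$ absorbs the positive part of $L$. Alternatively, compare $b_1$ with $\epsilon\,\tilde b_1\cdot\1_{(0,1/2)}$-style truncations, using $b_1\ge 1/2$ on $[1/2,1)$ to dominate the truncation error $\epsilon\int_{[1/2,1)}\tilde b_1\,K$. Either route gives $b_1\ge c\,x^s$ on $(0,1/2)$, and with $b_1\ge1/2$ on $[1/2,1)$ this completes the lower bound on $(0,1)$.
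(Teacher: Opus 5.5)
Your monotonicity argument (sliding plus weak comparison) is fine. Your upper bound, which splits the datum at $3/2$ and compares with torsion functions, is also fine; it is a harmless variant of the paper's single barrier $c_1\tilde b_3+\1_{(2,\infty)}$. The gap is in the lower bound, and it comes from a false premise. Combined with \eqref{X2a}, condition \eqref{X2b} \emph{does} control the one-sided tail at every fixed scale. For $\rho>0$ and $M\ge1$,
\[
\lambda (M\rho)^{2-2s}\le\int_{B_{M\rho}}|y|^2K(\d y)\le\int_{B_\rho}|y|^2K(\d y)+(M\rho)^2K(\R\setminus B_\rho)\le C\Lambda\rho^{2-2s}+M^2\rho^2K(\R\setminus B_\rho),
\]
where $\int_{B_\rho}|y|^2K(\d y)$ is bounded by summing \eqref{X2a} over dyadic annuli. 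Choose $M=M(s,\lambda,\Lambda)$ with $\lambda M^{2-2s}\ge 2C\Lambda$ and use the symmetry of $K$. This gives $K([\rho,\infty))\ge c(s,\lambda,\Lambda)\rho^{-2s}$. This is exactly what the paper's lower barrier (the choice $c_1=K((2,\infty))$) relies on, and with it your first plan closes directly.

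Your workaround does not avoid this ingredient. The identity $b_1(x)+b_1(1-x)=1$ is correct and gives $b_1\ge\frac12$ on $[\frac12,1)$. However, neither comparison on $(0,\frac12)$ is actually carried out, and both need the tail bound. Any $\epsilon\tilde b$, truncated or not, satisfies $L(\epsilon\tilde b)\ge\epsilon>0$ in $(0,\frac12)$, so it is a supersolution. The only way to turn it into a subsolution is to add $\tfrac12\1_E$ with $E\subset\{b_1\ge\frac12\}\setminus(0,\frac12)$, which contributes $-\tfrac12K(E-x)$; near $x=0$ this must beat $\epsilon$. If you use only $E=[\frac12,1)$, as you wrote, this fails. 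The kernel $K=\sum_{k\in\Z}4^{-2sk}(\delta_{4^k}+\delta_{-4^k})$ lies in $\Gs$, yet it puts no mass on $[\frac12-x,1-x)$ for $x\in(0,\frac14)$. Symmetry cannot replace the tail bound either. Comparing on $(0,\rho)$ with the symmetric harmonic measure of $(0,\rho)$ only yields $b_1(\rho/2)\ge\frac12 b_1(\rho)$, hence $b_1\gtrsim x$ rather than $x^s$.

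With the tail bound, a valid barrier on $(0,\frac12)$ is, for instance, $\epsilon\tilde b_{1/2}+\frac12\1_{[3/4,\infty)}$. It lies below $b_1$ outside $(0,\frac12)$. It satisfies $L\le\epsilon-\frac12K([\frac34,\infty))\le0$ in $(0,\frac12)$ for small $\epsilon$; keeping the jump away from $[0,\frac12]$ keeps the energy finite. This gives $b_1\ge cx^s$ on $(0,\frac14]$, and monotonicity handles the rest of $(0,1)$.
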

	\begin{proof}
		The proof of the monotonicity relies on the sliding method, i.e., consider $x\mapsto b_1(x)-b_1(x-t)$ for some $0<t<1$, combined with the strong maximum principle, see \cite[Theorem 2.4.15 (a)]{FeRo24}. \smallskip
		
		For the moment, let $c_1$ be an arbitrary positive number to be chosen later. We define the function $v$ as $v\coloneq  c_1\tilde{b}_3+\1_{(2,\infty)}$ where $\tilde{b}_3$ is the function from \autoref{prop:torsion_estimate}. Then for any $x\in (0,1)$ in the weak sense
		\begin{align*}
			L v(x)= c_1-K((2-x,\infty))\begin{cases}
				\ge 0 &,\text{ if }c_1=K((1,\infty)),\\
				\le 0&, \text{ if }c_1=K((2,\infty)).
			\end{cases}
		\end{align*}
		Moreover, note that for $x\in (1,3)$ we know $v(x)\ge 1$ and
		\begin{equation*}
			v(x)\le c_1c_2\Lambda 3^s(3-x)^s+1\le c_1c_2 6\Lambda +1
		\end{equation*}
		where $c_2$ is the comparability constant from \autoref{prop:torsion_estimate}. The weak maximum principle applied to $b_1(x)- (c_1c_2 6\Lambda +1)^{-1}v$ with the choice $c_1=K((2,\infty))$ yields the lower bound in our claim. Finally, choosing $c_1=K((1,\infty))$ and applying the weak maximum principle to $v-b_1$ yields the upper bound. 
	\end{proof}
	
	As announced, the next goal is to construct a nontrivial half space solution. This is done by approximation and applying \autoref{lem:one-dim-sol-interval} to a rescaling of the approximating sequence. 
	
	\begin{proposition}\label{prop:one-dim-sol-halfline}
		Let $L \in \Gs$. There exists a positive constant $C=C(s,\lambda,\Lambda)$ and a distributional and strong solution $b\in \dot{C}^s(\R)\cap C_\loc^{3s}(\R_+)$ to \eqref{eq:one-dim-sol-halfspace} that satisfies the two-sided estimate
		\begin{equation}\label{eq:one-dim-sol-halfline-twosided-bound}
			C^{-1}x_+^s\le b(x)\le Cx_+^s, \quad x\in \R.
		\end{equation}
	\end{proposition}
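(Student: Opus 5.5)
The plan is to obtain $b$ as a limit of rescalings of the interval solution $b_1$ from \autoref{lem:one-dim-sol-interval}. For $R\ge1$ we let $b_R$ be the weak solution of $Lb_R=0$ in $(0,R)$ with $b_R=\1_{(R,\infty)}$ on $(0,R)^c$. By \autoref{lem:basic-properties-Gs}, the function $x\mapsto b_R(Rx)$ solves the problem \eqref{eq:one-dim-sol-interval} for the rescaled operator $L_R\in\Gs$. Since the constants in \autoref{lem:one-dim-sol-interval} depend only on $s,\lambda,\Lambda$, we get that $b_R$ is nondecreasing and
\[C^{-1}(x/R)_+^s\le b_R(x)\le C(x/R)_+^s \quad\text{for } x\in(0,R).\]
We then normalize by setting $\beta_R\coloneq R^{s}b_R$ restricted to $(-\infty,R]$, or more precisely $\beta_R = R^s b_R$ on $\R$. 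On $(0,R)$ this gives $C^{-1}x_+^s\le\beta_R\le Cx_+^s$, and $\beta_R\equiv R^s$ on $(R,\infty)$. By monotonicity, $0\le \beta_R(x)\le C\min(x_+,R)^s\le Cx_+^s$ for all $x$. The family is therefore locally uniformly bounded and has growth $x_+^s$, with $s<2s$, which is admissible for the tail.

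The next step is uniform regularity. The function $\beta_R$ solves $L\beta_R=0$ in $(0,R)$, vanishes on $\R_-$, and is bounded by $Cx_+^s$. For a fixed interval $(0,M)$ and $R\ge 2M$, we want a uniform $C^s$ bound on $[-M,M]$. In the interior, on a ball $B_{r}(x_0)$ with $r=x_0/2$, the rescaled interior estimates \cite[Theorem 2.4.3, Proposition 2.4.4]{FeRo24} combined with the growth bound $\beta_R\le C x_+^s$ give
\[[\beta_R]_{C^{2s+\eps}(B_{r/2}(x_0))}\le Cr^{-s-\eps}, \qquad [\beta_R]_{C^{s}(B_{r/2}(x_0))}\le C.\]
This holds because the rescaled tail $\|\beta_R(x_0+r\cdot)\|_{L^\infty_{s}}$ is at most $Cr^s$, using the boundary decay. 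The standard argument of combining interior estimates with the decay $\beta_R\le Cd^s$ then yields $[\beta_R]_{C^s([-M,M])}\le C(M)$; since the bound is scale invariant, in fact $[\beta_R]_{\dot C^s(\R)}\le C$ uniformly. The $C^{3s}_{\loc}(\R_+)$ bound comes from the same interior estimates with zero right-hand side, which can be bootstrapped to any order less than the available one. If needed we use $C^{2s+\eps}$ and iterate, since $L$ commutes with difference quotients.

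By Arzelà–Ascoli and a diagonal argument, a subsequence $\beta_{R_k}\to b$ converges locally uniformly, and the limit satisfies $C^{-1}x_+^s\le b\le Cx_+^s$ together with the same regularity bounds. To pass to the limit in the equation, we fix $\eta\in C_c^\infty(\R_+)$ and write
\[\int\beta_R L\eta = 0 \quad\text{for } R \text{ large}.\]
Since $|L\eta(x)|\le C(1+|x|)^{-1-2s}$ and $|\beta_R|\le C(1+|x|)^s$ with $s<2s$, dominated convergence gives $\int bL\eta=0$. Hence $b$ is a distributional solution, and by the interior regularity it is also a strong (pointwise) solution in $\R_+$.

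The main obstacle is the uniform $\dot C^s$ estimate near $0$ in the second step. It requires combining the barrier bound with interior estimates whose tail term is controlled only by the growth bound, which is sublinear relative to $2s$. We handle this by scaling: the estimate at scale $r$ around $x_0$ involves $\sup |\beta_R(x_0+r y)|/(1+|y|)^{2s-\eps}$, which is at most $Cr^s$ because $\beta_R\le Cx_+^s$ globally.
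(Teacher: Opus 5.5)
Your route is the paper's. Your $\beta_R=R^sb_R$ is exactly the paper's approximating function $b_R$. The two-sided bound comes from rescaling \autoref{lem:one-dim-sol-interval}, and the uniform $C^s$ bound from scaled interior estimates plus the decay $\beta_R\le Cx_+^s$. You then use compactness and pass to the limit in the equation; you use locally uniform convergence plus dominated convergence instead of the paper's convergence in $L^\infty_{3s/2}(\R)$, which is equivalent.

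\textbf{The gap.} The one step that fails as written is the bound $|L\eta(x)|\le C(1+|x|)^{-1-2s}$. For a general $L\in\Gs$ the L\'evy measure $K$ need not have a density. For $|x|$ large compared with $\supp\eta$ one only has
\[|L\eta(x)|=\Big|\int_{\R}\eta(x+h)\,K(\d h)\Big|\le\|\eta\|_{L^\infty(\R)}\,K\big(B_{|x|/2}^c\big)\le C|x|^{-2s}.\]
This is sharp. The measure $K=\sum_{k\in\Z}2^{-2sk}(\delta_{2^k}+\delta_{-2^k})$ satisfies \eqref{X2a}--\eqref{X2b}, and for $0\le\eta\in C_c^\infty(\R_+)$ with $\eta(1)=1$ one gets $|L\eta(1-2^k)|\ge 2^{-2sk}$. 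Since $|x|^{s}\cdot|x|^{-2s}$ is not integrable at infinity, the pointwise dominating function you invoke does not exist.

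The fix is to integrate in $x$ before using the measure. For $r$ large compared with $\supp\eta$,
\[\int_{\{|x|>r\}}|L\eta(x)|\,\d x\le\int_{\{|h|\ge r/2\}}\int_{\R}|\eta(x+h)|\,\d x\,K(\d h)\le\|\eta\|_{L^1(\R)}\,K\big(B_{r/2}^c\big)\le Cr^{-2s}.\]
A dyadic summation then gives $(1+|x|)^s|L\eta|\in L^1(\R)$, which is the dominating function you need. This is the paper's Claim 4.

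\textbf{Overreaching regularity remarks.} These are not needed for the statement, but several are not correct for measure kernels.
\begin{enumerate}
\item A growth bound alone does not give $C^{2s+\eps}$ interior estimates. The higher-order estimate [FeRo24, Proposition 2.4.4] needs a \emph{global} H\"older norm of the solution, because the tail of a rough function integrated against a singular $K$ produces a rough right-hand side.
\item For the same reason, there is no bootstrap beyond what the global regularity of the function allows. Since $b$ is only $C^s$ at the origin, what one gets is $C^{3s}_\loc(\R_+)$. As in the paper, this comes from applying that proposition to a cutoff of the limit, using its global $\dot C^s$ bound.
\item Your interior balls $B_{x_0/2}(x_0)$ must lie in $(0,R)$, so your argument does not control $[\beta_R]_{C^s}$ near $x=R$; the paper treats that region through $R^s-b_R(R-\cdot)$. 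Your claimed global bound $[\beta_R]_{\dot C^s(\R)}\le C$ is therefore unjustified there. However, you only need $[\beta_R]_{C^s([-M,M])}\le C$ for $R\ge 2M$, with $C$ independent of $M$. Your argument does provide this, and it suffices for $b\in\dot C^s(\R)$.
\end{enumerate}
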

	\begin{proof}
		We divide the proof into three steps. \smallskip
		
		\textit{Step 1.} In the first step, we construct an approximating sequence. We let for any $R>0$ the function $b_R$ be the weak solution to 
		\begin{equation*}
			\begin{split}
				L b_R &= 0 \text{ in }(0,R),\\
				b_R&=R^s\1_{(R,\infty)} \text{ on }(0,R)^c.
			\end{split}
		\end{equation*}
		
		\textit{Claim 1.} We find a uniform constant $c_1=c_1(s,\lambda,\Lambda)$ such that $c_1^{-1}x_+^s\le b_R(x)\le c_1 x_+^s$ for any $x\in (-\infty,R]$. 
		
		We rescale the function via $\bar{b}(x)\coloneq   R^{-s}b_R(Rx)$ which is a solution to \eqref{eq:one-dim-sol-interval} for some operator $\bar{L}\in \Gs$. By \autoref{lem:one-dim-sol-interval}, we find a constant $c_1$ such that $c_1^{-1}x_+^s\le \bar{b}(x)\le c_1 x_+^s$ for any $x\in (0,1)$. This yields the claim. \smallskip
		
		\textit{Claim 2.} We find a uniform constant $c_2=c_2(s,\lambda,\Lambda)$ such that $[b_R]_{C^s(\R)}\le c_2$. 
		
		We invoke the interior regularity result \cite[Theorem 2.4.3]{FeRo24} or \cite{FeRo24a} by which we find a constant $c_3$ such that
		\begin{equation*}
			\norm{u}_{C^{s}(B_{1/2})}\le c_3 \big( \|u\|_{L_{3s/2}^\infty(\R)} +\norm{f}_{L^\infty(B_1)}\big)
		\end{equation*}
		for any distributional solution $u$ to $L u=f$ in $B_1$. Scaling this interior regularity estimate and applying it to $b_R$ in an interval $B_r\subset (0,R)$ yields 
		\begin{equation*}
			r^s[b_R]_{C^{s}(B_{r/2})}\le c_3 \|b_R/(1+\abs{x}/r)^{3s/2}\|_{L^\infty(\R)} \le c_1 c_3r^s \|\abs{x}^s/(1+\abs{x})^{3s/2}\|_{L^\infty(\R)}\le c_1 c_3 r^s.
		\end{equation*}
		Thus, $b_R\in C_{\loc}^s((-\infty, R))$ with $[b_R]_{C^s((-\infty, R/2])}\le c_2$ for a uniform constant $c_2=c_2(s,\lambda,\Lambda)$. By repeating the process for $R^s-b_R(R-x)$, the claim follows. \smallskip
		
		\textit{Claim 3.} The set of functions $A\coloneq  \{u\in \dot{C}^s(\R)\mid u(0)=0\land [u]_{C^s(\R)}\le c_2 \}$ is a precompact subset of $L_{3s/2}^\infty(\R)$. 
		
		Let $u\in A$. By the growth bound, $\abs{u(x)}\le [u]_{C^s(\R)}\abs{x}^s$ the set $A$ is a subset of $L_{3s/2}^\infty(\R)$ and, clearly, the set $A$ is a bounded subset of $L_{3s/2}^\infty(\R)$. 
		
		The set of functions $A$ is clearly equicontinuous since they are uniformly bounded in $[\cdot]_{C^s(\R)}$. Now, let $x\in \R$. The set $\{ u(x)\mid u\in A \}\subset \R$ is relatively compact by Heine-Borel's theorem since it is a subset of $(-c_2\abs{x}^s,c_2\abs{x}^s)$ which is bounded. Due to a version of Arzel{\'a}-Ascoli, the set $A$ is precompact in $L_{3s/2}^\infty(\R)$ equipped with the topology of compact convergence, i.e., uniform convergence on each compact subset of $\R$. 
		
		Let $\{u_n\}\subset A$ be an arbitrary sequence of functions in $A$. By the previous paragraph, we find a subsequence $\{u_{m_n}\}$ and a limit $u_\infty\in \overline{A}$ which converges uniformly on each compact subset of $\R$. We claim that $\{u_{m_n}\}$	converges in the topology of $L_{3s/2}^\infty(\R)$. Let $\eps>0$ and fix $R\coloneq   (4c_2/\eps)^{2/s}$. Since $\{u_{m_n}\1_{[-R,R]}\}$ converges to $u_\infty\1_{[-R,R]}$ uniformly, we find $n\in \N$ such that $\norm{u_{m_n}-u_\infty}_{L^\infty([-R,R])}\le \eps/2$. Thus, 
		\begin{align*}
			\norm{u_{m_n}-u_\infty}_{L_{3s/2}^\infty(\R)}&\le \frac{\eps}{2}+\norm{u_{m_n}-u_\infty}_{L_{3s/2}^\infty([-R,R]^c)}\le \frac{\eps}{2}+2c_2\norm{\abs{\cdot}^s}_{L_{3s/2}^\infty([-R,R]^c)}\\
			&\le \frac{\eps}{2}+2c_2 R^{-s/2}=\eps.
		\end{align*}
		This proves the claim. \smallskip	
		
		\textit{Step 2.} Now, we pull the limit. Since $\{b_R\mid R>1\}$ is a precompact subset of $L_{3s/2}^\infty(\R)$ we find a sequence $b_{R_n}$ that converges to $b_{\infty}$ in $L_{3s/2}^\infty(\R)$, almost everywhere and $R_n\to \infty$ as $n\to \infty$. Clearly, $b_{\infty}$ satisfies the two-sided bound \eqref{eq:one-dim-sol-halfline-twosided-bound} since the approximating sequence does so locally uniformly. Moreover, $b_\infty\in C^s(\R)$. It remains to prove that $b_\infty$ is a distributional solution to the problem in the half space. 
		
		\textit{Claim 4.} For any $\phi\in C_c^\infty(\R_+)$, there exists a constant $c_5$ such that $\int_{B_r^c}\abs{L \phi(x)}\d x\le c_5 r^{-2s}$. \smallskip
		
		This claim follows directly from the first math display in the proof of \cite[Lemma 2.2.11]{FeRo24}.\smallskip
		
		We proceed with the main proof. By claim 2 and claim 3, we find a sequence $\{b_{R_n}\}$ and a limit $b_{\infty}$ in the set $A$ with respect to the $L_{3s/2}^\infty(\R)$-topology. Let $n$ be sufficiently large such that $\norm{b_{R_n}-b_\infty}_{L_{3s/2}^\infty(\R)}<\eps$. Set $r_0\coloneq   1+\distw{0}{\supp \phi}+\diam\supp\phi$. Now, due to claim 4 we find
		\begin{align*}
			\left|\int_{\R} b_\infty(x) L \phi (x)\d x\right|&=\left|\int_{\R_+} (b_{R_n}(x)-b_\infty(x)) L \phi (x)\d x\right| \le \eps\int_{\R_+} (1+\abs{x})^{3s/2} \abs{L \phi (x)}\d x\\
			&\le \eps\int_{[0,r_0]} (1+\abs{x})^{3s/2} \abs{L \phi (x)}\d x+\eps\sum_{k\in \N}4r_0^{3s/2}2^{3sk/2}\int_{2^{k-1}r_0}^{2^kr_0}\abs{L \phi (x)}\d x\\
			&\le  \eps \norm{\phi}_{C^2}\int_{[0,r_0]} (1+\abs{x})^{3s/2}\int_{\R}(1\wedge \abs{h}^2)K(\d h) \d x+8\eps r_0^{-s/2}\sum_{k\in \N}2^{-sk/2}.
		\end{align*}
		Since $\eps>0$ was arbitrary, $b_\infty$ is a distributional solution to \eqref{eq:one-dim-sol-halfspace}. It is also a strong solution by $b_\infty\in C^s(\R)$ and the interior regularity, see \cite[Proposition 2.4.4]{FeRo24}.
	\end{proof}
	
	The next step in our path to boundary regularity is a Liouville theorem in the half space under some growth assumptions. But, in order to prepare the Liouville theorem, we provide the following regularity estimate for the symbol $\mathcal{A}(\xi)$. 
	\begin{lemma}\label{lem:regularity_symbol}
		Let $L \in \Gs$. There exists a constant $C$ depending only on $s,\lambda,$ and $\Lambda$ such that the Fourier symbol $\mathcal{A}$ of $L$, satisfies
		\begin{equation*}
			\abs{\mathcal{A}(\xi)-\mathcal{A}(\xi+\eps)}\le C\begin{cases}
				\eps \abs{\eps+\xi}^{2s-1} & s> 1/2\\
				\eps \ln(e+\abs{\xi}/\eps) & s= 1/2\\
				\eps^{2s}& s<1/2
			\end{cases}
		\end{equation*}
		for any $\eps>0$ and any $\xi \in \R$.  
	\end{lemma}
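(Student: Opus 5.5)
We work with the one-dimensional Lévy symbol, since in this section $L$ acts on functions of one variable. By symmetry of $K$,
\[
\mathcal{A}(\xi)=\int_{\R}\big(1-\cos(\xi h)\big)K(\d h).
\]
The plan is to estimate the difference $\mathcal{A}(\xi+\eps)-\mathcal{A}(\xi)$ directly from this formula and split the integral into small and large $|h|$. By Lemma~\ref{lem:basic-properties-Gs} (or since $d=1$ here), $K$ satisfies \eqref{X2a} with constant $\Lambda$. Consequently, by the same dyadic summation as in the proof of Proposition~\ref{prop:torsion_estimate}, we have for every $r>0$
\begin{equation*}
K(\{|h|>r\})\le C\Lambda r^{-2s}, \qquad \int_{|h|\le r}|h|^2K(\d h)\le C\Lambda r^{2-2s},
\end{equation*}
and, for $s>1/2$, also $\int_{|h|>r}|h|K(\d h)\le C r^{1-2s}$. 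Similarly, $\int_{|h|\le r}|h|K(\d h)\le C r^{1-2s}$ for $s<1/2$, and $\le C\ln(R/r)$ on an annulus $r<|h|<R$ when $s=1/2$.

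The elementary identity
\[
\cos(\xi h)-\cos((\xi+\eps)h)=2\sin\big((\xi+\tfrac{\eps}{2})h\big)\sin\big(\tfrac{\eps h}{2}\big)
\]
gives the pointwise bound
\[
|\cos(\xi h)-\cos((\xi+\eps)h)|\le \min\big\{2,\ \eps|h|,\ \eps|h|\cdot|(\xi+\eps/2)h|\big\}.
\]
We then split according to the case.

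\emph{Case $s<1/2$.} Use the bound $\min\{2,\eps|h|\}$ alone and split at $|h|=1/\eps$:
\[
\int_{|h|\le 1/\eps}\eps|h|\,K(\d h)+2K(\{|h|>1/\eps\})\le C\eps\cdot\eps^{2s-1}+C\eps^{2s}.
\]

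\emph{Case $s>1/2$.} Set $\eta:=|\xi|+\eps$, which is comparable to $|\xi+\eps|$ up to the harmless region $|\xi|\lesssim\eps$; there both sides are controlled by $\eps^{2s}$, and $\eps^{2s}\le C\eps|\xi+\eps|^{2s-1}$ as long as $|\xi+\eps|\gtrsim\eps$. For $|h|\le 1/\eta$ use the bound $\eps\eta h^2$, and for $|h|>1/\eta$ use $\eps|h|$:
\[
\eps\eta\,\eta^{2s-2}+\eps\,\eta^{2s-1}=2\eps\eta^{2s-1}.
\]
A small bookkeeping point remains. Near $\xi\approx-\eps$ the quantity $|\xi+\eps|$ is small, so the bound must be stated with $|\xi+\eps/2|$ or with $\max(|\xi|,|\xi+\eps|)$. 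We will check that the intended reading (equivalently, applying the estimate with the roles of $\xi$ and $\xi+\eps$ exchanged via $\mathcal{A}(-\xi)=\mathcal{A}(\xi)$) covers this.

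\emph{Case $s=1/2$.} Split into three regions: $|h|\le 1/\eta$ with the quadratic bound, giving $\le C\eps$; $1/\eta<|h|<1/\eps$ with the bound $\eps|h|$, giving $\le C\eps\ln(\eta/\eps)$; and $|h|\ge 1/\eps$ with the bound $2$, giving $\le C\eps$. Together this yields $C\eps\ln(e+|\xi|/\eps)$.

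The main obstacle is only the careful derivation of the moment bounds from the dyadic condition \eqref{X2a}, since $K$ is merely a measure. These follow by summing over dyadic annuli exactly as in Proposition~\ref{prop:torsion_estimate}; the constants depend only on $s$ and $\Lambda$, and the borderline log case is the sum of $\sim\log_2(\eta/\eps)$ equal terms.
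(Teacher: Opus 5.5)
Your proof is correct and is essentially the paper's argument: the same pointwise bound $\min\{2,\ \eps|h|,\ \eps(|\xi|+\eps)h^2\}$ on the cosine difference, the same splitting of the $h$-integral at $1/(|\xi|+\eps)$ and $1/\eps$, and the same dyadic moment bounds derived from \eqref{X2a}. The only differences are minor: you merge two of the paper's three regions when $s\neq 1/2$. Your remark about $\xi\approx-\eps$ for $s>1/2$ is also right. The literal bound fails at $\xi=-\eps$, since the left side is then $\mathcal{A}(\eps)>0$. The paper sidesteps this by assuming $\xi>0$, which is the only case used in Lemma~\ref{lem:pv-exists}, and there your $\eta=|\xi|+\eps$ coincides with $|\xi+\eps|$.
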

	\begin{proof}
		Without loss of generality, we assume that $\xi >0$ since the proof in the case $\xi =0$ follows directly from the observation $\mathcal{A}(\eps)\le \Lambda \abs{\eps}^{2s}$. Now, note that 
		\begin{equation*}
			|\cos(\xi h)-\cos((\xi+\eps)h)|\le 2 \wedge \eps \abs{h}\big( 1\wedge  (\eps+\xi)\abs{h} \big).
		\end{equation*}
		This is a direct consequence of $\abs{\cos}\le 1$, the fundamental theorem of calculus, $\abs{\sin}\le 1$, and $\abs{\sin(t)}\le \abs{t}$. 
		
		\textit{Step 1.} The term $\I{}= 2\int_{1/\eps}^\infty K(\d h)$ is easily bounded using the property \eqref{X2a} by $\abs{\I{}}\le 2\Lambda \eps^{2s}$. 
		
		\textit{Step 2.} The second term is defined as 
		\begin{equation*}
			\II{}\coloneq   \int_{1/(\eps+\xi)}^{1/\eps} \eps h K(\d h).
		\end{equation*}
		Let $N,M\in \Z$ be such that $2^{-N-1}< \eps+\xi\le 2^{-N}$ and $2^{-M-1}< \eps\le 2^{-M}$. A dyadic decomposition reveals
		\begin{equation*}
			\II{}\le \eps \sum_{k=N}^M 2^{k+1} K\big((2^{k}, 2^{k+1})\big)\le 2\eps \Lambda\sum_{k=N}^M 2^{k(1-2s)}.
		\end{equation*}
		We distinguish three cases depending on the sign of the power $2s-1$. If $s<1/2$, then 
		\begin{equation*}
			\II{}\le 2\eps \Lambda \frac{2^{(M+1)(1-2s)} - 2^{N(1-2s)}}{2^{1-2s}-1}\le \frac{2^3\Lambda}{1-2s} \eps^{2s}.
		\end{equation*}
		If $s=1/2$, then 
		\begin{equation*}
			\II{}\le 2\eps \Lambda (M-N+1)\le 2\eps \Lambda \big(\ln(1/\eps)- \ln(1/(\eps+\xi))+2\big)\le 8\Lambda \eps \ln\left(e+\frac{\xi}{\eps}\right).
		\end{equation*}
		Finally, if $s>1/2$, then 
		\begin{equation*}
			\II{}\le 2\eps \Lambda \frac{2^{N(1-2s)}-2^{(M+1)(1-2s)}}{1-2^{1-2s}}\le \frac{8 \Lambda}{2s-1} \eps(\xi+\eps)^{2s-1}.
		\end{equation*}
		
		\textit{Step 3.} The last term $\III{}$ is given by 
		\begin{equation*}
			\III{}\coloneq   \int_0^{1/(\eps+\xi)} \eps(\eps+\xi) h^2 K(\d h).
		\end{equation*}
		Pick $N\in \Z$ such that $2^{N-1}\le \eps+\xi\le 2^{N}$. Using a dyadic decomposition, we write 
		\begin{align*}
			\III{}&\le \sum_{k=N}^{\infty} \eps(\eps+\xi) 2^{2(-k+1)}K((2^{-k},\infty))\le 4\eps(\eps+\xi)\Lambda\sum_{k=N}^{\infty} 2^{-k2(1-s)}\\
			&\le 4\eps(\eps+\xi) \Lambda \frac{2^{-(N-1)2(1-s)}}{1-4^{s-1}}\le \frac{4^3 \Lambda}{\ln(4)(1-s)}  \eps(\eps+\xi)^{2s-1}.
		\end{align*}
		If instead $N\le 0$, then we receive the same estimate by using an index shift. 
	\end{proof}
	
	The next lemma is a technical ingredient in the proof of the Liouville theorem on the half line, see \autoref{prop:liouville}, and allows us to rigorously apply the Sokhotski-Plemelj theorem. 
	\begin{lemma}\label{lem:pv-exists}
		Let $L\in \Gs$. The principal value integral
		\begin{equation*}
			\pv \int_{\R} \frac{\ln \mathcal{A}(\theta)}{\theta-\xi}\d \theta
		\end{equation*}
		exists, is continuous in the variable $\xi\in \R$, and it is bounded from above and below by a constant depending only on $s, \lambda$, and $\Lambda$.
	\end{lemma}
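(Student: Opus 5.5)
The idea is to split $\ln\mathcal{A}(\theta)$ into an explicit homogeneous part and a bounded, H\"older‑type perturbation. Let
\[
g(\theta):=\ln\mathcal{A}(\theta)-2s\ln|\theta| = \ln\frac{\mathcal{A}(\theta)}{|\theta|^{2s}} .
\]
By the symbol bounds $\tilde\lambda|\theta|^{2s}\le\mathcal{A}(\theta)\le\tilde\Lambda|\theta|^{2s}$, we have $|g|\le C$ on $\R$. Moreover $\mathcal{A}$ is even, since $K$ is symmetric, and $\mathcal{A}(\theta)=\int(1-\cos(\theta h))K(\d h)$.

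The principal value of the $\ln|\theta|$ part we compute explicitly. Since $\ln|\theta|$ is even, one has $\pv\int_\R \frac{\ln|\theta|}{\theta-\xi}\d\theta = \pi^2\,\mathrm{sgn}(\xi)/2$: this is the Hilbert transform of $\ln|\cdot|$, a standard computation. It is only meant as a symmetric principal value, at $\xi$ and at infinity, and the integral at infinity converges because $\frac{1}{\theta-\xi}-\frac{1}{\theta+\xi}=O(\theta^{-2})$ after pairing. It is bounded, but discontinuous at $\xi=0$. That discontinuity must be cancelled, or it may in fact be harmless, because $\ln\mathcal{A}$ itself is what appears and the total is continuous only if the singular behaviour of $g$ compensates. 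Since this is unclear, I would instead interpret the integral at infinity as $\lim_{R\to\infty}\int_{|\theta|<R,\,|\theta-\xi|>\delta}$. I would also handle the region near $\theta=0$ directly, using that $\ln\mathcal{A}$ is locally integrable with only a logarithmic singularity, which gives continuity in $\xi$ near $0$ through the integrable log.

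For the local principal value at $\theta=\xi\neq0$, I would write
\[
\pv\int_{|\theta-\xi|<r}\frac{\ln\mathcal{A}(\theta)}{\theta-\xi}\d\theta=\int_{|\theta-\xi|<r}\frac{\ln\mathcal{A}(\theta)-\ln\mathcal{A}(\xi)}{\theta-\xi}\d\theta ,
\]
which converges absolutely by \autoref{lem:regularity_symbol}. With $\varepsilon=|\theta-\xi|$ we get $|\mathcal{A}(\theta)-\mathcal{A}(\xi)|\le C\varepsilon^{\min(2s,1)}(\ldots)$, a Dini‑type modulus (a log factor when $s=1/2$). Dividing by $\mathcal{A}\gtrsim|\xi|^{2s}$ and using the mean value theorem for $\ln$ gives $|\ln\mathcal{A}(\theta)-\ln\mathcal{A}(\xi)|\le C(\varepsilon/|\xi|)^{\min(2s,1)}$ up to a log, for $\varepsilon\le|\xi|/2$. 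Choosing $r=|\xi|/2$, this local piece is bounded uniformly and is continuous in $\xi$ by dominated convergence.

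The far part we treat by symmetry and scaling. Writing $\theta=|\xi|t$ and splitting off $\ln|\xi|^{2s}$, which integrates to zero in the symmetric principal value away from $t=1$, the remainder is $\int_{|t-1|>1/2}\frac{g(|\xi|t)+2s\ln|t|}{t-1}\d t$. Near infinity I would pair $t$ with $-t$ using evenness of $g$, which gives integrability $O(t^{-2})$. The region near $t=0$ is integrable since $|g|\le C$ and $\ln|t|$ is integrable. Both estimates are uniform in $\xi$, which yields the two‑sided bound. Continuity for $\xi\neq0$ follows by dominated convergence. The main obstacle I expect is at $\xi=0$ together with the bookkeeping of the principal value at infinity. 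At $\xi=0$ the integrand $\frac{\ln\mathcal{A}(\theta)}{\theta}$ is odd, so the symmetric principal value vanishes; for continuity as $\xi\to0$ I would check that the scaled expression above converges. This needs the behaviour of $g(|\xi|t)$ as $\xi\to0$, but oddness pairing should make that contribution $\int\frac{2\,g(|\xi|t)}{t^2-1}\cdot\,(\ldots)$ uniformly small, or else I would accept continuity only on $\R\setminus\{0\}$ plus boundedness, which may be all that the Sokhotski–Plemelj argument needs.
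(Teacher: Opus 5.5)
Your argument for existence, the uniform bound, and continuity on $\R\setminus\{0\}$ is correct, but it is organized differently from the paper's.

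\textbf{Comparison of routes.} The paper fixes $\xi\ge0$ and pairs $\xi+\theta$ with $\xi-\theta$. It then splits $\int_\eps^\infty\frac{\ln\mathcal{A}(\xi+\theta)-\ln\mathcal{A}(\xi-\theta)}{\theta}\,\d\theta$ into the ranges $(\eps,\xi/2)$, $(\xi/2,3\xi/2)$ and $(3\xi/2,\infty)$. It uses \autoref{lem:regularity_symbol} in the first range and again in the third, with the roles of $\theta$ and $\xi$ swapped. In the middle range it uses only $\mathcal{A}\asymp|\cdot|^{2s}$.

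You use \autoref{lem:regularity_symbol} only in the window $|\theta-\xi|<|\xi|/2$. In the far region you rescale $\theta=\xi t$ and discard the constant $2s\ln\xi$, whose symmetric principal value is zero. You then pair $t$ with $-t$, so the integrand becomes $2h(t)/(t^2-1)$ with $h=g(\xi\,\cdot)+2s\ln|\cdot|$. Only evenness and boundedness of $g$ are needed there.

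This is more elementary in the far field. Because everything lives on a fixed $t$-domain, it also gives continuity for $\xi\neq0$ directly by dominated convergence. The paper's proof does not actually address continuity at all. One small point: your denominator $t-1$ presumes $\xi>0$. For $\xi<0$, use that the principal value is odd in $\xi$, which follows from evenness of $\mathcal{A}$.

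\textbf{The point $\xi=0$.} Here you should stop hedging. Your own computation shows that continuity fails at $\xi=0$, so the continuity assertion in the statement can only hold on $\R\setminus\{0\}$.

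Take $\mathcal{A}(\theta)=c|\theta|^{2s}$, a multiple of the one-dimensional fractional Laplacian, which lies in $\Gs$ when $\lambda$ is small and $\Lambda$ is large. Then $g$ is constant, its principal value vanishes, and
\[
\pv\int_\R\frac{\ln\mathcal{A}(\theta)}{\theta-\xi}\,\d\theta=2s\cdot\frac{\pi^2}{2}\,\mathrm{sgn}(\xi)=s\pi^2\,\mathrm{sgn}(\xi).
\]
This equals $0$ at $\xi=0$ by oddness, but $\pm s\pi^2$ for $\xi\gtrless0$. Equivalently, the factor $\mathcal{A}_+$ built in the proof of \autoref{prop:liouville} is $\sqrt{c}\,(-i\xi)^s$, whose phase jumps at the origin.

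So your sentence that the integrable logarithm ``gives continuity in $\xi$ near $0$'' is false. No oddness pairing of the $g$-term can repair it, since in this example there is no $g$-term at all.

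State the result as: existence for every $\xi$, a uniform bound, and continuity on $\R\setminus\{0\}$. That is all that is used later, because \autoref{lem:sokhotski} is applied in the proof of \autoref{prop:liouville} only at points $\xi\in\R\setminus\{0\}$.
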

	\begin{proof}		
		Let's fix $\eps>0$ and assume without loss of generality that $\xi\ge 0$. We calculate
		\begin{align*}
			\int_{(-\eps,\eps)^c} \frac{\ln \mathcal{A}(\theta+\xi)}{\theta}\d \theta= \int_{\eps}^\infty \frac{\ln \mathcal{A}(\xi+\theta)-\ln \mathcal{A}(\xi-\theta)}{\theta}\d \theta=:\I{}+\II{}+\III{}.
		\end{align*}
		Here, the term $\I{}$ describes the part integrated on the interval $(\eps,\xi/2)$. Using the regularity estimate of $\mathcal{A}$ from \autoref{lem:regularity_symbol}, we treat the term 
		\begin{align*}
			\I{}\coloneq   \int_{\eps}^{\xi/2} \frac{\ln \mathcal{A}(\xi+\theta)-\ln \mathcal{A}(\xi-\theta)}{\theta}\d \theta
		\end{align*}
		as follows. Due to the regularity estimate for the symbol $\mathcal{A}$ in \autoref{lem:regularity_symbol}, we find a constant $C$ such that
		\begin{equation}\label{eq:log_estimate_symbol}
			\begin{split}
				|\ln \mathcal{A}(\xi+\theta)-\ln \mathcal{A}(\xi-\theta)| &\le \frac{\abs{\mathcal{A}(\xi+\theta) - \mathcal{A}(\xi-\theta) }}{\mathcal{A}(\xi-\theta)}\\
				&\le \frac{C \lambda^{-1}}{ (\xi-\theta)^{2s}}\begin{cases}
					\theta \abs{\xi+\theta}^{2s-1} &, s>1/2\\
					\theta\ln (e+\abs{\xi}/\theta)&, s=1/2\\
					\theta^{2s}&, s<1/2
				\end{cases}.
			\end{split}
		\end{equation}
		This allows us to estimate the term $\I{}$ in all three cases $s<1/2$, $s=1/2$, and $s>1/2$. Let's begin with $s>1/2$, then 
		\begin{align*}
			\abs{\I{}}\le C\int_{\eps}^{\xi/2}\frac{1}{\xi^{2s}} \abs{\xi+\theta}^{2s-1}\d \theta \le C \frac{(3/2)^{2s}}{2s}.
		\end{align*}
		If $s=1/2$, then 
		\begin{align*}
			\abs{\I{}}&\le C \int_{\eps}^{\xi/2} \frac{\ln(e+\abs{\xi}/\theta)}{\xi-\theta}\d \theta 
			= 4C\ln(e+1/2)-2C\frac{\ln(e+\xi/\eps)}{\xi/\eps}+2 C \int_{1/2}^{\xi/\eps} \frac{1}{x(e+x)}\d x\\
			&\le 4C\ln(e+1/2)+2C\int_{1/2}^{\infty} \frac{1}{x(e+x)}\d x.
		\end{align*}
		Lastly, if $s<1/2$, then 
		\begin{equation*}
			\abs{\I{}}\le C  \int_{\eps}^{\xi/2} \frac{\theta^{2s-1}}{(\xi-\theta)^{2s}}\d \theta \le \frac{C}{2s}. 
		\end{equation*}
		Now, we turn our attention to the second term 
		\begin{equation*}
			\II{}\coloneq   \int_{\xi/2}^{3\xi/2} \frac{\ln \mathcal{A}(\xi+\theta)-\ln \mathcal{A}(\xi-\theta)}{\theta}\d \theta.
		\end{equation*}
		A direct computation yields
		\begin{equation*}
			\abs{\II{}}\le 2 \int_{1/2}^{3/2} \left|\ln \frac{\mathcal{A}(\xi(1+\theta))}{\mathcal{A}(\xi(1-\theta))} \right|\d \theta\le 2 \int_{1/2}^{3/2} \left|\ln \frac{\lambda}{\Lambda}\frac{|1+\theta|^{2s}}{|1-\theta|^{2s}} \right|+ \left|\ln \frac{\Lambda}{\lambda}\frac{|1+\theta|^{2s}}{|1-\theta|^{2s}} \right|\d \theta.
		\end{equation*}
		In order to prove the claim, it remains to consider the term $\III{}$. Due to the symmetry of $A$, the estimate \eqref{eq:log_estimate_symbol} also holds with $\theta$ and $\xi$ swapped. Again, we need to distinguish three cases depending on the size of $s$. 
		
		If $s>1/2$, then 
		\begin{equation*}
			\abs{\III{}}\le C \int_{3\xi/2}^\infty \frac{\xi\abs{\xi+\theta}^{2s-1}}{\theta (\theta-\xi)^{2s}}\d \theta= C \int_{3/2}^\infty \frac{(1+\theta)^{2s-1}}{\theta (\theta-1)^{2s} }\d \theta<\infty. 
		\end{equation*} 
		If $s=1/2$, then, 
		\begin{align*}
			\abs{\III{}}&\le C \int_{3\xi/2}^\infty \frac{\xi\ln(e+\theta/\xi)}{\theta (\theta-\xi)}\d \theta= C \int_{3/2}^\infty \frac{\ln(e+x)}{x (x-1)}\d x<\infty.
		\end{align*}
		Lastly, if $s<1/2$, then 
		\begin{equation*}
			\abs{\III{}} \le C \int_{3\xi/2}^\infty \frac{\xi^{2s}}{\theta (\theta-\xi)^{2s}}\d \theta= C \int_{3/2}^{\infty} \frac{1}{\theta(\theta-1)^{2s}}\d \theta<\infty. 
		\end{equation*}
	\end{proof}
	
	We provide a version of the Sokhotski-Plemelj theorem for the convenience of the reader. 
	
	\begin{lemma}[{Sokhotski-Plemelj}]\label{lem:sokhotski}
		Let $A\in L^1(\R, 1/(1+|\theta|)^2 \d \theta)$ be an even function that is continuous at $\theta_0\in \R$ and such that the principal value integral $\pv \int_{\R} A(\theta)/(\theta-\theta_0)\d \theta$ exists. Then the complex function $a:\C\setminus \R\to \C$
		\begin{equation*}
			a(\xi)\coloneq \lim_{R\to \infty}\int_{-R}^R \frac{A(\theta)}{\theta-\xi}\d \theta, \quad (\xi \in \C\setminus \R)
		\end{equation*}
		 has the limits
		\begin{equation*}
			\begin{aligned}
				\lim\limits_{\eps\to 0}a(\theta_0+i\eps) &= \pi i A(\theta_0) + \pv \int_{\R} \frac{A(\theta)}{\theta-\theta_0}\d \theta,\\
				\lim\limits_{\eps\to 0}a(\theta_0-i\eps) &= -\pi i A(\theta_0) + \pv \int_{\R} \frac{A(\theta)}{\theta-\theta_0}\d \theta.
			\end{aligned}
		\end{equation*}
	\end{lemma}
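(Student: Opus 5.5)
We prove the statement by the classical Plemelj argument: split the integral near $\theta_0$ into a piece that is harmless in the limit and an explicit Cauchy kernel, and handle the tail at infinity with the weighted integrability and evenness of $A$.

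\textbf{Existence of $a$.} Fix $\xi=\theta_0\pm i\eps$ with $\eps>0$. Since $A$ is even, $\int_{-R}^R A(\theta)\,\theta^{-1}\cdots$-type odd contributions cancel, and we rewrite
\[\int_{-R}^R \frac{A(\theta)}{\theta-\xi}\d\theta=\int_0^R A(\theta)\frac{2\xi}{\theta^2-\xi^2}\d\theta .\]
The integrand is $O(|A(\theta)|/\theta^2)$ at infinity, so the hypothesis $A\in L^1(\R,(1+|\theta|)^{-2}\d\theta)$ gives absolute convergence. Hence the limit $R\to\infty$ defining $a(\xi)$ exists, locally uniformly in $\xi\in\C\setminus\R$.

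\textbf{Splitting the integral.} Fix $\delta\in(0,1)$ and write $a(\theta_0\pm i\eps)=I_\delta(\eps)+J_\delta(\eps)$, where $J_\delta$ is the integral over $|\theta-\theta_0|<\delta$.

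For the outer part $I_\delta$, the kernel $1/(\theta-\theta_0\mp i\eps)$ converges to $1/(\theta-\theta_0)$ as $\eps\to0$. Away from $\theta_0$ the difference of kernels is bounded by $C\eps/|\theta-\theta_0|^2$. After symmetrising the tail as above, this difference is controlled by $C\eps(1+|\theta|)^{-2}$. Dominated convergence therefore gives
\[I_\delta(\eps)\to\int_{|\theta-\theta_0|>\delta}\frac{A(\theta)}{\theta-\theta_0}\d\theta,\]
with the tail understood in the symmetric, even-paired sense.

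For the inner part, write $A(\theta)=A(\theta_0)+(A(\theta)-A(\theta_0))$.

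The constant piece is explicit:
\[A(\theta_0)\int_{-\delta}^{\delta}\frac{\d t}{t\mp i\eps}=A(\theta_0)\int_{-\delta}^{\delta}\frac{t\pm i\eps}{t^2+\eps^2}\d t.\]
The real part vanishes by oddness, and the imaginary part equals $\pm 2iA(\theta_0)\arctan(\delta/\eps)\to \pm\pi i A(\theta_0)$.

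The remainder piece is
\[\int_{-\delta}^{\delta}\frac{A(\theta_0+t)-A(\theta_0)}{t\mp i\eps}\d t.\]
We decompose its kernel as $\frac{t}{t^2+\eps^2}\pm\frac{i\eps}{t^2+\eps^2}$.
\begin{itemize}
\item The imaginary-kernel part is at most $\sup_{|t|<\delta}|A(\theta_0+t)-A(\theta_0)|\cdot\pi$. By continuity of $A$ at $\theta_0$, this tends to $0$ as $\delta\to0$, uniformly in $\eps$.
\item The real-kernel part is the main obstacle. Continuity alone does not make $(A(\theta_0+t)-A(\theta_0))/t$ integrable. We use that the principal value exists: pairing $t$ with $-t$, the real-kernel part equals
\[\int_0^\delta \frac{t}{t^2+\eps^2}\big(A(\theta_0+t)-A(\theta_0-t)\big)\d t.\]
\end{itemize}

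\textbf{Identifying the limit of the real-kernel part.} Set $g(t)=A(\theta_0+t)-A(\theta_0-t)$ and $G(\rho)=\int_\rho^\delta g(t)/t\,\d t$. Existence of the principal value means $G(\rho)\to G(0)$ as $\rho\to0$. Integrating by parts with $G$,
\[\int_0^\delta \frac{t^2}{t^2+\eps^2}\frac{g(t)}{t}\d t=\int_0^\delta G(t)\,\partial_t\Big(\frac{t^2}{t^2+\eps^2}\Big)\d t.\]
The weight $\partial_t\big(t^2/(t^2+\eps^2)\big)$ is a nonnegative approximate identity concentrating at $t=0$ as $\eps\to0$, with total mass $\delta^2/(\delta^2+\eps^2)\to1$. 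Since $G$ is continuous on $[0,\delta]$, the expression converges to $G(0)=\int_0^\delta g(t)/t\,\d t$ in the principal value sense.

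\textbf{Conclusion.} Combining the pieces, for each fixed $\delta$,
\[\lim_{\eps\to0}a(\theta_0\pm i\eps)=\pm\pi iA(\theta_0)+\pv\int_\R\frac{A(\theta)}{\theta-\theta_0}\d\theta+o_{\delta\to0}(1),\]
where the only $\delta$-dependent error is the imaginary-kernel bound above. Letting $\delta\to0$ yields both claimed formulas.
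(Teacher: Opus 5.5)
Your proof is correct and follows the same route as the paper: you split the kernel $1/(\theta-\theta_0\mp i\eps)$ into its conjugate-Poisson (real) and Poisson (imaginary) parts, the Poisson part yields $\pm\pi i A(\theta_0)$ by continuity at $\theta_0$, and the real part is identified with the principal value. The real part is where you go beyond the paper, which only calls it ``clear''; your integration by parts against the primitive $G$, using the nonnegative approximate identity $\partial_t\big(t^2/(t^2+\eps^2)\big)$, is exactly the argument that justifies this step when only existence of the principal value, and no H\"older continuity, is assumed.
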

	In the literature this lemma is typically stated for Schwartz functions or Hölder continuous functions, see e.g. \cite[Section 5, Lemma 5.1]{Esk81}, but it generalizes easily to our case. We provide a short and simple proof in order to be self contained. 	
	\begin{proof}
		We decompose
		\begin{equation*}
			\frac{1}{\theta-(\theta_0+i\eps)}= \frac{\theta-\theta_0}{(\theta-\theta_0)^2+\eps^2}+ \frac{i\eps}{(\theta-\theta_0)^2+\eps^2}.
		\end{equation*}
		Since $A$ is even and $A\in L^1(\R,1/(1+|\theta|)^2\d \theta)$, the term $a(\theta)$ exists and is continuous in $\C\setminus \R$. Clearly, the real part of $a(\theta_0+i\eps)$ is responsible for the $\pv$-integral in the limit $\eps \to 0$. Since $\frac{1}{\pi} \frac{\eps}{\theta^2+\eps^2}$ is the Poisson kernel in the half space $(\theta,\eps)\in \R^2_+$, the convergence of the imaginary part of $a(\theta_0+i\eps)$ follows from the continuity of $A$ at $\theta_0$. By complex conjugation, the result for $a(\theta-i\eps)$ follows. 		
	\end{proof}
	
	In order to prove the Liouville theorem, we use Fourier analysis. The following lemma allows us to characterize the Fourier transform of a function under specific regularity assumptions and growth bounds in sense of tempered distributions. It is decisive in order to make the proof of \autoref{prop:liouville} rigorous.
	\begin{lemma}\label{lem:distribution-estimates}
		Let $u:\R\to \R$ be a function such that for some $s\in (0,1)$ and $\eps\in (0,s/2)$ the following three properties hold
		\begin{align}
			|u(x)|&\le C |x|^s\text{ for }|x|\le 1,\\ 
			|u(x)|&\le C(1+|x|)^{2s-\eps} \text{ for all }x,\\
			[u]_{C^{3s}(R,2R)}&\le C R^{-s-\eps}\text{ for all }R\ge 1.
		\end{align}
		Then we find a function $w:\R\to \C$ satisfying 
		\begin{align*}
			|w(\xi)|&\le C |\xi|^{-1-2s+\eps} \text{ for }|\xi|\le 1,\\
			|w(\xi)|&\le C |\xi|^{\eps-2s} \text{ for }|\xi|\ge 1
		\end{align*}
		and two complex numbers $a_0$ and $a_1$ such that the Fourier transform of $u$ in terms of tempered distributions $\sF u$ can be written as
		\begin{equation*}
			\sF u = w + \sum_{k=0}^{\lfloor 2s-\eps \rfloor} a_k \partial^k \delta_0.
		\end{equation*} 
		Here, $\delta_0$ is the Dirac-delta distribution in the origin. 
	\end{lemma}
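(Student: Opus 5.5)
We decompose $u$ with a smooth dyadic partition of unity and treat the pieces separately. Fix $\chi\in C_c^\infty(-2,2)$ with $\chi=1$ on $[-1,1]$, and write $u=u_0+u_\infty$, where $u_0:=\chi u$ and $u_\infty:=(1-\chi)u$.

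\textbf{The compactly supported piece.} The function $u_0$ is bounded and compactly supported, so $\hat u_0$ is smooth and bounded, $|\hat u_0(\xi)|\le C$. This is acceptable near $0$, since $|\xi|^{-1-2s+\eps}\ge 1$ there. For $|\xi|\ge 1$ we need $|\hat u_0(\xi)|\le C|\xi|^{\eps-2s}$. The behaviour $|u|\le C|x|^s$ near the origin, together with interior regularity away from $0$, would give decay like $|\xi|^{-1-s}$ for a $C^s$-type function vanishing at the origin. However, the hypotheses only give $C^{3s}$ regularity on $(R,2R)$ for $R\ge1$, not near the origin.

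I would therefore arrange the cutoff so that the $|x|^s$ bound and the regularity hypothesis combine. On dyadic pieces $(r,2r)$ with $r<1$, I would try to obtain a scaled bound $[u]_{C^{3s}(r,2r)}\lesssim r^{-2s}$. Whether this follows from the stated hypotheses is unclear, and if it does not, this is the main gap. Assuming it holds, a dyadic decomposition $u_0=\sum_j u\psi(2^j\cdot)$ gives each piece a Fourier transform bounded by $\min(2^{-j(1+s)},\,2^{-j(1+s)}(2^{-j}|\xi|)^{-3s})$. Summing over $j$ yields $|\xi|^{-1-s}\le|\xi|^{\eps-2s}$ for $|\xi|\ge1$, since $\eps<s/2<1-s$ is not needed; the point is that $1+s>2s-\eps$.

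\textbf{The piece at infinity.} Write $u_\infty=\sum_{k\ge0}u\,\psi(2^{-k}\cdot)$ with $\psi$ supported in $\{1/2<|x|<2\}$, and set $v_k:=u\,\psi(2^{-k}\cdot)$. The two hypotheses give $|v_k|\le C2^{k(2s-\eps)}$, while the hypothesis $[u]_{C^{3s}(R,2R)}\le CR^{-s-\eps}$ controls the oscillation of $v_k$ at scale $2^k$. For each $k$ we then have
\begin{equation*}
|\hat v_k(\xi)|\le C\min\Big(2^{k(1+2s-\eps)},\ 2^{k(1-s-\eps)}|\xi|^{-3s}\Big),
\end{equation*}
where the second bound uses the standard estimate $|\hat g(\xi)|\lesssim [g]_{C^\beta}|\xi|^{-\beta}|\supp g|$ with $\beta=3s$, and the Hölder seminorm of the cutoff contributes lower-order terms of the same size.

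Summing over $k$ gives, for $|\xi|\ge1$, a total of order $|\xi|^{-3s}$ provided $1-s-\eps<0$; otherwise one must balance the two bounds. For $|\xi|<1$, the balance point $2^k\approx|\xi|^{-1}$ gives $|\xi|^{-1-2s+\eps}$, which is exactly the required bound.

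This sum does not converge as a function in general: the $L^1_{\rm loc}$ issue arises because $\sum_k\hat v_k$ converges only as distributions against test functions vanishing to order $\lfloor 2s-\eps\rfloor+1$ at $0$. I would handle this by subtracting the Taylor polynomial. For a test function $\phi$, set
\begin{equation*}
\langle \sF u,\phi\rangle=\int w\,\big(\phi-\textstyle\sum_{m\le\lfloor2s-\eps\rfloor}\phi^{(m)}(0)\xi^m/m!\;\chi(\xi)\big)+\ldots
\end{equation*}
The remaining terms are finite combinations of $\phi^{(m)}(0)$, which produce the coefficients $a_k\partial^k\delta_0$. Concretely, $w$ is the pointwise sum $\sum_k\hat v_k$ off the origin, which converges absolutely there by the bounds above. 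The difference between $\sF u$ and $w$ is a distribution supported at $\{0\}$, hence a finite combination of derivatives of $\delta_0$. The order is at most $\lfloor2s-\eps\rfloor$ because $w\,|\xi|^{m}$ is integrable near $0$ for $m>2s-\eps$. Since the sum over $k$ of $\langle \hat v_k,\phi\rangle$ converges for such $\phi$, this bounds the order of the singular part.

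The main obstacle is the high-frequency bound for $|\xi|\ge1$ in the case $3s<2s-\eps+\ldots$, that is, ensuring that the exponent $\eps-2s$ is attained. Here the large-$k$ terms decay at rate $2^{k(1-s-\eps)}$, which may grow. I would handle this by integrating by parts once to use the cancellation in $v_k$, or by splitting at $2^k\approx|\xi|^{-1}$. Failing that, I would interpolate between the two bounds for $\hat v_k$ to obtain the exponent $\eps-2s$.
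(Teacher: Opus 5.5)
Your architecture is the paper's: dyadic pieces, Fourier decay of each piece from a Hölder seminorm, a split at $2^k\approx|\xi|^{-1}$, and a support-at-the-origin argument for the $\delta$-terms. You are right that the stated hypotheses give no regularity on $(r,2r)$ for $r<1$, nor on $\R_-$. The paper's proof in fact invokes \eqref{eq:int_reg_for_u}, which holds for all $R>0$ and is exactly your bound $[u]_{C^{3s}(r,2r)}\lesssim r^{-2s}$, and it uses $u=0$ on $\R_-$ implicitly. With these, your near-origin piece is fine. (It sums to $|\xi|^{-\min\{3s,1+s\}}$, with a logarithm at $s=1/2$, rather than to $|\xi|^{-1-s}$, but this is still below $|\xi|^{\eps-2s}$.)

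The genuine gap is the piece at infinity, which you correctly flag as the main obstacle. Your bound $|\hat v_k(\xi)|\lesssim 2^{k(1-s-\eps)}|\xi|^{-3s}$ is correct, but for $s+\eps<1$ it grows in $k$.
\begin{itemize}
\item For $|\xi|\ge 1$, every $k\ge 0$ has $2^k|\xi|\ge 1$, so the minimum is always this bound. There is nothing to balance, and $\sum_k$ diverges.
\item For $|\xi|<1$, the tail $2^k>|\xi|^{-1}$ diverges in the same way.
\end{itemize}
So neither the bound $|\xi|^{-1-2s+\eps}$, nor the absolute convergence of $\sum_k\hat v_k$ off the origin, nor your order bound on the $\delta$-part follows. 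Your proposed remedies do not help. Interpolating $\min\{A,B\}\le A^\theta B^{1-\theta}$ gives a $k$-exponent $\theta(1+2s-\eps)+(1-\theta)(1-s-\eps)\ge 1-s-\eps>0$. Integrating by parts needs derivative bounds beyond the $C^{3s}$ information.

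In fact, when $s+\eps<1$, no argument using only the three hypotheses can work. Take $\eta\in C^\infty(\R)$ with $\eta=0$ on $(-\infty,1/2]$ and $\eta=1$ on $[1,\infty)$, and set $u(x)=\eta(x)x^{-s-\eps}\cos x$.
\begin{itemize}
\item On $(R,2R)$ with $R\ge 1$, all derivatives of $u$ are $O(R^{-s-\eps})$. Hence all three hypotheses hold, and $u$ even vanishes near $0$.
\item Yet $\sF u(\xi)=\tfrac12\big(\hat g(\xi-1)+\hat g(\xi+1)\big)$ with $g=\eta\, x_+^{-s-\eps}$. Since $g-x_+^{-s-\eps}$ is integrable and $\widehat{x_+^{-s-\eps}}$ is nonzero and homogeneous of degree $s+\eps-1$, we get $|\hat g(\zeta)|\ge c|\zeta|^{s+\eps-1}$ for small $\zeta$.
\item So $\sF u$ is unbounded near $\xi=1$, contradicting $|w(\xi)|\le C|\xi|^{\eps-2s}$.
\end{itemize}

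The paper's proof does not close this step either. In Case 2 it writes $|\hat u_n(\xi)|=2^n|\hat v_n(2^n\xi)|\le C|\xi|^{-3s}2^{-(s+\eps)n}$, which drops the factor $2^n$. Restoring it gives precisely your exponent $1-s-\eps$ and the same divergence. So the high-frequency bound has to come from extra structure available in the application, namely that $u$ solves $Lu=0$ on $\R_+$. One possible route is the holomorphy of $J$ on both sides of the real axis. The Hölder and growth bounds in the statement are not enough.
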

	\begin{proof}
		Let $\xi\in \R\setminus\{0\}$. We fix a dyadic decomposition. Let $\psi\in C_c^\infty(\R)$ be a function supported on $[-2,-1/2]\cup[1/2,2]$ such that $\sum_{n\in \Z} \psi(2^{-n} \xi)=\1_{\R\setminus \{0\}}(\xi)$. Instead of studying the Fourier transform of $u$ directly, we consider $u_n(x)\coloneq  u(x) \psi(2^{-n} x)$ and study the Fourier transform of the integrable function $v_N\coloneq  \sum_{n\le N} u_n$. Note that $v_N\to u$ locally uniformly on $\R$. We distinguish two cases. 
		
		\textit{Case 1.} If $n$ satisfies $2^n\le 1/|\xi|$, then we estimate
		\begin{equation*}
			|\hat{u}_n(\xi)|=\left|\left(  \int_{2^{n-1}}^{2^{n+1}}+ \int_{-2^{n+1}}^{-2^{n-1}}\right) e^{-i\xi x} \psi(2^{-n} x) u(x)\d x\right|\le C \begin{cases}
				2^{(s+1)n} & \text{ if } n\le 0,\\
				2^{(2s-\eps+1)n} &\text{ if } n>0.
			\end{cases} 
		\end{equation*}
		Here, we used that $|u(x)|\le C x_+^s$ for $x<1$ and $|u(x)|\le Cx^{2s-\eps}$ for $x\ge 1$. Summing over $n$ under the assumption $2^n\le 1/|\xi|$ yields
		\begin{align*}
			\left|\sum_{2^n\le |\xi|^{-1}}\hat{u}_n(\xi)\right|&\le C\sum_{2^n\le |\xi|^{-1}\wedge 1 } 2^{(s+1)n} + C\sum_{|\xi|^{-1}\wedge 1<2^n\le |\xi|^{-1}} 2^{(2s-\eps+1)n}\\
			&\le C\begin{cases}
				|\xi|^{-s-1}	& \text{ if }|\xi|\ge 1,\\
				|\xi|^{-2s+\eps-1}  &\text{ if }|\xi|<1.
			\end{cases}
		\end{align*}
		
		\textit{Case 2.} If $n$ satisfies $2^n>1/|\xi|$, then we need to use the interior regularity of $u$ to obtain decay. We define the function $v_n(x)\coloneq   u_n(2^{n}x)$. Note that $v_n$ is supported on $[1/2,2]$ and 
		\begin{align*}
			\norm{v_n}_{L^\infty}&\le C 2^{ns},\\
			[v_n]_{C^{3s}}&\le C 2^{(2s-\eps)n}.
		\end{align*}
		Here, we used \eqref{eq:int_reg_for_u}. Moreover, we clearly have $\hat{u}_n(\xi)= 2^n \hat{v}_n(2^n \xi)$. 
		
		Notice that $v_n$ is a $C^{3s}$-regular function supported in the compact set $[1/2,2]$. Using the identity
		\begin{equation*}
			2\hat{v}_n(\xi)= \int\left( v_n(x)-v_n\left(x+\frac{\pi}{\xi}\right) \right)e^{-ix\xi}\d x,
		\end{equation*}
		we can bound the Fourier transform via finite differences 
		\begin{equation*}
			|\hat{v}_n(\xi)|\le C \abs{\xi}^{-3s}\norm{v_n}_{C^{3s}}\le C \abs{\xi}^{-3s}2^{(2s-\eps)n}.
		\end{equation*}
		Using the definition of $v_n$, this estimate yields 
		\begin{equation*}
			|\hat{u}_n(\xi)|=2^n|\hat{v}_n(2^n \xi)|\le C \abs{\xi}^{-3s}2^{-(s+\eps)n}.
		\end{equation*}
		We complete this case by summing over all $n$ up to $N\in \N$ such that $2^n\ge 1/|\xi|$. This yields
		\begin{equation*}
			\bigg|\sum\limits_{\substack{2^n\ge 1/|\xi|\\ n\le N}}\hat{u}_n(\xi)\bigg|\le C |\xi|^{-3s} \sum_{2^n\ge 1/|\xi|} 2^{-(s+\eps)n}\le C |\xi|^{\eps-2s}.
		\end{equation*}
		
		Finally, summing over all $n$ up to $N\in \N$ yields
		\begin{equation}\label{eq:v-N-estimate}
			|\sF v_N(\xi)|\le C\begin{cases}
				|\xi|^{-1-2s+\eps}&, |\xi|\le 1,\\
				|\xi|^{\eps-2s}&, |\xi|>1.
			\end{cases}
		\end{equation}
		Note for any $\phi\in \mathcal{S}(\R)$
		\begin{equation*}
			\int_{\R} \hat{v}_N(\xi) \phi(\xi)\d \xi = \int_{\R} v_N(\xi)\hat{\phi}(\xi)\d \xi \to \int_{\R} u(\xi)\hat{\phi}(\xi)\d \xi, \text{ as }N\to\infty, \text{ i.e.,}
		\end{equation*}
		$\sF v_N\to \sF u$ as $N\to \infty$ in the sense of tempered distributions. \smallskip

		The previous estimates yield the locally uniform convergence on $\R\setminus \{0\}$ of the sum 
		\begin{equation*}
			w(\xi)\coloneq \sum_{n\in \Z} \hat{u}_n(\xi).
		\end{equation*}
		We define $w(0)\coloneq 0$. Clearly, this function $w$ satisfies the upper bound \eqref{eq:v-N-estimate}. \smallskip  
		
		Since $w\notin L_\loc^1(\R)$ this function does not directly yield a tempered distribution. Instead we define $W$ for $\phi\in \mathcal{S}(\R)$ in the case $2s-\eps<1$ via
		\begin{equation}\label{eq:def-W}
			W[\phi]\coloneq \int w(\xi)\big( \phi(\xi)-\phi(0)\chi(\xi) \big)\d \xi
		\end{equation}
		where $\chi\in C_c^\infty(\R)$ is a smooth, even cutoff with $\phi=1$ on $[-1,1]$. In the case $2s-\eps\ge 1$, we set 
		\begin{equation*}
			W[\phi]\coloneq \int w(\xi)\big( \phi(\xi)-(\phi(0)+\xi\phi'(0))\chi(\xi) \big)\d \xi.
		\end{equation*}
		Note that if $\phi$ is not supported in $\{0\}$, then $W[\phi]=\int w\phi $. 
		
		\textit{Claim.} The support of the tempered distribution $\sF u - W$ is contained in $\{0\}$. 
		
		Let $A\subset \R\setminus\{0\}$ be open, bounded and $\phi \in \mathcal{S}(\R)$ such that $\supp\phi\subset A $. Since $\hat{v}_N\to \sF u$ in $\mathcal{S}'(\R)$ we estimate 
		\begin{align*}
			| (\sF u - W)[\phi]| &= |\lim\limits_{N\to \infty}\int (\hat{v}_N(\xi)-w(\xi))\phi(\xi)\d \xi= \lim\limits_{N\to \infty}|\sum_{n=N+1}^\infty\int u_n(\xi)\hat{\phi}(\xi)\d \xi|\\
			&\le \lim\limits_{N\to \infty}\int_{(-2^{N-1}, 2^{N-1})^c} |u(\xi)\hat{\phi}(\xi)|\d \xi=0
		\end{align*}
		because $\hat{\phi}$ is a Schwartz function and, thus, $u \hat{\phi}$ is integrable. This proves the claim.
		
		It remains to identify the distribution $\sF u$ in the origin.
		
		Due to \cite[Proposition 2.4.1]{Gra14}, we know that the tempered distribution $\sF u -W$, since it is supported on $\{0\}$, must be a finite sum of Dirac-delta distributions and its derivatives, i.e., we find $K\in \N$ and complex numbers $a_0,\dots, a_k$ such that 
		\begin{equation*}
			\sF u - W = \sum_{k=0}^{K} a_k \delta_0^{(k)}.
		\end{equation*}		
		Let's distinguish two cases. \smallskip
		
		If $2s-\eps<1$, then for a Schwartz function $\phi\in \mathcal{S}(\R)$ supported in $[-1,1]$ that vanishes at the origin of order $1$ we find
		\begin{align*}
			(\sF u - W)[\phi]= \lim\limits_{N\to \infty} \int_{-1}^1 \big(\hat{v}_N(\xi)-w(\xi)\big) \phi(\xi)\d \xi. 
		\end{align*}
		Due to \eqref{eq:v-N-estimate} and since $\phi$ vanishes of order $1$, we may estimate
		\begin{equation*}
			|\big(\hat{v}_N(\xi)-w(\xi)\big) \phi(\xi)\1_{(-1,1)}(\xi)|\le 2C |x|^{-2s+\eps}\1_{(-1,1)}(\xi)=:g(\xi).
		\end{equation*}
		Note that the function $g$ is integrable and dominates the sequence of functions. By dominated convergence, we know that $\int_{-1}^1 \big(\hat{v}_N(\xi)-w(\xi)\big) \phi(\xi)\d x$ converges to zero as $N\to \infty$ since $w$ is the pointwise limit of $\hat{v}_N$ almost everywhere. Thus, $(\sF u-W)[\phi]=0$ for any distribution vanishing of order $1$. This immediately yields that $a_1=\dots = a_K=0$. \smallskip
		
		If $1\le 2s-\eps<2$, then we repeat the above procedure with a Schwartz function that vanishes of order $2$ to deduce that $a_2=\dots=a_K=0$. 
	\end{proof}
	
	In order to apply \autoref{lem:distribution-estimates} to a solution of \eqref{eq:one-dim-sol-halfspace}, we check the prerequisites in the next lemma. 
	\begin{lemma}\label{lem:int_reg_for_u}
		Let $L\in \Gs$. If $u\in L_{2s-\eps}^\infty(\R)$ is a continuous distributional solution of \eqref{eq:one-dim-sol-halfspace}, then $u\in C^{3s}_{\loc}(\R_+)$ and we find a constant $C$ such that 
		\begin{equation}\label{eq:cs-one-dim-estimate}
			|u(x)|\le C x_+^s
		\end{equation}
		for all $x\in [0,1]$ and
		\begin{equation}\label{eq:int_reg_for_u}
			[u]_{C^{3s}(R,2R)}\le C\frac{(1+R)^{s-\eps}}{R^{2s}}
		\end{equation}
		for all $R>0$ if $s\ne 1/3$. If $s=1/3$, then $[u]_{C^{1-\eps/2}(R,2R)}\le C R^{-1/3-\eps/2}$.
		
	\end{lemma}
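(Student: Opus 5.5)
The argument has two parts: a boundary decay estimate near $0$, and a scaled interior regularity estimate on dyadic intervals $(R,2R)$. Throughout, $u$ is a continuous distributional solution of \eqref{eq:one-dim-sol-halfspace} with $\|u\|_{L^\infty_{2s-\eps}(\R)}<\infty$. Local regularity $u\in C^{3s}_{\loc}(\R_+)$ follows from the interior estimates \cite[Theorem 2.4.3, Proposition 2.4.4]{FeRo24}. These apply on any ball $B_r(x_0)\subset\R_+$ because $Lu=0$ there and $u$ lies in the tail space $L^\infty_{2s-\eps}$, which is admissible since $2s-\eps<2s$. If needed, we first bootstrap from $C^s$ to $C^{2s}$ to $C^{3s}$ by the standard iteration for $L$-harmonic functions with good tails (or differentiate along incremental quotients). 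When $s=1/3$ we have $3s=1$, and we only claim $C^{1-\eps/2}$ to avoid the Lipschitz endpoint.

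\textbf{Step 1: boundary decay \eqref{eq:cs-one-dim-estimate}.} The plan is a comparison argument with the barrier $b$ from \autoref{prop:one-dim-sol-halfline}, truncated. On the interval $(0,2)$ we split $u=u\1_{(-\infty,2)}+u\1_{[2,\infty)}$. The far part $g:=u\1_{[2,\infty)}$ contributes, for $x\in(0,1)$,
\[
|L(u\1_{[2,\infty)})(x)|\le \int_{|y|\ge 1}|u(x+y)|K(\d y)\le C\|u\|_{L^\infty_{2s-\eps}},
\]
which is finite by \eqref{X2a} and a dyadic sum, since $(1+|y|)^{2s-\eps}$ is integrable against $K$ away from $0$. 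Hence $v:=u\1_{(-\infty,2)}$ solves $Lv=h$ in $(0,1)$ with $\|h\|_{L^\infty(0,1)}\le C\|u\|_{L^\infty_{2s-\eps}}$, with $v=0$ on $\R_-$ and $|v|\le C$ on $[1,2)$.

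As a supersolution we take $\phi:=M(\tilde b_3+b\1_{(-\infty,3)})$. Here $\tilde b_3$ is the torsion function of \autoref{prop:torsion_estimate} and gives $L\tilde b_3=1$ on $(0,3)$; the term $b\1_{(-\infty,3)}$ satisfies $L(b\1_{(-\infty,3)})=L(b\1_{[3,\infty)})\cdot(-1)\ge -C$ on $(0,1)$, by the growth of $b$. We choose $M$ large so that $L\phi\ge |h|$ in $(0,1)$ and $\phi\ge |v|$ on $[1,\infty)$. The lower bound $b\ge c$ on $[1,2]$ ensures the latter. The weak maximum principle (applied in distributional/continuous form, using continuity of $u$ on $\overline{(0,1)}$) then gives $|v|\le\phi\le C x^s$ on $(0,1)$. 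This uses \eqref{eq:torsion_estimate} and \eqref{eq:one-dim-sol-halfline-twosided-bound}.

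\textbf{Step 2: interior estimate \eqref{eq:int_reg_for_u}.} Fix $R>0$ and apply the scaled interior $C^{3s}$ estimate on $B_{R/2}(3R/2)$, which lies in $\R_+$ and contains $(R,2R)$ after a slight enlargement. Rescaling by \autoref{lem:basic-properties-Gs}, $u_R(x):=u(Rx)$ solves $L_Ru_R=0$ in $(1/2,5/2)$, so
\[
R^{3s}[u]_{C^{3s}(R,2R)}=[u_R]_{C^{3s}(1,2)}\le C\,\|u_R\|_{L^\infty_{2s-\eps/2}(\R)} .
\]
It remains to bound the weighted norm of $u_R$. For $R\ge1$ we have $|u(Rx)|\le C(1+R|x|)^{2s-\eps}\le CR^{2s-\eps}(1+|x|)^{2s-\eps}$, so the right-hand side is at most $CR^{2s-\eps}$, giving $[u]\le CR^{-s-\eps}$. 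This matches $(1+R)^{s-\eps}R^{-2s}$. For $R<1$ we use Step 1 on $|x|\le 1/R$, namely $|u(Rx)|\le CR^sx_+^s$, and the growth bound beyond that, which together give $\|u_R\|_{L^\infty_{2s-\eps/2}}\le CR^s$. Hence $[u]\le CR^{-2s}$, as claimed.

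\textbf{Main obstacle.} The main obstacle is Step 1, specifically making the comparison rigorous when $u$ is only a distributional solution with polynomially growing tail. We must justify the maximum principle for $\phi-v$ with a merely bounded right-hand side. I would first upgrade $v$ to a weak solution on $(0,1)$ via the interior regularity and continuity up to $\partial(0,1)$, as at the end of the proof of \autoref{prop:torsion_estimate}, and then invoke \cite[Lemma 2.3.3]{FeRo24}-type uniqueness/comparison.
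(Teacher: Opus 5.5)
Your Step 1 is essentially the paper's argument. The paper compares $u\1_{(0,2)}$ with a multiple of the torsion function $\tilde b_3$ alone, so your extra term $b\1_{(-\infty,3)}$ is unnecessary. Also, $L(b\1_{(-\infty,3)})\ge 0$ on $(0,1)$, not merely $\ge -C$; the weaker bound would not let you choose $M$.

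\textbf{The gap is in Step 2.} It lies in the inequality $[u_R]_{C^{3s}(1,2)}\le C\|u_R\|_{L^\infty_{2s-\eps/2}(\R)}$, which you justify only by $L_Ru_R=0$ in $(1/2,5/2)$. For a general $L\in\Gs$ no such interior estimate exists. From a weighted $L^\infty$ tail bound one only gets order $2s$ (up to an arbitrarily small loss), \cite[Theorem 2.4.3]{FeRo24}. Any gain beyond $2s$ comes from \cite[Proposition 2.4.4]{FeRo24}, whose right-hand side is the global norm $\|u\|_{C^{\alpha}(\R)}$ plus a $C^\alpha$ norm of $Lu$ in the ball.

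This is not a technicality. Suppose $K$ has unit atoms at $\pm1$ on top of a smooth part $L'$. Then $Lu=0$ in $(-1/2,1/2)$ reads $L'u=u(\cdot+1)+u(\cdot-1)-2u$, so $u$ near $0$ can gain at most $2s$ over the regularity of $u$ near $\pm1$. In the half-line problem the limiting roughness is $u\sim x_+^s$ at the origin. That is precisely why the lemma asserts $C^{3s}=C^{2s+s}$ and nothing more. The same objection applies to your justification of $u\in C^{3s}_{\loc}(\R_+)$ via the tail space. Your remark about bootstrapping points the right way, but that bootstrap, done quantitatively, is the actual content of the lemma.

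Concretely, the missing chain is the following.
\begin{enumerate}
\item From Step 1 and the growth bound, $|u(x)|\le Cx_+^s(1+|x|)^{s-\eps}$.
\item Together with scaled $C^{2s-\eps'}$ interior estimates, this gives $u\in C^s_{\loc}(\R)$, across the origin, and $[u]_{C^s([R,2R])}\le C(1+R)^{s-\eps}$ for every $R>0$.
\item Localize: set $v:=u(R\,\cdot)\eta$ with $\eta\equiv1$ on $[1/4,4]$ and $\eta\equiv 0$ outside $[1/8,8]$. The bounds above give $\|v\|_{C^s(\R)}\le CR^s(1+R)^{s-\eps}$.
\item On $(1/4,4)$ you have $\tilde Lv(x)=\int u(R(x+h))\big(1-\eta(x+h)\big)\tilde K(\d h)$, so $[\tilde Lv]_{C^s([1/2,5/2])}\le CR^s(1+R)^{s-\eps}$. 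This is where the $C^s$ regularity of $u$ away from $[R/4,4R]$, including near $0$, enters.
\item Only now does \cite[Proposition 2.4.4]{FeRo24} with $\alpha=s$ give $R^{3s}[u]_{C^{3s}([R,2R])}\le CR^s(1+R)^{s-\eps}$.
\end{enumerate}
Your scaling bookkeeping of $\|u_R\|_{L^\infty_{2s-\eps/2}}$ is correct, but it controls the wrong quantity.
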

	\begin{proof}
		Consider the modified function $\tilde{u}\coloneq   u\1_{(0,2)}$. This function satisfies for any $x\in (0,1)$
		\begin{align*}
			\abs{L \tilde{u}(x)}&= |\int_{\R} u(x+h)\1_{[2,\infty)}(x+h)K(\d h)| \le C \int_{\R} (1+\abs{x+h})^{2s-\eps}\1_{[2,\infty)}(x+h)K(\d h)\\
			&\le  C \sum_{k\in \N} 2^{1+(2s-\eps)k}\int_{2^{k-1}}^{2^k} K(\d h)\le \Lambda C \frac{2^{1+2s} }{1-2^{-\eps}}.
		\end{align*}
		Moreover, for any $x\in [1,\infty)$
		\begin{align*}
			\tilde{u}(x)\le C(1+\abs{x})^{2s-\eps}\1_{(1,2)}(x)\le C3^{2s-\eps}\1_{(1,2)}(x).
		\end{align*}
		Let $\tilde{b}_{3}$ be the torsion function in $(0,3)$. By \autoref{prop:torsion_estimate}, we find a constant $c_1$ such that $\tilde{b}_{3}(x)\ge c_1^{-1}4^s(x\wedge 3-x)^s\ge c_1^{-1}4^s$ for any $x\in (1,2)$. An application of the weak maximum principle to the function $(\Lambda C \frac{2^{1+2s} }{1-2^{-\eps}} + c_14^{-s}C3^{2s-\eps})\tilde{b}_{3} -\tilde{u}$ yields the estimate \eqref{eq:cs-one-dim-estimate}.  \smallskip
		
		Together with the growth bound, we find 
		\begin{equation}\label{eq:aux-growth-estimate}
			|u(x)|\le C x_+^s (1+|x|)^{s-\eps}.
		\end{equation}

		Next, we need quantified $C^{s}$-regularity estimate. 
		
		\textit{Claim 1.} The function $u$ belongs to $C_{\loc}^{s}(\R)$. 
		
		This claim is a direct consequence of the estimate \eqref{eq:aux-growth-estimate} and the interior regularity \cite[Theorem 2.4.3]{FeRo24}.\smallskip
		
		\textit{Claim 2.} We find a constant $C$ such that for all $R>0$
		\begin{equation*}
			[u]_{C^s([R,2R])}\le C (1+R)^{s-\eps}.
		\end{equation*}
		
		In order to prove this claim, we define a new function. Let $\eta$ be a smooth compactly supported bump function such that $\eta=1$ on $[1/4,4]$ and $\eta=0$ on $[1/8,8]^c$. We define $v(x)\coloneq  u(Rx)\eta(x)$. This function satisfies for $x\in (1/4,4)$
		\begin{equation*}
			\tilde{L} v(x)= 0+\int_\R u(R(x+h)) (1-\eta(x+h))\tilde{K}(\d h).
		\end{equation*}	
		Here, $\tilde{L}\in \mathcal{G}_s(\lambda,\Lambda)$ is driven by the L{\'e}vy-measure $\tilde{K}= R^{2s}K(R (\cdot))$, see \cite[Remark 2.1.19]{FeRo24}. Since $\eta=1$ on $[1/4,4]$, we estimate for $x\in (1/2,5/2)$
		\begin{align*}
			|\tilde{L} v(x)|\le \Big( \int_{4-x}^\infty +\int_{-x }^{1/4-x} \Big)|u(R(x+h))|\tilde{K}(\d h)=\I{}+\II{}.
		\end{align*}
		To estimate $\I{}$, we use a dyadic decomposition of the integral. Note that due to \eqref{eq:aux-growth-estimate}
		\begin{align*}
			\I{}\le \sum_{n=1}^\infty \int_{2^{n}}^{2^{n+1}} C(Rh)^s(1+Rh)^{s-\eps} \tilde{K}(\d h)\le 2^4C\Lambda R^s(1+R)^{s-\eps}  \sum_{n=1}^\infty 2^{-n\eps}.
		\end{align*}
		Due to \eqref{eq:aux-growth-estimate}, the term $\II{}$ is trivially bounded from above by $CR^s(1+R)^{s-\eps}$. 
		
		The interior regularity estimate \cite[Theorem 2.4.3]{FeRo24} applied to the solution $v$ yields
		\begin{equation*}
			R^s[u]_{C^s([R,2R])}=[v]_{C^s([1,2])}\le \norm{v}_{C^{2s-\eps'}([1,2])}\le C\big( \norm{v}_{L_{2s-\eps}^\infty(\R)} +C R^s(1+R)^{s-\eps} \big).
		\end{equation*}
		Note that $\norm{v}_{L_{2s-\eps}^\infty(\R)}\le \norm{u}_{L^\infty(R/8,8R)}\le C R^s(1+R)^{s-\eps}$ where we used \eqref{eq:aux-growth-estimate} again. This yields the claim.\medskip 
		
		Now, we are in the position to prove \eqref{eq:int_reg_for_u}. We prove the estimate for $s\ne 1/3$ since the proof in the case $s=1/3$ follows analogously. Let $R>0$ and, again, we consider the function $v$ as in claim 2. Instead of the size of the inhomogeneity $\tilde{L}v$, we need to bound $\|\tilde{L}v\|_{C^s([1/2,5/2])}$ in order to apply \cite[Proposition 2.4.4]{FeRo24}. Let $x,y\in [1/2,5/2]$, then 
		\begin{align*}
			|\tilde{L}v(x)-\tilde{L}v(y)|&\le \Big(\int_{3/2}^\infty+\int_{-2/5}^{-1/4}\Big) \Big( |u(R(x+h))(\eta(y+h)-\eta(x+h))| \\
			&\quad+ |u(R(x+h))-u(R(y+h))|(1-\eta(y+h)) \Big) \tilde{K}(\d h)=:\III{}+\IV{}.
		\end{align*}
		Since $\eta$ is smooth, we simply estimate the first term via 
		\begin{equation*}
			\III{}\le |x-y|2C\int_{1/4}^\infty  R^s(1+R)^{s-\eps}|h|^{2s-\eps}\tilde{K}(\d h)\le C|x-y| R^s(1+R)^{s-\eps}.
		\end{equation*}
		Moreover, we use claim 2, $u\in C_{\loc}^s(\R)$ and estimate the second term as follows.
		\begin{equation*}
			\IV{}\le C R^s(1+R)^{s-\eps}|x-y|^s  \int_{1/4}^\infty |h|^{s-\eps}\tilde{K}(\d h).
		\end{equation*}
		This allows us to use the higher order interior regularity estimate \cite[Proposition 2.4.4]{FeRo24} for $\tilde{L}v$ which yields
		\begin{equation}\label{eq:int-C3s-aux1}
			\begin{split}
				R^{3s}[u]_{C^{3s}([R,2R])}&= [v]_{C^{3s}([1,2])}\le C\big(\norm{v}_{C^s(\R)}+ \|\tilde{L}v\|_{C^s([1/2,5/2])} \big)\\
				&\le C\big( R^s(1+R)^{s-\eps}+ [v]_{C^s(\R)}+ [\tilde{L}v]_{C^s([1/2,5/2])}  \big).
			\end{split}
		\end{equation}
		Here, we use a similar estimate as in the proof of claim 2 in order to bound the absolute value of $v$ and the inhomogeneity. It remains to consider the seminorms. Note that, since $v$ is supported in $[1/8,8]$,
		\begin{align*}
			[v]_{C^s(\R)}&= \sup_{\substack{x\in [1/8,8]\\y\in \R}}\frac{|u(Rx)\eta(x)-u(Ry)\eta(y)|}{|x-y|^{s}}\le \sup_{\substack{x\in [1/8,8]\\y\in \R}}|u(Rx)|\frac{|\eta(x)-\eta(y)|}{|x-y|^{s}} \\
			&\quad +\sup_{\substack{x\in [1/8,8]\\y\in \R}}|\eta(y)|\frac{|u(Rx)-u(Ry)|}{|x-y|^{s}} \le CR^s(1+R)^{s-\eps}[\eta]_{C^s(\R)}+ R^s[u]_{C^s([R/8,8R])}\\
			&\le CR^s(1+R)^{s-\eps}.
		\end{align*}
		Here, we used \eqref{eq:aux-growth-estimate} and claim 2. It remains to consider $[\tilde{L}v]_{C^s([1/2,5/2])}$. Due to the above estimates on $\III{}$ and $\IV{}$, we find $[\tilde{L}v]_{C^s([1/2,5/2])}\le CR^s(1+R)^{s-\eps}$. \smallskip		
		
		Combining the estimate for both seminorms with \eqref{eq:int-C3s-aux1} yields
		\begin{equation*}
			[u]_{C^{3s}([R,2R])}\le C\frac{(1+R)^{s-\eps}}{R^{2s}},
		\end{equation*}
as wanted.
	\end{proof}
	We have collected all necessary auxiliary results that allow us to prove a higher order Liouville result in the half line. 	
	\begin{proposition}\label{prop:liouville}
		Let $L\in \Gs$ and $\eps>0$. If $u\in L_{2s-\eps}^\infty(\R)$ is a continuous distributional solution of \eqref{eq:one-dim-sol-halfspace}, then we find a constant $C$ such that $u=Cb$ where $b$ is the function from \autoref{prop:one-dim-sol-halfline}. 
	\end{proposition}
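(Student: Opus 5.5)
The plan is the Wiener--Hopf argument sketched in the introduction, made rigorous with the lemmas above. By \autoref{lem:int_reg_for_u} the hypotheses of \autoref{lem:distribution-estimates} hold (after shrinking $\eps$ if necessary). Hence $\sF u = w + a_0\delta_0 + a_1\delta_0'$, where $|w(\xi)|\le C|\xi|^{-1-2s+\eps}$ near $0$ and $|w(\xi)|\le C|\xi|^{\eps-2s}$ at infinity. The same analysis applies to $b$ from \autoref{prop:one-dim-sol-halfline}.

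\textbf{Analytic continuations.} Since $u=0$ on $\R_-$, the Paley--Wiener theorem gives that $\hat u$ extends analytically to $\{\operatorname{Im}\xi<0\}$, via $\hat u(\xi)=\int_0^\infty u(x)e^{-ix\xi}\,\d x$, with polynomial growth as $\operatorname{Im}\xi\to 0$. Set $f:=Lu$, understood as a distribution. Then $f$ is supported in $\overline{\R}_-$, and for $x<0$ it equals $-\int u(x+h)K(\d h)$. Hence $|f(x)|\lesssim |x|^{-2s}$ as $x\to-\infty$ by \eqref{X2a} and the growth of $u$. It is also integrable near $0^-$, since $|f(x)|\lesssim |x|^{-s}$ by the $C^s$ bound \eqref{eq:cs-one-dim-estimate}. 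Therefore $\hat f$ is analytic in $\{\operatorname{Im}\xi>0\}$. On $\R\setminus\{0\}$ we have $\hat f=\mathcal A\hat u$ in the distributional sense.

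\textbf{Wiener--Hopf factorization.} Define, for $\xi\in\C\setminus\R$,
\[
a(\xi):=\frac{1}{2\pi i}\lim_{R\to\infty}\int_{-R}^R\frac{\ln\mathcal A(\theta)-\ln(1+\theta^2)^{s}}{\theta-\xi}\,\d\theta .
\]
Put $\mathcal A_\pm(\xi):=(\xi\pm i)^{s}e^{\pm a(\xi)}$, with the sign conventions chosen so that $\mathcal A_+$ is analytic and zero-free in the upper half plane and $\mathcal A_-$ in the lower one. The integrand is bounded, because $\lambda|\theta|^{2s}\le\mathcal A\le\Lambda|\theta|^{2s}$ is compensated near $0$ only up to a logarithm. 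So it is better to work directly with $\ln\mathcal A$, which lies in $L^1((1+|\theta|)^{-2}\d\theta)$, and use the version in \autoref{lem:sokhotski}.

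By \autoref{lem:pv-exists} and \autoref{lem:sokhotski}, the boundary values satisfy $\mathcal A_+\mathcal A_-=\mathcal A$ on $\R\setminus\{0\}$. Moreover, $|\mathcal A_\pm(\xi)|\asymp|\xi|^{s}$ up to constants, because the principal value is bounded and $\operatorname{Re}$ of the $\pm\pi i$ term gives exactly $\tfrac12\ln\mathcal A$. This two-sided bound $|\mathcal A_\pm|\asymp|\xi|^s$, uniformly up to the real axis, is the key quantitative output.

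\textbf{Gluing and Liouville.} Set $J:=\mathcal A_-\hat u$ in the lower half plane and $J:=\hat f/\mathcal A_+$ in the upper half plane. Their boundary values agree as distributions on $\R\setminus\{0\}$, and both are locally integrable there. By the edge-of-the-wedge theorem, or Morera's theorem after a mollification argument, $J$ is entire on $\C\setminus\{0\}$.

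Growth at infinity: $|\hat u|\lesssim|\xi|^{\eps-2s}$ and $|\mathcal A_-|\lesssim|\xi|^{s}$ give $|J|\lesssim|\xi|^{\eps-s}\to0$. The same bound holds in the open half planes, using Phragmén--Lindelöf together with the Paley--Wiener bounds.

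Growth at the origin: $|J|\lesssim|\xi|^{s}\,|\xi|^{-1-2s+\eps}=|\xi|^{-1-s+\eps}<|\xi|^{-2}$. Hence $0$ is at most a pole of order $1$; the $\delta_0'$ term must vanish because of this bound.

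Consequently $J(\xi)=c/\xi$, and so $\hat u=c/(\xi\mathcal A_-(\xi))$ up to a distribution supported at $0$. That remainder is excluded by the growth $u=o(|x|^{2s})$ combined with $u=0$ on $\R_-$: a polynomial vanishing on $\R_-$ is zero. Applying the same reasoning to $b$ yields $\hat b=c_b/(\xi\mathcal A_-)$ with $c_b\ne0$, since $b\not\equiv0$. Therefore $u=(c/c_b)\,b$.

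\textbf{Main obstacle.} I expect the hardest step to be the rigorous gluing across the real axis near $\xi=0$. There one must justify, from the distributional identity $\hat f=\mathcal A\hat u$ and the boundary-value behaviour of $\mathcal A_\pm$ from \autoref{lem:sokhotski}, that the two one-sided analytic functions match as boundary values and that the singularity at $0$ is isolated with the stated bound. I would first verify this on mollified test functions, using the explicit dyadic estimates from \autoref{lem:distribution-estimates}.
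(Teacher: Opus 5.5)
Your proposal is correct and follows essentially the same route as the paper. You decompose $\sF u=w+a_0\delta_0+a_1\delta_0'$ via \autoref{lem:distribution-estimates} and factor $\mathcal{A}=\mathcal{A}_+\mathcal{A}_-$ with $\mathcal{A}_\pm=e^{a_\pm}$ built from the Cauchy integral of $\ln\mathcal{A}$; the $(\xi\pm i)^s$ normalization you first write is unnecessary, and the version you settle on is exactly the paper's. You then glue $J=\mathcal{A}_-\hat u=\hat f/\mathcal{A}_+$ across $\R\setminus\{0\}$, and your ``at most a simple pole at $0$, decay at infinity'' is the paper's ``$\xi J$ has a removable singularity and sublinear growth''. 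Finally, comparison with $b$ leaves a polynomial vanishing on $\R_-$, which must be zero.

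One slip is inessential: from the a priori growth of $u$ you only get $|Lu(x)|\lesssim|x|^{-\eps}$ as $x\to-\infty$ (for $u=b$ it is of order $|x|^{-s}$), not $|x|^{-2s}$. This still suffices for analyticity of $\hat f$ in the upper half plane. Your appeal to Paley--Wiener bounds plus Phragm\'en--Lindel\"of, to control $J$ off the real axis both at infinity and near $\xi=0$, is a point the paper passes over. The paper applies Cauchy's inequalities and Riemann's removable-singularity theorem using only real-line bounds.
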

	\begin{proof}
		Before we start with the main part of the proof, we prove a few auxiliary properties of the solution $u$. Without loss of generality, we assume that $\eps<s/2$. 
		
		\textit{Claim 1.} The function $u$ is also a solution in the pointwise sense and in sense of a tempered distribution, i.e., when tested against Schwartz functions. 
		
		The first part is a direct consequence of the interior regularity \autoref{lem:int_reg_for_u} together with the growth estimate and \eqref{eq:cs-one-dim-estimate}. The second part is simply proven via approximation since the function $u$ is not growing faster than $\abs{x}^{2s-\eps}$. This proves the claim. \smallskip
		
		\textit{Step 1.} Now, we setup the Wiener-Hopf equation.
		
		We would like to proceed by taking the Fourier-transform of both $u$ and $L u$. This is not possible in the classical sense. Note that $u\in L_{\loc}^1(\R)$ with at most polynomial growth. Moreover, since $Lu=0$ in $(0,\infty)$ and $u(x)\le C x_+^s (1+|x|)^{s-\eps}$, we also know $L u \in L_{\loc}^1(\R)$ with at most polynomial growth outside of the origin. Thus, we may take the Fourier transform in the sense of tempered distributions. Since $u =0$ in $(-\infty,0]$, the complex function $\C_-\ni\xi\mapsto \hat{u}(\xi)$ is holomorphic in $\C_-$. In a similar fashion, $\widehat{L u}$ can be extended to a holomorphic function in $\C_+$. It may also be viewed as a tempered distribution which is continuous in the sense that for $\xi \in \C_-$
		\begin{equation*}
			\hat{u}(\xi)\to \sF u(\Re\xi) \text{ as }\Im\xi \to 0-, \text{ i.e.,}
		\end{equation*}
		\begin{equation*}
			\int_{\R} \hat{u}(x-i\eps)\phi(x)\d x \to \int_{\R}u(x)\hat{\phi}(x)\d x.
		\end{equation*}
		
		\textit{Claim 2.} $\sF L u[\phi]=u[\mathcal{A} \hat{\phi}]=\mathcal{A}\sF  u[\phi]$ for any Schwartz function $\phi\in \mathcal{S}(\R)$. 
		
		For any Schwartz function $\phi\in \mathcal{S}(\R)$, we have due to the symmetry of $L$ in $L^2(\R)$
		\begin{align*}
			\sF L u[\phi]&= \int_{\R} L u(x)\overline{\check{\phi}}(x)\d x=\int_\R  u(x) \overline{L\check{\phi}(x)}\d x\\
			&= \int_\R  u(x) \overline{\sF_{\xi}^{-1}\mathcal{A}(\xi)\phi(\xi)}(x)\d x=  \mathcal{A}(\xi)\sF u[\phi].
		\end{align*}
		This proves the claim.\smallskip
		
		Let us denote $f\coloneq  L u$ for the distributional solution on $\R$ to improve readability. Then claim 2 boils down to the Wiener-Hopf equation 
\[\mathcal{A}(\xi)\sF u= \sF f \quad \textrm{in}\quad \mathcal{S}^\prime.\]
		
		\textit{Step 2.} Now, we use the Wiener-Hopf factorization to decompose the symbol $\mathcal{A}(\xi)$. 
		
		Let us first make the observation that the symbol $\mathcal{A}$ is Hölder continuous to some order by \autoref{lem:regularity_symbol}. This is crucial in the following steps. 
		
		We define two functions $a_+:\C_+\to \C$ and $a_-:\C_-\to \C$ via
		\begin{align*}
			a_+(\xi)&\coloneq   \frac{1}{2\pi i}\int_{\R} \frac{\ln \mathcal{A}(\theta)}{\theta-\xi}\d \theta, \qquad \Im \xi >0,\\
			a_-(\xi)&\coloneq   \frac{-1}{2\pi i}\int_{\R} \frac{\ln \mathcal{A}(\theta)}{\theta-\xi}\d \theta,\qquad  \Im \xi <0.
		\end{align*}
		By Sokhotski-Plemelj's theorem, see \autoref{lem:sokhotski}, \autoref{lem:pv-exists}, and \autoref{lem:regularity_symbol}, we find that for any $\xi \in \R\setminus \{0\}$ in the limit $\eps \to 0^+$
		\begin{align*}
			a_+(\xi+i\eps)&\longrightarrow \frac{1}{2}\ln \mathcal{A}(\xi)+\frac{1}{2\pi i}\pv \int_{\R} \frac{\ln \mathcal{A}(\theta)}{\theta-\xi}\d \theta, \\
			a_-(\xi-i\eps)&\longrightarrow \frac{1}{2}\ln \mathcal{A}(\xi)-\frac{1}{2\pi i}\pv \int_{\R} \frac{\ln \mathcal{A}(\theta)}{\theta-\xi}\d \theta.
		\end{align*}
		
		Thus, the functions $\mathcal{A}_+\coloneq  e^{a_+}$, $\mathcal{A}_-\coloneq   e^{a_-}$ factorize the symbol $\mathcal{A}(\xi)=\mathcal{A}_+(\xi) \mathcal{A}_-(\xi)$ on the real line $\xi\in \R$. \smallskip
		
		\textit{Step 3.} We define the tempered distribution $J\coloneq   \mathcal{A}_-(\xi)\sF u$ on the real line. This auxiliary distribution is the key to our result. Note that we can extend it continuously to $\C_-$. 
		
		\textit{Claim 3.} The map $J$ defines a tempered distribution. 
		
		In order to prove this claim, it is sufficient to prove the boundedness $J[\phi]\le \norm{\phi}_{\sS(\R)}$.  Let $\phi\in \sS(\R)$. Note that $\mathcal{A}_-=\sqrt{\mathcal{A}}$ satisfies the two-sided bound $|\mathcal{A}_-(\xi)|\approx \abs{\xi}^s$ on the real line $\xi \in \R$. We define two auxiliary functions. $B(\xi)= \mathcal{A}_-(\xi)/(1+\abs{\xi})^4$ and $\eta(\xi)\coloneq   (1+\abs{\xi})^4 \phi(\xi)$. Clearly, $B\in L^1(\R)$ and, thus, $\hat{B}\in L^\infty(\R)$. Then
		\begin{align*}
			J[\phi]&= \int_{\R} u(x) \widehat{ \mathcal{A}_- \phi }(x)\d x=  \int_{\R} u(x) \hat{ B} \ast \hat{ \eta }(x)\d x\\
			&\le C\int_{\R} (1+\abs{x})^{2s-\eps}\int_{\R} |\hat{B}(y)|\, |\hat{\eta}(x-y)| \d y \d x\\
			&\le C \norm{\hat{\eta}}_{L^\infty(\R)} \int_{\R} (1+\abs{x})^{2s-\eps}\int_{\R} |\hat{B}(y)|\, (1+\abs{x-y})^{-4} \d y \d x.
		\end{align*}
		Now, a minor calculation yields
		\begin{equation*}
			\int_{\R} \frac{(1+\abs{x})^{2s-\eps}}{(1+\abs{x-y})^{4}}\d x\le C\frac{1}{(1+\abs{y})^{3-2s+\eps}}.
		\end{equation*}
		Thus, we deduce by the continuity of the Fourier transform on the space of Schwartz functions
		\begin{align*}
			J[\phi]&\le C \norm{\eta}_{L^1(\R)} \int_{\R} (1+\abs{y})^{2s-\eps-3}|\hat{B}(y)|\d y \\
			&\le C  \norm{\phi}_{\sS(\R)} \norm{B}_{L^1(\R)}\int_{\R} (1+\abs{y})^{2s-\eps-3}\d y.
		\end{align*}
		This proves the claim. 
		
		\textit{Claim 4.} The map $I\coloneq   \mathcal{A}_+(\xi)^{-1} \sF f$ defines a tempered distribution. 
		
		The proof is analogous to that of claim 3. 
						
		\textit{Claim 5.} In sense of tempered distributions, the equality $J=I$ holds.
		
		Since both $J$ and $I$ define tempered distributions by claim 3 and claim 4, the claim follows directly from claim 2. 
		
		By claim 5, the identity $J= \mathcal{A}_-(\xi)\sF u = 1/\mathcal{A}_+(\xi)  \sF f$ holds. Thus, the distribution $J$ can be extended analytically to $\C_+$ and $\C_-$. 
		
		\textit{Claim 6.} There exists a function $w:\R\setminus\{0\}\to \R$ and two complex numbers $a_0$ and $a_1$ such that the tempered distribution $\sF u$ is given by $w$ outside of the origin, i.e., 
		\begin{equation*}
			\sF u[\phi]= \int w(\xi)\phi(\xi)\d \xi + \sum_{k=0}^{\lfloor 2s-\eps \rfloor}a_k\phi^{(k)}(0).
		\end{equation*} 
		Moreover, the function $w$ satisfies 
		\begin{align*}
			|w(\xi)| \le C \abs{\xi}^{-1-2s+\eps} \text{ for small }\xi,\\
			|w(\xi)| \le C (1+\abs{\xi})^{\eps-2s} \text{ for large }\xi. 
		\end{align*}
		
		This claim directly follows from \autoref{lem:distribution-estimates}. \smallskip 
		
		\textit{Claim 7.} The tempered distribution $\xi J$ satisfies 
		\begin{equation*}
			\xi J [\phi]= \xi \mathcal{A}_-(\xi)\sF u[\phi]= \int w(\xi)\mathcal{A}_-(\xi)\xi \phi(\xi)\d \xi.
		\end{equation*}
		
		This claim is a direct consequence of claim 6. Note that
		\begin{equation*}
			\mathcal{A}_-(\xi)\xi \delta_0=0
		\end{equation*}
		and 
		\begin{equation*}
			\mathcal{A}_-(\xi)\xi \delta_0'= -\mathcal{A}_-(\xi)\delta_0=0.
		\end{equation*}
		Here, we used that distributions satisfy the product rule and, thus, $\xi \delta_0' = -\delta_0$.\smallskip

		\textit{Claim 8.} The function $\xi \mapsto \xi \mathcal{A}_-(\xi) w(\xi)$ has a holomorphic extension to $\C$. 
		
		Note that the distribution $J$ has an analytic extension to $\C\setminus \R$ which is continuous on $\C\setminus \{0\}$ and $J=\xi \mathcal{A}_-(\xi)w(\xi)$ on $\R\setminus \{0\}$. Thus, the function $\xi \mathcal{A}_-(\xi) w(\xi)$ has a meromorphic extension with a possible hole at $\xi=0$. Due to the estimate for small $\xi$
		\begin{equation*}
			|\xi \mathcal{A}_-(\xi)w(\xi)|\le C |\xi|^{\eps-s}
		\end{equation*} 
		the order of the pole at the origin is weaker than $\xi^{-1}$. Due to Riemann's theorem on removable singularities, see e.g. \cite[Chapter V, Theorem 1.2]{Con78}, the singularity is removable and we obtain a holomorphic function on the full space $\C$. \smallskip
				
		
		We are in the position to finalize the proof. Note that $\tilde{w}(\xi)\coloneq  \xi \mathcal{A}_-(\xi)w(\xi)$ is a holomorphic function with sublinear growth, due to 
		\begin{equation*}
			|\tilde{w}(\xi)|\le C(1+|\xi|)^{1+\eps-s}\le C(1+|\xi|)^{1-s/2}
		\end{equation*}
		where we used \eqref{eq:v-N-estimate} and $\eps<s/2$. Due to Cauchy's inequalities, see e.g. \cite[Corollary 4.3]{StSh03}, we know that 
		\begin{equation*}
			|\tilde{w}'(\xi)|\le \frac{\sup_{z\in \partial B_R(\xi)}|\tilde{w}(z)|}{R}\le C \frac{(R+\abs{\xi})^{1-s/2}}{R}\to 0  
		\end{equation*}
		in the limit $R\to \infty$. Clearly, we find, see e.g. \cite[Corollary 3.4]{StSh03}, a constant $c_1\in \C$ such that $\tilde{w}(\xi)=c$ which, by definition, yields
		\begin{equation*}
			w(\xi)= \frac{c_1}{\xi \mathcal{A}_-(\xi)}.
		\end{equation*}
		By the representation of $\sF u$ we know
		\begin{equation*}
			\sF u = \frac{c_1}{\xi \mathcal{A}_-(\xi)}+ \sum_{k=0}^{\lfloor 2s-\eps \rfloor}a_k\delta_0^{(k)}.
		\end{equation*}
		
		Using similar arguments for the solution $b$ from \autoref{prop:one-dim-sol-halfline}, we find three complex numbers $\tilde{a}_0, \tilde{a}_1, c_2$ such that
		\begin{equation}\label{eq:fourier-b-representation}
			\sF b = \frac{c_2}{\xi \mathcal{A}_-(\xi)}+ \sum_{k=0}^{\lfloor 2s-\eps \rfloor}\tilde{a}_k\delta_0^{(k)}.
		\end{equation}
		Since $b$ growths like $x_+^s$ the constant $c_2$ cannot be zero. Combining these two representations 
		\begin{equation*}
			\sF u = \frac{c_1}{c_2} \sF b + \sum_{k=0}^{\lfloor 2s-\eps \rfloor}\big(a_k- \frac{\tilde{a}_k}{c_2}\big)\delta_0^{(k)}
		\end{equation*}

		Since the Fourier transform is a bijection on the space of tempered distributions, see e.g. \cite[Theorem 2.2.14]{Gra14} and duality, we are left with 
		\begin{equation*}
			u = \frac{c_1}{c_2} b + \sum_{k=0}^{\lfloor 2s-\eps \rfloor}\tilde{c}_k\xi^k
		\end{equation*}
		for two complex numbers $\tilde{c}_0$ and $\tilde{c}_1$. Since $u=b=0$ on $\R_-$, both $\tilde{c}_0$ and $\tilde{c}_1$ need to be zero. 
	\end{proof}

	\section{Boundary estimate via dyadic decomposition}\label{sec:boundary-estimate}
	This section is dedicated to proving that solution to \eqref{eq:localised-equation} decay at the boundary of order $s$. The main results are \autoref{th:boundary-estimate} and \autoref{th:Cs-regularity}. Our method is an adaptation of the ideas in \cite{LaUr88} to the nonlocal setup in combination with the nontrivial half line solution from \autoref{sec:half-line}. The method entails a specific linearly approximation of the boundary $\partial\Omega$ near a boundary point in a dyadic scale, see \autoref{fig:geom-iteration}. This allows us to compare the solution in the curved domain iteratively to a nontrivial solution a carefully attached half space. The construction of such half space solutions is obtained in the following lemma by combining \autoref{lem:basic-properties-Gs} and \autoref{prop:one-dim-sol-halfline}.
	
	\begin{lemma}\label{lem:half-space-solution}
		Let $L\in \Gs$ and $\theta\in S^{d-1}$. We find a continuous distributional and strong solution $b_{H}:\R^d\to \R$ to 
		\begin{equation*}
			\begin{aligned}
				L b_H &= 0 \text{ in } \{ x\cdot \theta>0 \},\\
				b_H &=0 \text{ on }\{ x\cdot \theta\le 0 \}
			\end{aligned}
		\end{equation*}
		and a positive constant $C$ such that $C^{-1}(x\cdot \theta)_+^s\le b_H(x)\le C (x\cdot \theta)_+^s$. The function $b_H$ is constant in any direction orthogonal to $\theta$. 
	\end{lemma}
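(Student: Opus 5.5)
The plan is to reduce everything to the one-dimensional construction by means of \autoref{lem:basic-properties-Gs}. Given $L\in\Gs$ and $\theta\in S^{d-1}$, the first part of that lemma yields an operator $\tilde L\in\Gs$ in dimension one with $L[v(\cdot\cdot\theta)](x)=\tilde L v(x\cdot\theta)$. Concretely, $\tilde L$ is driven by the pushforward of $K$ under $h\mapsto h\cdot\theta$. Conditions \eqref{X2a}--\eqref{X2b} transfer with the same constants: the upper bound holds because $\{|h\cdot\theta|\in(r,2r)\}$ is covered dyadically by annuli $B_{2^{k+1}r}\setminus B_{2^kr}$ with $k\ge 0$, together with part of $B_{2r}$, which is controlled using \eqref{X2a} summed dyadically in the same way as in the proof of \autoref{prop:torsion_estimate}. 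The lower bound is immediate, since $\int_{|t|<r}t^2\tilde K(\d t)\ge\int_{B_r}|h\cdot\theta|^2K(\d h)$. Alternatively, one can simply observe that the symbol of $\tilde L$ is $t\mapsto\mathcal A(t\theta)\asymp |t|^{2s}$. Either way, I would only cite the lemma, possibly with constants adjusted, which is harmless.

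Next I apply \autoref{prop:one-dim-sol-halfline} to $\tilde L$ and obtain $b\in\dot C^s(\R)\cap C^{3s}_{\loc}(\R_+)$ solving \eqref{eq:one-dim-sol-halfspace} with $C^{-1}t_+^s\le b(t)\le Ct_+^s$. I then set $b_H(x):=b(x\cdot\theta)$. Continuity, the vanishing on $\{x\cdot\theta\le0\}$, the two-sided bound, and the constancy along directions orthogonal to $\theta$ all follow at once. The pointwise (strong) equation follows from the identity $Lb_H(x)=\tilde Lb(x\cdot\theta)=0$ for $x\cdot\theta>0$. This identity is legitimate because $b$ is $C^{3s}$ near $x\cdot\theta$ with $3s>2s$, and it grows like $|t|^s$, so the integral defining $Lb_H(x)$ converges absolutely. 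That convergence is exactly the computation behind the lemma.

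The only point needing care is the distributional formulation in $\R^d$, since $b_H$ is not integrable along the hyperplanes. For $\eta\in C_c^\infty(\{x\cdot\theta>0\})$, I would rotate coordinates so that $\theta=e_d$ and write $\int b_H L\eta\,\d x=\int_\R b(t)\big(\int_{\R^{d-1}}L\eta(x',t)\,\d x'\big)\d t$. Fubini is justified by $|b(t)|\le C|t|^s$ and the tail bound $|L\eta(x)|\lesssim(1+|x|)^{-d-2s}$. Then $\int L\eta(x',t)\,\d x'=\tilde L\bar\eta(t)$ with $\bar\eta(t)=\int\eta(x',t)\,\d x'\in C_c^\infty(\R_+)$, which one checks by integrating the kernel representation in $x'$. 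Hence the expression equals $\int b\,\tilde L\bar\eta=0$ by the distributional equation for $b$.

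Alternatively, since $b_H$ is a continuous strong solution with $C^{2s+}$ interior regularity and $s$-growth, one can integrate by parts directly using symmetry of $K$. I expect this Fubini/tail step to be the main technical obstacle, and it is routine.
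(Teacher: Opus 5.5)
Your proposal is the paper's proof: obtain a one-dimensional $\tilde L\in\Gs$ (with adjusted constants) from \autoref{lem:basic-properties-Gs}, take $b$ from \autoref{prop:one-dim-sol-halfline}, and set $b_H(x)=b(x\cdot\theta)$; your Fubini argument for the distributional formulation in $\R^d$ is a detail the paper leaves implicit. One correction there: the pointwise bound $|L\eta(x)|\le C(1+|x|)^{-d-2s}$ is false for general measures $K$ in $\Gs$ (e.g.\ kernels supported on the coordinate axes only give $|x|^{-1-2s}$ decay along those axes), so justify Fubini with the integrated bound $\int_{B_r^c}|L\eta|\,\d x\le Cr^{-2s}$ from Claim 4 in the proof of \autoref{prop:one-dim-sol-halfline}, which together with $|b_H(x)|\le C|x|^s$ and a dyadic decomposition gives $b_H\, L\eta\in L^1(\R^d)$.
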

	\begin{proof}
		By \autoref{lem:basic-properties-Gs}, we find an operator $\tilde{L}$ acting on one-dimensional functions adapted to the half space $\{x\cdot \theta>0\}$. Let $\tilde{b}$ be the function from \autoref{prop:one-dim-sol-halfline} adapted to $\tilde{L}$. We set $b_H(x)\coloneq \tilde{b}(x\cdot \theta)$. Combining \autoref{lem:basic-properties-Gs} and \autoref{prop:one-dim-sol-halfline}, we find for any $x\in \R^d$ such that $x\cdot \theta >0$
		\begin{equation*}
			L b_H(x)= \tilde{L} \tilde{b} (x\cdot \theta)= 0.
		\end{equation*}
		The properties of $\tilde{b}$ yield the remaining statements of the lemma.
	\end{proof}
	
	In the proof of the boundary estimate \autoref{th:boundary-estimate}, we compare a solution to \eqref{eq:localised-equation} with the solution from \autoref{lem:half-space-solution} using the maximum principle on different scales. Since the problem is nonlocal in nature, it is important to cut off the solution on the right scale in order to treat the tails appropriately. This technical ingredient is contained in the following lemma which deals with sub-$L$-harmonic functions near a flat boundary. 
	\begin{lemma}\label{lem:s-harmonic-measure-estimate}
		Let $L\in \Gs$, $0<r<1$, $\theta\in S^{d-1}$, and $D\subset B_r\cap \{x\cdot \theta> -H\}$ for some $H\in \R$. Assume that $w\in L_{2s-\eps}^\infty(\R^d)$ is a lower-semicontinuous distributional (or weak) subsolution to $L w \le 0$ in $D$ and $w\le 0$ on $\overline{B_{2r}}\setminus D$. Then, we find a positive constant $C=C(d,s,\lambda, \Lambda)$ such that for any $x\in \overline{B_{2r}}$
		\begin{equation*}
			w(x)\le C(x\cdot \theta+H)_+^s r^s\sup_{x\in D}\int_{B_{2r}(-x)^c} w_+(x+h) K(\d h).
		\end{equation*}
	\end{lemma}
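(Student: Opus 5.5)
The plan is to build a barrier from the torsion function of a slab. Set $M\coloneq \sup_{x\in D}\int_{B_{2r}(-x)^c} w_+(x+h)K(\d h)$, which is the tail of $w_+$ seen from points of $D$. If $M=\infty$ there is nothing to prove, so assume $M<\infty$.

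First I truncate. Let $\tilde w\coloneq w\1_{B_{2r}}$. For $x\in D$ we have
\begin{equation*}
L\tilde w(x)=Lw(x)+\int_{B_{2r}(-x)^c} w(x+h)K(\d h)\le M
\end{equation*}
in the distributional (or weak) sense, since $x+h\notin B_{2r}$ exactly when $h\in B_{2r}(-x)^c$. Moreover $\tilde w\le 0$ on $D^c$: on $\overline{B_{2r}}\setminus D$ by hypothesis, and outside $B_{2r}$ by construction. Using $\tilde w$ rather than $w$ avoids comparing with anything in the far region.

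Next I construct the barrier. By \autoref{lem:basic-properties-Gs}, a function of $x\cdot\theta$ only is acted upon by a one-dimensional operator $\tilde L\in\Gs$. Let $\tilde b_{R}$ be the torsion function of $\tilde L$ on the interval $(0,R)$ from \autoref{prop:torsion_estimate}, with $R\coloneq 4r+H$. Since $D\subset B_r\cap\{x\cdot\theta>-H\}$, the slab $\{-H<x\cdot\theta<4r\}$ contains $D$, and we may assume $H\le r$. Indeed, if $H>r$, the constraint $x\cdot\theta>-H$ is implied by $D\subset B_r$, so we may replace $H$ by $r$; since $(x\cdot\theta+H)_+\ge (x\cdot\theta+r)_+\ge$ (comparable to $r$) on the relevant region, the bound only improves.

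Define $\phi(x)\coloneq \tilde b_R(x\cdot\theta+H)$. Then $L\phi=1$ on the slab, and hence on $D$, with $\phi\ge0$ everywhere. Applying the weak maximum principle to $\tilde w-M\phi$, which is a subsolution in $D$ and nonpositive outside $D$, gives $\tilde w\le M\phi$. This is the step requiring care with the solution notions: a lower-semicontinuous distributional subsolution against a weak solution. I would either mollify, as in the proof of \autoref{lem:int_reg_for_u} where the weak maximum principle from \cite{FeRo24} is used, or cite the comparison principle for distributional subsolutions.

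Finally, \eqref{eq:torsion_estimate} gives
\begin{equation*}
\phi(x)\le C\Lambda R^s\,\big((x\cdot\theta+H)\wedge(R-x\cdot\theta-H)\big)_+^s\le C r^s(x\cdot\theta+H)_+^s,
\end{equation*}
using $R\le 5r$. For $x\in\overline{B_{2r}}$ we have $w=\tilde w$, and the claimed estimate follows. The constant depends only on $s$ and $\Lambda$, so in particular it does not depend on $H$ or $r$.

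The main obstacle is justifying the maximum principle for the given class of subsolutions and checking that the truncated tail term is well defined. The latter should follow from $w\in L^\infty_{2s-\eps}$ together with \eqref{X2a}, since the kernel integrated against the growth $(1+|h|)^{2s-\eps}$ converges at infinity.
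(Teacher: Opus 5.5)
Your proof is correct, but it uses a different barrier from the paper's.

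\textbf{The paper's barrier.} The paper takes the half-space solution $b_H$ of \autoref{lem:half-space-solution}, shifts it by $H$, and truncates it to $B_{4r}$. Since $Lb_H(\cdot+He_d)=0$ in $\{x_d>-H\}$, the truncation produces
\[
L[b_H(\cdot+He_d)\1_{B_{4r}}](x)=\int b_H(x+h+He_d)\1_{B_{4r}^c}(x+h)K(\d h)\ge c_1r^{-s}
\]
on $D$. The lower bound comes from a dyadic tail computation based on the nondegeneracy condition \eqref{X2b}. The paper then compares $w\1_{B_{2r}}$ with $c_1^{-1}Ar^s b_H(\cdot+He_d)\1_{B_{4r}}$.

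\textbf{Your barrier.} You use the torsion function of the slab $\{-H<x\cdot\theta<4r\}$, obtained from \autoref{prop:torsion_estimate} via \autoref{lem:basic-properties-Gs}. It satisfies $L\phi=1$ exactly.

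\textbf{What each route buys.} Your route is more economical. It needs neither the construction of $b$ in \autoref{prop:one-dim-sol-halfline} nor the tail lower bound from \eqref{X2b}, so it shows the lemma rests only on the torsion estimate. The price is that the slab width $R=4r+H$ enters through the factor $R^s$, so you need a case reduction in $H$. The paper's shifted barrier satisfies $b_H(x+He_d)\le C(x_d+H)_+^s$ and $L[\cdot]\ge c_1r^{-s}$ uniformly in $H$, because $x_d+H>0$ on $D$; it therefore needs no case distinction.

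\textbf{Fixes to the case reduction.}
\begin{itemize}
\item The only thing needed is the monotonicity $(x\cdot\theta+r)_+\le (x\cdot\theta+H)_+$ for $H\ge r$. Your parenthetical claim that $(x\cdot\theta+r)_+$ is comparable to $r$ on the relevant region is false on parts of $\overline{B_{2r}}$, e.g.\ where $x\cdot\theta=-2r$. It is not used, so drop it.
\item You should also dispose of $H\le -r$. In that case $D=\emptyset$ and the claim is trivial. This also guarantees $R=4r+H>3r>0$ in the remaining case, so the torsion function exists.
\end{itemize}

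\textbf{Maximum principle.} Your justification of the maximum principle for lower-semicontinuous distributional subsolutions is at the same level as the paper's, which invokes the same weak maximum principle from \cite{FeRo24}. If you prefer a compactly supported barrier, you may truncate $\phi$ to $B_{4r}$; this only increases $L\phi$ on $D$.

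\textbf{Constant dependence.} Scaling $K\mapsto cK$ shows that the upper torsion bound is governed by $\lambda^{-1}$ rather than $\Lambda$. Your constant is therefore $C(s,\lambda,\Lambda)$, not $C(s,\Lambda)$. This is harmless for the lemma as stated.
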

	\begin{proof}
		Since the class $\Gs$ is invariant under rotation, see \autoref{lem:basic-properties-Gs}, we may assume that $\theta=e_d$. Let $b_H$ be the solution from \autoref{lem:half-space-solution} such that $L b_H=0$ in $\R_+^d$. We also know from the same lemma that $C^{-1}(x_d)_+^s\le b(x)\le C(x_d)_+^s$ for all $x$. 
		
		Note that for $x\in D\subset B_r\cap \{x_d\ge -H\}$
		\begin{align*}
			L [b_H(\cdot + He_d) \1_{B_{4r}}](x)&= 0+ \int_{\R^d} b_H(x+h+He_d)(1-\1_{B_{4r}}(x+h))K(\d h)\\
			&\ge C \int_{B_{5r}(0)^c} (h_d)_+^sK(\d h)\ge  C \sum_{k=3}^{\infty} \int_{B_{2^{k+1}r}(0)\setminus B_{2^kr}(0)} (h_d)_+^sK(\d h)\\
			&\ge  C \sum_{k=3}^{\infty} 2^{-(k+1)(2-s)}r^{s-2}\int_{B_{2^{k+1}r}(0)} (h_d)_+^2K(\d h)\ge C\lambda \sum_{k=2}^{\infty}2^{-sk}r^{-s}.
		\end{align*}
		Here, we used \eqref{X2b}. We have shown that $L[b_H(\cdot+He_d)\1_{B_{4r}}] \ge c_1 r^{-s}$ in $D$. Note that $b_H(\cdot + He_d) \1_{B_{4r}}$ is a weak and continuous distributional solution in $D$, too. Now, we calculate how the tail influences the calculations. Note that for $x\in D\subset B_r$
		\begin{equation*}
			L [w\1{B_{2r}^c}](x)\ge - \sup_{x\in B_r}\int_{B_{2r}(-x)^c}w_+(x+h)K(\d h)\eqcolon -A.
		\end{equation*}
		Let us define the function $W(x)\coloneq  c_1^{-1}Ar^sb_H(x+He_d)\1_{B_{4r}}(x)-w(x)\1_{B_{2r}}(x)$. Note that for $x\in D$ due to the above considerations
		\begin{equation*}
			L W(x)\ge A+ L [w\1_{B_{2r}^c}](x)\ge 0.
		\end{equation*}
		Moreover, on $D^c$ the function $W$ is nonnegative due to the assumption $w\le 0$ on $\overline{B_{2r}}\setminus D$. The maximum principle, see, e.g., \cite[Lemma 2.3.3 or Lemma 2.3.5]{FeRo24}, yields for any $x\in D$
		\begin{equation*}
			w(x)\le c_1^{-1}Ar^sb_H(x+He_d)\1_{B_{4r}}(x)\le C (x_d+H)_+^s r^s \sup_{x\in B_r}\int_{B_{2r}(-x)^c}w_+(x+h)K(\d h).
		\end{equation*}
	\end{proof}
	
	\begin{lemma}\label{lem:barrier-estimate}
		Let $L\in \Gs$, $r>0$, and $\Phi$ be the weak solution to 
		\begin{equation*}
			\begin{split}
				L \Phi&=0 \text{ in }B_{2r}\cap \{x_d>0\},\\
				\Phi &=1 \text{ on }B_{2r}^c\cap \{ x_d>0\},\\
				\Phi &=0 \text{ on }\{ x_d\le 0 \}.
			\end{split}
		\end{equation*}
		We find a constant $C$ such that for any $x\in B_r$
		\begin{equation*}
			\Phi(x)\le C \frac{(x_d)_+^s}{r^s}
		\end{equation*} 
		and for any $x\in B_{2r}$
		\begin{equation*}
			\Phi(x)\ge C^{-1}\frac{(x_d)_+^s}{r^s}.
		\end{equation*}
	\end{lemma}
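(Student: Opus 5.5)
By scaling (\autoref{lem:basic-properties-Gs}) we may take $r=1$: if $\Phi_r$ solves the problem for $L$ in $B_{2r}$, then $\Phi_r(r\cdot)$ solves the same problem in $B_2$ for the rescaled operator $L_r\in\Gs$. All constants below depend only on $d,s,\lambda,\Lambda$, so they are scale invariant. We also note once that $0\le\Phi\le 1$ by the weak maximum principle.

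\emph{Upper bound.} We apply \autoref{lem:s-harmonic-measure-estimate} with $D=B_1\cap\{x_d>0\}$, $H=0$, $\theta=e_d$ and $w=\Phi$.

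To check the hypotheses, set $r=1$ there. We know $L\Phi=0$ in $D\subset B_2\cap\{x_d>0\}$ and $\Phi=0$ on $\{x_d\le 0\}$. However, on $\overline{B_2}\setminus D$ in the upper half space, $\Phi$ need not be $\le 0$. To fix this, we apply the lemma at the smaller scale $r=1/2$ with $D=B_{1/2}\cap\{x_d>0\}$ and $w=\Phi\1_{B_1}+\Phi\1_{B_1^c}$. Better still, we directly replicate the proof of \autoref{lem:s-harmonic-measure-estimate}.

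Consider $W:=M\,b_H\1_{B_4}-\Phi$ in $D'=B_2\cap\{x_d>0\}$, where $b_H$ comes from \autoref{lem:half-space-solution}. The computation in that proof gives $L[b_H\1_{B_4}]\ge c_1>0$ in $B_2$, and on $B_4\setminus B_2$ we have $b_H\ge c>0$ away from $\{x_d=0\}$. This boundary layer is a problem, so instead we compare on $B_2\cap\{x_d>0\}$ with the function $V:=M(b_H\1_{B_4}+\1_{B_4^c\cap\{x_d>0\}}\cdot 0)$.

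The cleanest route is therefore to use $v:=C\big(\tilde b\1_{B_{3}}+\1_{\{x_d>0\}\setminus B_{3}}\big)$, where $\tilde b(x)=b_H(x)$. We then handle the exterior data $\Phi=1$ on $B_2^c\cap\{x_d>0\}$ as follows: on $(B_3\setminus B_2)\cap\{x_d>0\}$, the values $\Phi\le1$ are dominated only where $b_H\gtrsim1$. Hence we rather prove the upper bound first in $B_1$ by applying \autoref{lem:s-harmonic-measure-estimate} to $w=\Phi-\Phi\1_{B_{3/2}^c}$ restricted to scale $r=1$. Since $L(\Phi\1_{B_{3/2}})=-L(\Phi\1_{B_{3/2}^c})\le0$ in $B_1\cap\{x_d>0\}$ (the tail term is $-\int\Phi\1_{B_{3/2}^c}(x+h)K(dh)\le 0$), the function $w=\Phi\1_{B_{3/2}}$ is a subsolution in $D=B_1\cap\{x_d>0\}$. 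Its remaining obstruction is positivity on $\overline{B_2}\setminus D$ inside $B_{3/2}$.

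We remove this by a two-step argument. First, we show the crude bound $\Phi(x)\le C(x_d)_+^s$ in $B_{3/2}$ by comparing $\Phi$ in $B_2\cap\{x_d>0\}$ with $\psi:=1\wedge C'\big(b_H+c_2\tilde b_{\mathrm{tors}}\big)$. Alternatively, and more simply, we compare with $1-\Phi_0$ built from a translated half-space: for each boundary point $z\in\{x_d=0\}\cap B_{3/2}$, use $w_z:=\Phi\1_{B_{1/4}(z)}$ at scale $1/8$. Then \autoref{lem:s-harmonic-measure-estimate} applies with the tail bounded by $\int_{B_{1/4}^c}K\le C\Lambda$, using $0\le\Phi\le1$, because now $\overline{B_{1/4}(z)}\setminus D\subset\{x_d\le0\}$ when $D=B_{1/8}(z)\cap\{x_d>0\}$. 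The only remaining gap is that the lemma requires $w\le0$ on $\overline{B_{2r}}\setminus D$, which fails on the annulus part of the upper half. So we choose instead $w:=\Phi-\Phi\1_{B_{1/4}(z)^c}$ and apply the lemma with $D$ taken as the whole $B_{1/8}(z)\cap\{x_d>0\}$. Sub-harmonicity then holds, and the annulus obstruction is handled by the observation that the lemma's proof only needs $W\ge0$ off $D$, which holds since $b_H\1_{B_{4r}}\ge c$ there once $M$ is large and $x_d\ge r/2$. Away from a strip $\{x_d<1/16\}$ the bound $\Phi\le1\le C x_d^s$ is trivial. Covering $B_1\cap\{x_d<1/16\}$ by such balls gives $\Phi\le C(x_d)_+^s$. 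I expect this bookkeeping of the annulus and the cutoffs to be the main obstacle.

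\emph{Lower bound.} Let $v:=\big(b_H\1_{B_{1}}\big)/C_0$. In $B_2\cap\{x_d>0\}$, the set where we compare, for $x\in B_2$ we have $Lv(x)=-C_0^{-1}\int b_H(x+h)\1_{B_1^c}(x+h)K(dh)\le 0$, so $v$ is a subsolution. Outside the domain, $v\le\Phi$: on $\{x_d\le0\}$ both vanish, and on $B_2^c\cap\{x_d>0\}$ we have $v=0\le1$. Hmm — but on $B_2$ we need $v\le\Phi$ only off the domain, which is fine. The maximum principle gives $\Phi\ge v\ge c(x_d)_+^s$ on $B_1$. To reach $B_2$, we use instead $v=\big(b_H\1_{B_{3}}\big)\wedge$-free scaling with $C_0\ge\sup_{B_3}b_H$; then $v\le 1=\Phi$ on $(B_3\setminus B_2)\cap\{x_d>0\}$ and $Lv\le0$ in $B_2\cap\{x_d>0\}$. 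This yields $\Phi\ge c(x_d)_+^s$ on $B_2$.
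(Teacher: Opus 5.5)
The scaling reduction and $0\le\Phi\le1$ are fine. Both bounds, however, have genuine gaps.

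\textbf{Upper bound.} Your argument never closes, and its final version is circular. Every localized attempt needs $w\le0$ (or $W\ge0$) on the annulus $\overline{B_{2r}}\setminus B_r$ inside $\{x_d>0\}$, where all you know is $\Phi\le1$. You justify this by ``$b_H\1_{B_{4r}}\ge c$ once $x_d\ge r/2$'', but that says nothing about annulus points with $0<x_d<r/2$. At those points, $Mb_H\ge\Phi$ is precisely the estimate you are trying to prove. Moreover, $\Phi\1_{B_{1/4}(z)}$ is not a subsolution in $D$. Cutting off a nonnegative function outside a ball adds $+\int\Phi\1_{B_{1/4}(z)^c}(x+h)\,K(\d h)\ge0$ to $L$ inside, so it is a \emph{super}solution there.

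The missing idea is not to localize at all. Compare on the whole of $B_2\cap\{x_d>0\}$ with $\tilde b=\eta+M\,b_H\1_{B_4}$, where $\eta$ is smooth, $0\le\eta\le1$, $\eta=1$ on $B_2^c$ and $\eta=0$ on $B_{3/2}$. Then $|L\eta|\le c_2$, while $L[b_H\1_{B_4}]\ge c_1^{-1}$ in $B_2\cap\{x_d>0\}$ by the computation in \autoref{lem:s-harmonic-measure-estimate}. Hence $L\tilde b\ge0$ for $M$ large, and $\tilde b\ge\Phi$ off the domain is trivial. The annulus now lies inside the comparison domain and needs no a priori information. On $B_{3/2}$ the comparison gives $\Phi\le Mb_H\le C(x_d)_+^s$. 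This is the paper's barrier. Your rejected candidate $C(\tilde b\1_{B_3}+\1_{\{x_d>0\}\setminus B_3})$ was close. What it lacked was a term that is $\ge1$ on all of $B_2^c$, including near $\{x_d=0\}$, yet has bounded $L$ in $B_2$.

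\textbf{Lower bound.} There is a sign error. For $x\in B_1\cap\{x_d>0\}$ one has $L[b_H\1_{B_1}](x)=Lb_H(x)+\int b_H(x+h)\1_{B_1^c}(x+h)\,K(\d h)\ge0$. So your $v$ is a supersolution there; this is the same fact $L[b_H\1_{B_4}]\ge c_1>0$ that you invoke in the upper bound. Comparison with the $L$-harmonic $\Phi$ therefore yields nothing, and the $B_3$ variant fails for the same reason.

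To build a subsolution you need a strictly negative term that absorbs the bounded positive truncation error of $\kappa b_H$, and it has to come from the exterior datum $\Phi=1$. For instance, take $v=\kappa\,b_H\1_{B_3}+\1_{B_3^c\cap\{x_d>0\}}$. For $x\in B_2\cap\{x_d>0\}$, symmetry of $K$ together with \eqref{X2a}--\eqref{X2b} gives $L[\1_{B_3^c\cap\{x_d>0\}}](x)\le -K(\{|h|>5,\ h_d>0\})\le-c<0$, while $L[b_H\1_{B_3}](x)\le C$. Thus $Lv\le0$ for $\kappa$ small. Also $v\le\Phi$ off $B_2\cap\{x_d>0\}$ as soon as $\kappa\sup_{B_3}b_H\le1$. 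The maximum principle then gives $\Phi\ge\kappa b_H\ge c(x_d)_+^s$ in $B_2$. The paper obtains the same negative term by truncating $\Phi$ itself at $B_4$, using $L[\Phi\1_{B_4}]\ge c_3^{-1}$ in $B_2\cap\{x_d>0\}$.
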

	\begin{proof}
		Since the class $\Gs$ is invariant under scaling, it suffices to prove the result for $r=1$. By truncating the solution $b(x_d)$, we find a constant $c_1\ge 1$ such that, as in \autoref{lem:s-harmonic-measure-estimate}, for any $x\in B_{2}$ 
		\begin{equation*}
			L[b\1_{B_{4}}](x)\in [c_1^{-1},c_1].
		\end{equation*} 
		Fix a smooth function $\eta$ such that $\eta=1$ on $B_{2}^c$, $\eta=0$ in $B_{3/2}$, and $0\le \eta\le 1$. Then, $|L\eta|\le c_2 $ in $B_{2}$. Thus, the function $\tilde{b}\coloneq  \eta+ (1+c_2)/c_1b\1_{B_{4r}}$ satisfies $L\tilde{b}\ge 0 = L\Phi$ in $B_{2}\cap \{x_d>0\}$. Moreover, clearly $\tilde{b}\ge \eta\ge 1_{B_{2}^c\cap \{x_d>0\}}=\Phi$ on $(B_{2}\cap \{x_d>0\})^c$. The weak maximum principle, see, e.g., \cite[Lemma 2.3.3 or Lemma 2.3.5]{FeRo24}, completes the proof of the first inequality. In order to prove the lower bound, we truncate $\Phi\1_{B_2}$ as well. This yields $L\Phi\1_{B_4}\in [c_3^{-1},c_3]$. Due to \autoref{lem:half-space-solution}, we know that $b\ge c_4(x_d)_+^s$. Defining the function $w\coloneq (\Phi-(1+c_4+c_3)^{-1} c_1 2^{-s} b )\1_{B_2}$ we find 
		\begin{equation*}
			Lw\ge c_3^{-1}-(1+c_4+c_3)^{-1} 2^{-s}\ge 0 \text{ in }B_{2}\cap \{x_d>0\}
		\end{equation*}
		and $w\ge (\1_{\{x_d>0\}}- (1+c_4+c_3)^{-1} c_1 c_4 2^{-s} (x_d)_+^s  )\1_{B_2}\ge 0$ in $(B_{2}\cap \{x_d>0\})^c$. Again, the weak maximum principle, yields the desired estimate.
	\end{proof}
	
	The previous two lemmata allows us to tackle the desired boundary estimate which is essential for the proof of our main result \autoref{thm-intro2}.	
	\begin{theorem}\label{th:boundary-estimate}
		Let $L\in \Gs$ and $\Omega\subset \R^d$ be a $C^{1,\omega}$-domain with modulus $\omega$ satisfying \eqref{eq:s-dini}. For any $f\in L^\infty(\Omega\cap B_2)$, any continuous distributional or weak solution $u\in L_{2s-\eps}^\infty(\R^d)$ to \eqref{eq:localised-equation} satisfies the estimate 
		\begin{equation*}
			|u(x)|\le C |x-z|^s \Big( \|f\|_{L^\infty(\Omega\cap B_2)} + \sup_{x\in B_1}\int_{B_{2}(-x)^c} |u(x+h)|K(\d h) \Big)
		\end{equation*}
		for any $x\in B_1$, any $z\in \partial\Omega\cap B_{3/2}$, and some constant $C=C(\lambda, \Lambda, s,d,\omega, \Omega)$. 
	\end{theorem}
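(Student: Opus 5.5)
We first reduce to the case $f\equiv 0$ by subtracting a torsion-type barrier: by \autoref{prop:torsion_estimate} (or \autoref{lem:half-space-solution} truncated to $B_4$), a supersolution with $L\ge 1$ in $\Omega\cap B_2$, vanishing outside a slightly larger half space, and bounded by $C d^s$ near a flat exterior boundary, absorbs the term $\|f\|_{L^\infty}$. Alternatively, we carry $f$ through the iteration, since each dyadic step below incurs an additional error $\|f\|_\infty r_k^{2s}\le \|f\|_\infty r_k^s$. By symmetry ($u$ and $-u$) it suffices to bound $u$ from above. We fix $z\in\partial\Omega\cap B_{3/2}$, translate and rotate so that $z=0$ and the exterior paraboloid condition of \autoref{def:paraboloid} holds with inner normal $e_d$, and normalize $A:=\|f\|_\infty+\sup_{x\in B_1}\int_{B_2(-x)^c}|u(x+h)|K(\d h)$ to $1$. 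We may also take $|x|$ small, since for $|x-z|\ge r_0$ the bound follows from the $L^\infty$ estimate obtained by applying \autoref{lem:s-harmonic-measure-estimate} once at unit scale.

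The core is a dyadic iteration as in \autoref{fig:geom-iteration}. Set $r_k=2^{-k}r_0$ and choose flat exterior approximations: by the paraboloid property, $\Omega\cap B_{r_k}\subset\{x_d>-p_k\}$ with $p_k=r_k\omega(r_k)$. More precisely, we allow the normal $\theta_k$ to be tilted in order to absorb the $C^{1,\omega}$ deviation, but with the exterior paraboloid property at $0$ the fixed direction $e_d$ already works. The inductive claim is
\[
u(x)\le M_k\,(x_d+p_k)_+^s\qquad\text{in } B_{r_k}\cap\Omega .
\]
For the inductive step we apply \autoref{lem:s-harmonic-measure-estimate} in $D=\Omega\cap B_{r_{k+1}}$ (with $r=r_{k+1}$ and $H=p_{k+1}$) to the function $w=u-(\text{particular solution for } f)$. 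Its tail integral over $B_{2r_{k+1}}^c$ is split dyadically over the annuli $B_{r_j}\setminus B_{r_{j+1}}$, $j\le k$. On each annulus we use the inductive bound $u\le M_j(h_d+p_j)_+^s\le M_j r_j^s$, together with $K(B_{r_j}\setminus B_{r_{j+1}})\le \Lambda r_j^{-2s}$ from \eqref{X2a}; outside $B_{r_0}$ we use $A$. This yields
\[
M_{k+1}\le C r_{k+1}^{s}\Big(\sum_{j\le k} M_j r_j^{s} r_j^{-2s}+ r_0^{-2s}\Big)r_{k+1}^{0}\cdot(\text{normalization}).
\]
Matching powers, this naive form only gives $M_{k+1}\le C\sum_j 2^{-s(k-j)}M_j$, which is not summable with constant $1$.

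The key refinement, adapted from \cite{LaUr88}, is therefore to compare $u$ not with a pure tail estimate but with the half-space solution itself. On $B_{r_k}$ we write $u\le M_k b_{H_k}$ with $b_{H_k}(x)=b((x_d+p_k))$ from \autoref{lem:half-space-solution}. The function $M_k b_{H_k}-u$ is a supersolution of $L\ge -\|f\|$ in $\Omega\cap B_{r_k}$ which is nonnegative in $B_{r_k}$; its negative part lives only outside $B_{r_k}$, and there the discrepancy comes from the shift of the flat boundary from $-p_j$ to $-p_k$ on the larger balls. On $B_{r_j}$ this discrepancy is $M_j[(h_d+p_j)^s-(h_d+p_k)^s]\lesssim M_j p_j^s$ near the boundary strip. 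Applying \autoref{lem:s-harmonic-measure-estimate} to $u-M_k b_{H_k}\1_{B_{4r_k}}$ then gives the multiplicative recursion
\[
M_{k+1}\le M_k\big(1+C\omega(r_k)^s\big)+C r_k^{s}\|f\|_\infty ,
\]
where the factor $\omega(r_k)^s$ arises as $(p_k/r_k)^s$ after accounting for the tail decay $2^{-s(k-j)}$ and using the monotonicity of $\omega$. Since $\prod_k(1+C\omega(r_k)^s)<\infty$ exactly by the Dini condition \eqref{eq:s-dini}, $\sup_k M_k\le C$. Finally, given $x$ with $|x|\sim r_k$, we have $u(x)\le C(x_d+p_k)^s\le C r_k^s\le C|x|^s$, which is the claim.

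The main obstacle is obtaining this multiplicative recursion with increment $\omega(r_k)^s$ rather than an additive $O(1)$ gain per scale. This requires carefully controlling the tail of $u-M_kb_{H_k}$ across all coarser scales using the inductive bounds together with \eqref{X2a}, and using \autoref{lem:barrier-estimate} to handle the boundary-layer mismatch between consecutive half spaces.
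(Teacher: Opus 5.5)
Your architecture matches the paper's. You use exterior flat approximations on dyadic balls, compare with translates of $b$ via \autoref{lem:s-harmonic-measure-estimate}, bound the tails dyadically with $[b]_{C^s}$ and \eqref{X2a}, and close a multiplicative recursion with \eqref{eq:s-dini}. However, the inductive step, which you yourself flag as the main obstacle, does not close as written. Applying \autoref{lem:s-harmonic-measure-estimate} on $\Omega\cap B_{r_{k+1}}$ to $u-M_kb(x_d+p_k)$ gives at best $u\le M_kb(x_d+p_k)+CM_kN_k(x_d+p_{k+1})_+^s$. But $b(x_d+p_k)$ is not bounded by any multiple of $(x_d+p_{k+1})_+^s$ near $\{x_d=-p_{k+1}\}$, so the half space never moves.

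The missing idea is to compare with the \emph{new} translate and to absorb the boundary-data mismatch with the barrier inside the comparison function. The paper takes
\[
w=u-M_kb(x_d+p_{k+1})-M_k[b]_{C^s(\R)}(p_{k-1}-p_{k+1})^s\,\Phi_k-\Psi_k .
\]
Here $\Psi_k$ is the torsion function of $B_{r_{k+1}}\cap H_{k+1}$, and $\Phi_k$ is the barrier of \autoref{lem:barrier-estimate}, equal to $1$ on $H_{k+1}\setminus B_{r_{k+1}}$. On $\Omega\cap(B_{r_k}\setminus B_{r_{k+1}})$ the induction hypothesis and $b\in C^s$ give $u-M_kb(x_d+p_{k+1})\le M_k[b]_{C^s}(p_{k-1}-p_{k+1})^s$, so $w\le 0$ there and the lemma applies. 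The decay $\Phi_k\le C(x_d+p_{k+1})_+^s r_{k+1}^{-s}$ in $B_{r_{k+2}}$ then turns the mismatch into a factor $1+C\omega(r_{k-1})^s$. Likewise, $\Psi_k\le Cr_{k+1}^s(x_d+p_{k+1})_+^s$ yields the additive $Cr_k^s$.

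Two details come with this. First, the induction needs a one-scale lag, i.e.\ $u\le M_kb(x_d+p_k)$ on $\Omega\cap B_{r_{k+1}}$. Second, one needs $\Omega\cap B_{r_k}\subset\{x_d>-p_{k+1}\}$; otherwise $w=u$ at annulus points below the new hyperplane. With your $p_k=r_k\omega(r_k)$ this inclusion fails. The paper's $p_k=c_1\sum_{n\ge k}\delta_n$ with $c_1$ large provides it, using concavity: $r_k\omega(r_k)\le 4r_{k+1}\omega(r_{k+1})$. Alternatively, set $\Phi_k=1$ on all of $B_{r_{k+1}}^c$.

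Several secondary claims are also off, though none breaks the conclusion.

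\emph{The increment is not $C\omega(r_k)^s$.} On the annulus at scale $r_j$ the excess is at most $M_k[b]_{C^s}(p_j-p_{k+1})^s$; this needs $M_j$ nondecreasing. After weighting by $K$-mass $\lesssim r_j^{-2s}$, multiplying by $r_{k+1}^s$ and summing, you get $M_kN_k$ with $N_k=\sum_{n\le k}2^{-s(k-n)}\omega(r_n)^s$. Monotonicity of $\omega$ points the wrong way here, since $\omega(r_n)\ge\omega(r_k)$ for $n\le k$. Indeed, for $\omega(t)=\min\{t,1\}$ one gets $N_k=(k+1)\omega(r_k)^s$. The argument survives because exchanging sums gives $\sum_kN_k\le C\sum_n\omega(r_n)^s\le C\int_0^{r_0}\omega(t)^st^{-1}\,\d t$, so the discrete Gr\"onwall bound on $M_k$ still holds.

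\emph{Handling $f$.} Your first option, subtracting one global torsion barrier bounded by $Cd^s$, is circular in a curved domain: that bound is the theorem with $f\equiv 1$. So $f$ must be carried scale by scale through $\Psi_k$, as in your second option.

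\emph{The preliminary $L^\infty$ bound.} This bound is needed for $|x-z|\ge r_0$ and to start the induction. It does not follow from one application of \autoref{lem:s-harmonic-measure-estimate}, because $u$ need not be $\le 0$ on $\Omega\cap(B_{2r}\setminus B_r)$. A separate $L^\infty$ estimate is required, as in the paper.
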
	
	The following proof is inspired by \cite{LaUr88}. 
	
	\begin{proof}
		Let $z\in\partial\Omega \cap B_{3/2}$. Since the operator $L$ is translation invariant and the class $\Gs$ is invariant under rotations, we can assume that $z=0$ and that the inner normal vector at the boundary is $e_d$. Moreover, simply dividing by a constant, we may assume that 
		\begin{equation*}
			\|f\|_{L^\infty(\Omega\cap B_2)} + \sup_{x\in B_1}\int_{B_{2}(-x)^c}u_+(x+h)K(\d h)\le 1.
		\end{equation*} 
		
		\textit{Step 1.} Let $r_0$ be the localization radius of the domain and define dyadic radii $r_k\coloneq 2^{-k}r_0$ and localizations of the domain $D_k\coloneq  \Omega\cap B_{r_k}$. The geometric deviation of $\partial\Omega$ from the plane $\{x_d=0\}$ is bounded by $\delta_k\coloneq C\omega(r_k)r_k$ due to the regularity of the boundary of $\Omega$. We define shifts $p_k\coloneq c_1 \sum_{n=k}^{\infty}\delta_n$. The positive constant $c_1\ge 1$ will be chosen later. Note that the sum is finite. This choice guarantees the set inclusions
		\begin{equation*}
			D_{k+1}\subset D_{k}\subset H_{k}\coloneq \{x\in \R^d\mid x_d> -p_{k} \}\subset H_{k-1}.
		\end{equation*}  
		Our goal is to compare the solution $u$ to the nontrivial solution $b_H(x+p_k e_d)=b(x_d+p_k)$ in the half space $H_{k}$, where $b_H$ is the function from \autoref{lem:half-space-solution} satisfying $L b_H(x+p_ke_d )=0$ for $x\in H_k$ and $b_H(x+p_ke_d)=0$ else. 
		
		\textit{Step 2.} We want to inductively construct a nondecreasing sequence of number $M_k\in (0,\infty)$ such that \begin{equation}\label{eq:M-k-estimate}
			u(x)\le M_k b(x_d+p_k)\text{ for all }x\in D_{k+1}.
		\end{equation} 
		
		Note that the first two constants $M_0, M_1$ can simply be chosen using the fact that $u$ is bounded, i.e., 
		\begin{equation*}
			M_0\coloneq  \sup_{x\in D_1} C\frac{|u(x)|}{(p_{0}-p_{1})^s},\quad M_1\coloneq \max\{M_0, \sup_{x\in D_2} C\frac{|u(x)|}{(p_{1}-p_{2})^s}\}.
		\end{equation*}
		
		Now, we proceed inductively and assume that we have proven the estimate 
		\begin{equation*}
			u(x)\le M_j b(x_d+p_j) \text{ for all }x\in D_{j+1}
		\end{equation*}
		for all $j\in \{1,\dots, k\}$ and that $M_j$ is nondecreasing. We will prove the same estimate for $j=k+1$ in this step. We define the error function 
		\begin{equation*}
			w(x) \coloneq  u(x)- M_k b(x_d+p_{k+1})-M_k [b]_{C^s(\R)} (p_{k-1}-p_{k+1})^s\Phi_k(x)-\Psi_k(x),
		\end{equation*}
		where $\Psi_k$ is the solution to 
		\begin{equation*}
			\begin{split}
				L \Psi_k&=1 \text{ in }B_{r_{k+1}}\cap H_{k+1},\\
				\Psi_k&=0 \text{ on }\big(B_{r_{k+1}}\cap H_{k+1}\big)^c
			\end{split}
		\end{equation*}
		and $\Phi_k(x)$ is the solution to 
		\begin{equation*}
			\begin{split}
				L \Phi_k&= 0 \text{ in }B_{r_{k+1}}\cap H_{k+1},\\
				\Phi_k&= 0 \text{ on }H_{k+1}^c,\\
				\Phi_k&= 1 \text{ on }B_{r_{k+1}}^c\cap H_{k+1}.
			\end{split}
		\end{equation*}
		
		Clearly, this function solves $L w= f-1 \le 0$ in $D_{k+1}$. In order to apply \autoref{lem:s-harmonic-measure-estimate}, we need to ensure that $w\le 0$ in $B_{2r_{k+1}}\setminus D_{k+1}$. If $x\notin \Omega$, then this follows directly from $u(x)=0$ and the nonnegativity of all other functions involved. If instead $x\in \Omega\cap B_{2r_{k+1}}\setminus B_{r_{k+1}}$, then 
		\begin{align*}
			w(x)\le u(x)-M_kb(x_d+p_{k-1})+M_k\Big( b(x_d+p_{k-1})-b(x_d+p_{k+1}) -[b]_{C^s(\R)} (p_{k-1}-p_{k+1})^s \Big).
		\end{align*}
		Since $2r_{k+1}= r_{k}$ and $u(x)\le M_kb(x_d+p_{k-1})$ for $x\in D_k$, we proceed using the $C^s$-regularity of the function $b$: 
		\begin{equation*}
			w(x)\le M_k\Big( b(x_d+p_{k-1})-b(x_d+p_{k+1}) - [b]_{C^s(\R)} (p_{k-1}-p_{k+1})^s \Big) \le 0
		\end{equation*}
		This allows us to apply \autoref{lem:s-harmonic-measure-estimate} which yields for any $x\in B_{r_{k+1}}$
		\begin{equation}\label{eq:excess-estimate}
			w(x)\le Cb(x_d+p_{k+1})r_{k+1}^s\sup_{x\in D_{k+1}}\int_{B_{2r_{k+1}}(-x)^c}w_+(x+h)K(\d h).
		\end{equation}
		The next step comprises of bounding this excess tail term. 
		
		\textit{Step 3.} We set $\tilde{w}(x) \coloneq  u(x)- M_k b(x_d+p_{k+1})$. Since $\Phi_k$ and $\Psi_k$ are nonnegative, we know that $w(x)_+\le \tilde{w}(x)_+$. We define the tail error 
		\begin{equation*}
			T_k\coloneq \sup_{x\in D_{k+1}}\int_{B_{r_{k}}(-x)^c}\tilde{w}(x+h)_+K(\d h)
		\end{equation*}		
		and the dyadic rings $A_j\coloneq  B_{r_{j}}\setminus B_{r_{j+1}}$. Now, we decompose the tail dyadically 
		\begin{equation*}
			T_k\le \sup_{x\in D_{k+1}}\int_{B_{r_1}(-x)^c}\tilde{w}(x+h)_+ K(\d h)+\sum_{j=0}^{k-2} \int \1_{A_{j+1}}(x+h)\tilde{w}(x+h)_+ K(\d h).
		\end{equation*}
		Clearly, the first term is easily bounded by a constant, independent of $k$, using an $L^\infty$-estimate for $u$, see \cite[Proposition 2.3.11]{FeRo24}. By the induction, we know for any $j\in \{0,\dots, k-2\}$ and $x\in D_{k+2}$ that 
		\begin{align*}
			\I{j}&\coloneq \int \1_{A_{j+1}}(x+h)\tilde{w}_+(x+h)K(\d h)\\
			&\le \int \1_{A_{j+1}}(x+h)\big(M_{j} b(x_d+h_d+p_j)-M_k b(x_d+h_d+p_{k+1}) \big)_+K(\d h).
		\end{align*}
		Since the sequence $M_j$ is nondecreasing and $b\in C^s(\R)$, we estimate this from above by 
		\begin{equation*}
			\begin{split}
				\I{j}&\le [b]_{C^s} M_k \int \1_{A_j}(x+h)(p_j-p_{k+1})_+^sK(\d h)\\
				&\le  C [b]_{C^s} M_k\Big(\sum_{n=j}^{k}\delta_n\Big)^sK(B_{r_{j+2}}^c)\le  C M_k \sum_{n=j}^{k}r_n^s\omega(r_n)^sr_j^{-2s}. 
			\end{split}
		\end{equation*}
		Since $r_k$ are dyadic, summing this over all possible $j$ yields
		\begin{equation*}
			\sum_{j=0}^{k-2}\I{j}\le C M_k \sum_{n=0}^{k-2}r_n^s\omega(r_n)^s \sum\limits_{j=0}^{n} r_j^{-2s}\le C M_k \sum_{n=0}^{k}r_n^{-s}\omega(r_n)^s.
		\end{equation*}
		
		\textit{Step 4.} As in the proof of \autoref{lem:s-harmonic-measure-estimate}, using a truncation of $b(x_d+p_{k+1})$ and the maximum principle yields 
		\begin{equation*}
			\Psi_k(x)\le C r_{k+1}^s(x_d+p_{k+1})_+^s \text{ for all }x\in \R^d.			
		\end{equation*}
		By \autoref{lem:barrier-estimate},
		\begin{equation*}
			\Phi_k(x)\le C \frac{(x_d+p_{k+1})^s_+}{r_{k+1}^s}\text{ for all }x\in B_{r_{k+2}}.
		\end{equation*}
		
		\textit{Step 5.} Due to \eqref{eq:excess-estimate} and the tail estimates from step 3 and step 4, we find for any $x\in D_{k+2}$
		\begin{equation}
			u(x)\le \Big(M_k+C r_k^s+M_kC \frac{(p_{k-1}-p_{k+1})^s}{r_{k+1}^s} +CM_k \sum_{n=0}^{k}2^{-s(k-n)}\omega(r_n)^s\Big)b(x_d+p_{k+1}).
		\end{equation}
		This finally allows us to define the next constant in the induction proof, i.e., $M_{k+1}$, and the estimate \eqref{eq:M-k-estimate} follows. 
		
		\textit{Step 6.} Now, we aim to prove that the constants $M_{k}$ are bounded independently of $k$. By the relation $M_{k+1}= M_k\big(1+ C N_k\big)+C r_k^s$ where $N_k\coloneq  \sum_{n=0}^{k}2^{-s(k-n)}\omega(r_n)^s$, the discrete Grönwall's inequality yields
		\begin{align*}
			M_k &\le  M_0\exp\Big(C\sum_{j=0}^{k-1}N_j\Big) +C\sum_{i=0}^{k-1} r_i^s \exp\Big(C\sum_{j=i+1}^{k-1} N_j\Big)\\
			&\le \Big( M_0 + C\sum_{i=0}^{\infty} r_0^s2^{-si} \Big)\exp\Big(C\sum_{j=0}^{\infty}N_j\Big).
		\end{align*}
		Note that $\sum_{j=0}^{\infty}N_j$ is finite since
		\begin{align*}
			\sum_{j=0}^{\infty}N_j= \sum_{j=0}^{\infty} \sum_{n=0}^{j}2^{-s(j-n)}\omega(r_n)^s= \sum_{n=0}^{\infty}\omega(r_0 2^{-n})^s \sum_{j=n}^{\infty}2^{-s(j-n)}\le \frac{C}{s} \int_0^{r_0}\frac{\omega(t)^s}{t}\d t.
		\end{align*}
		This proves that $M_k$ is uniformly bounded by a constant $C$ independent of $k$. \smallskip
		
		This allows us to complete the proof. If $|x-z|\ge r_0$, then the proof trivially follows from an $L^\infty$-estimate, see \cite[Proposition 2.3.11]{FeRo24}. Let $x\in B_{r_0}(0)\cap \Omega$ and fix $k\in \N$ such that $2^{-k-2}r_0\le |x|\le 2^{-k-1}r_0$. Using the inequality \eqref{eq:M-k-estimate}, we know 
		\begin{align*}
			u(x)&\le M_k b(x_d+p_k)\le C \big(x_d+ Cr_0\sum_{n=k}^{\infty} \omega(r_02^{-n})2^{-n} \big)^s\le C \big(x_d+ C 2^{-k} r_0 \big)^s\\
			&\le C (x_d+|x|)^s\le C |x-z|^s.
		\end{align*}
		Repeating the argument with $-u$ instead of $u$ yields the result.
	\end{proof}

	Having the boundary estimate \autoref{th:boundary-estimate} at hand, allows us to prove a generalized version of the boundary regularity result in \autoref{thm-intro2} using standard tools. 
	\begin{theorem}\label{th:Cs-regularity}
		Let $L\in \Gs$ and $\Omega\subset \R^d$ be an open and bounded set that satisfies the exterior $C^{1,\omega}$-paraboloid property with a uniform radius at every boundary point $z\in \partial\Omega \cap B_{2}$ with a modulus of continuity $\omega$ that satisfies \eqref{eq:s-dini}. For any $f\in L^\infty(\Omega\cap B_2)$, any weak solution $u\in L_{2s-\eps}^\infty(\R^d)$ to \eqref{eq:localised-equation} belongs to $C^s(\overline{B_{1}})$ and satisfies the estimate
		\begin{equation*}
			\norm{u}_{C^s(B_1)}\le C\big(\norm{f}_{L^\infty(B_2)}+ \norm{u}_{L^\infty_{2s-\eps}(\R^d)} \big).
		\end{equation*}
		The constant $C$ depends on $s,\eps,\lambda,\Lambda$, and $\omega$. 
	\end{theorem}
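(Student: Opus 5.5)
The plan is to combine the boundary decay estimate of \autoref{th:boundary-estimate} with the interior regularity estimates of \cite[Theorem 2.4.3]{FeRo24} in the standard way. Normalize so that $\norm{f}_{L^\infty(B_2)}+\norm{u}_{L^\infty_{2s-\eps}(\R^d)}\le 1$. First check that the proof of \autoref{th:boundary-estimate} only used the exterior paraboloid property: in Step 1 we needed $D_k\subset H_k=\{x_d>-p_k\}$, which is exactly the exterior $C^{1,\omega}$ condition of \autoref{def:paraboloid} at $z$ (the rotation $R_z$ plays the role of the normal), with $\delta_k\le r_k\omega(r_k)$. So the decay bound
\[
|u(x)|\le C|x-z|^s\Big(1+\sup_{y\in B_1}\int_{B_2(-y)^c}|u(y+h)|K(\d h)\Big)
\]
holds for all $z\in\partial\Omega\cap B_{3/2}$ and $x\in B_1$. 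The tail term is bounded by a dyadic decomposition together with \eqref{X2a}: $\int_{|h|\ge 1}(1+|y+h|)^{2s-\eps}K(\d h)\le C\sum_k 2^{k(2s-\eps)}2^{-2sk}\le C/\eps$. Hence $|u(x)|\le C d_x^s$ for $x\in\Omega\cap B_{5/4}$, after taking $z$ to be the nearest boundary point; this point lies in $B_{3/2}$ whenever $d_x$ is small, and otherwise the $L^\infty$ bound \cite[Proposition 2.3.11]{FeRo24} suffices.

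Next, fix $x_0\in\Omega\cap B_1$ with $\rho:=d_{x_0}/2$ small. Rescale $v(y):=u(x_0+\rho y)$, which solves $L_\rho v=\rho^{2s}f(x_0+\rho\,\cdot)$ in $B_1$ by \autoref{lem:basic-properties-Gs}. Apply the interior estimate
\[
[v]_{C^s(B_{1/2})}\le C\big(\norm{v}_{L^\infty_{2s-\eps}(\R^d)}+\rho^{2s}\big).
\]
The key point is $\norm{v}_{L^\infty_{2s-\eps}}\le C\rho^s$. This follows because $|u(x_0+\rho y)|\le C(\rho+\rho|y|)^s$ for $x_0+\rho y\in B_{5/4}$, using the decay estimate and $d_{x_0+\rho y}\le \rho(2+|y|)$. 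For larger $|y|$, with $\rho|y|\gtrsim 1$, we use instead the global growth $|u|\le C(1+|x|)^{2s-\eps}\le C(\rho|y|)^{2s-\eps}$. Dividing by $(1+|y|)^{2s-\eps}$ gives at most $C\rho^s$ in the first region, and $C\rho^{2s-\eps}\le C\rho^s$ in the second, since $\eps<s$ may be assumed. Scaling back yields $[u]_{C^s(B_{\rho}(x_0))}\le C$ uniformly in $x_0$.

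Finally, glue the local bounds in the usual way. Take $x,y\in\Omega\cap B_1$ and suppose, say, $d_x\ge d_y$. If $|x-y|<d_x/2$, the interior bound applies on $B_{d_x/2}(x)$. Otherwise, $d_x,d_y\le 2|x-y|$, so $|u(x)-u(y)|\le |u(x)|+|u(y)|\le C(d_x^s+d_y^s)\le C|x-y|^s$. Points outside $\Omega$ have $u=0$ and are handled by the decay estimate directly. Points far from $\partial\Omega$ are handled by the interior estimate alone. Combined with $\norm{u}_{L^\infty(B_1)}\le C$, this gives the claim.

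The main obstacle I expect is the bookkeeping of the weighted tail norm under rescaling. One must make sure the growth exponent $2s-\eps$ does not destroy the $\rho^s$ scaling, and that the constant in \autoref{th:boundary-estimate} is uniform over all boundary points, which needs the uniform radius assumption. Checking that \autoref{th:boundary-estimate} really needs only the exterior property is routine but should be stated.
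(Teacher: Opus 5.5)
Your proposal is correct. Its overall architecture is what the paper means by ``combine a rescaled version of the interior regularity estimate with this boundary decay estimate'': a boundary decay bound $|u|\le C d_\Omega^s$, rescaled interior $C^s$ estimates, and the usual two-case gluing. Your bookkeeping $\|v\|_{L^\infty_{2s-\eps}(\R^d)}\le C\rho^s$ fills in what the paper leaves implicit; it uses $\eps<s$, the decay bound near $x_0$, and the global growth for $\rho|y|\gtrsim 1$.

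Where you genuinely differ is in how you get the decay under only the exterior paraboloid hypothesis.

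\emph{The paper's route.} It keeps \autoref{th:boundary-estimate} as a black box. At each $z$ it builds a $C^{1,2\omega}$ domain $\Omega_\omega\supset\Omega\cap B_1(z)$ that touches $\partial\Omega$ only at $z$. It then solves an auxiliary problem $Lv=\|f\|_{L^\infty}$ in $\Omega_\omega$, applies the boundary estimate to $v$, and transfers the bound to $u$ by the maximum principle. This gets uniformity in $z$ for free, since all the $\Omega_\omega$ are congruent copies of one model domain. The price is a comparison step that needs some care. In $\Omega_\omega\setminus\Omega$ one has $u=0$, but $Lu=-\int u(x+h)K(\d h)$ has no sign when $u$ changes sign. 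So $u$ need not be a subsolution there, and one should really compare $v$ with $u_+$.

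\emph{Your route.} You audit the proof of \autoref{th:boundary-estimate} and observe that the geometry enters only through the inclusion $\Omega\cap B_{r_k}\subset\{x_d>-p_k\}$. The only other inputs are $u=0$ off $\Omega$ and the inductive upper bounds used in the tail terms and in the final step. That inclusion is precisely the exterior condition, and the audit checks out. This avoids the auxiliary problem altogether. The cost is that you rely on the proof rather than the statement of \autoref{th:boundary-estimate}, and you must get uniformity in $z$ from the uniform radius, as you point out.
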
	
	\begin{proof}
		Let $z\in \partial\Omega \cap B_2$ be arbitrary. Using the paraboloid from \autoref{def:paraboloid}, we construct a $C^{1}$-domain $\Omega_\omega\subset B_{2}(z)$ with modulus $2\omega$ that contains $\Omega\cap B_{1}(z)$ and touches $\partial\Omega$ only in $z$. Let $v$ be the solution to 
		\begin{equation*}
			\begin{aligned}
				Lv&=  \|f\|_{L^\infty(\Omega\cap B_2)}  &\text{ in }\Omega_\omega,\\
				v&= u &\text{ on }\Omega_\omega^c.
			\end{aligned}
		\end{equation*}		
		An application of \autoref{th:boundary-estimate} yields for any $x\in B_{1}(z)$
		\begin{align*}
			|v(x)|&\le C |x-z|^s \Big( \|f\|_{L^\infty(\Omega\cap B_2)} + \sup_{x\in B_1(z)}\int_{B_{2}(-x)^c}|u(x+h)|K(\d h) \Big).
		\end{align*}
		The weak maximum principle yields $u\le v$ in $\Omega_\omega$.
		Since $z$ was arbitrary, the previous estimate holds for $d_{\Omega}(x)^s$ in place of $|x-z|^s$. The same estimate holds for $-u$ using the same ideas. Now, we simply combine a rescaled version of the interior regularity estimate \cite[Theorem 2.4.3]{FeRo24} with this boundary decay estimate.
	\end{proof}
	
	\begin{remark}
		The boundary regularity result \autoref{th:Cs-regularity} can be directly applied in order to obtain Hölder regularity of solution to the inhomogeneous Dirichlet-problem, see \cite[Section 2.6.7]{FeRo24} and \cite{Gru25}.
	\end{remark}
	
	\section{Hopf-type lemma}\label{sec:hopf}
	The flip side of \autoref{th:boundary-estimate} is the Hopf-boundary lemma. It allows us to describe the minimal decay of a positive solution to \eqref{eq:localised-equation} at the boundary. Although interesting on its own, it will be crucial in the proof of the boundary Harnack principle \autoref{th:boundary-harnack} and, more precisely, in the expansion near a boundary point, see \autoref{th:expansion}. The proof of this Hopf-type result follows roughly the same strategy as \autoref{th:boundary-estimate} but, instead of attaching half spaces outside of $\Omega$ on dyadic scales, we approximate the boundary $\partial\Omega$ near a boundary point from inside of the domain. This, again, allows us to compare a positive solution to the half space solutions from \autoref{lem:half-space-solution}. 
	
	\begin{theorem}\label{prop:hopf}
		Let $L\in \Gs$ and $\Omega\subset \R^d$ be a $C^{1,\omega}$-domain with modulus $\omega$ satisfying \eqref{eq:s-dini}. For any nonnegative, upper-semicontinuous distributional or weak super-solution $u\in L_{2s-\eps}^\infty(\R^d)$ to $L u \ge 0$ in $\Omega\cap B_2$, $u>0$ in $\Omega\cap B_2$, and $u\ge 0$ on $\Omega^c\cap B_2$, we find a constant $C$ such that $u(x)\ge Cd_{\Omega}(x)^s$ for all $x\in \Omega\cap B_1$. 
	\end{theorem}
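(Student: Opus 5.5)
We prove the Hopf bound by running the dyadic scheme of \autoref{th:boundary-estimate} with the half spaces placed inside $\Omega$ rather than outside. Fix $z\in\partial\Omega\cap B_{3/2}$ and, after a translation and rotation, take $z=0$ with inner normal $e_d$. Use $r_k=2^{-k}r_0$, $\delta_k=C\omega(r_k)r_k$ and $p_k=c_1\sum_{n\ge k}\delta_n$ as before, but now with shifted half spaces $H_k^-\coloneq\{x_d>p_k\}$. The interior paraboloid property gives $H_k^-\cap B_{r_k}\subset\Omega$. Since the $p_k$ decrease, $H_{k}^-\subset H_{k+1}^-$.

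We build a nonincreasing sequence $m_k>0$ with
\begin{equation*}
u(x)\ge m_k\, b(x_d-p_k)\quad\text{for } x\in B_{r_{k+1}}\cap H_k^-,
\end{equation*}
and then show $\inf_k m_k>0$.

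\textbf{Base step.} By the strong maximum principle $u>0$ in $\Omega\cap B_2$. Lower semicontinuity of the supersolution (or interior regularity, if $u$ is a solution) then gives $u\ge c_0>0$ on compact subsets of $\Omega\cap B_2$. This yields $m_0$ and $m_1$.

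\textbf{Induction step.} Consider
\begin{equation*}
w\coloneq m_k b(x_d-p_{k+1})-m_k[b]_{C^s}(p_{k-1}-p_{k+1})^s\,\Phi_k-u,
\end{equation*}
truncated to $B_{2r_{k+1}}=B_{r_k}$. Here $\Phi_k$ is the barrier from \autoref{lem:barrier-estimate}, adapted to $H_{k+1}^-$ and $B_{r_{k+1}}$. Then $Lw\le 0$ in $D_{k+1}\coloneq B_{r_{k+1}}\cap H^-_{k+1}$.

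On $B_{r_k}\setminus D_{k+1}$ we have $w\le 0$, for two reasons. Outside $H_{k+1}^-$, both $b$ and $\Phi_k$ vanish and $u\ge0$. In the annulus, the induction hypothesis combined with the $C^s$-bound on $b$ absorbs the shift from $p_{k-1}$ to $p_{k+1}$, exactly as in Step 2 of \autoref{th:boundary-estimate}.

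Now apply \autoref{lem:s-harmonic-measure-estimate}. The positive part of $w$ outside $B_{r_k}$ is controlled, on each annulus $A_{j+1}$, by
\begin{equation*}
\bigl(m_k b(x_d-p_{k+1})-m_j b(x_d-p_j)\bigr)_+\le m_k[b]_{C^s}(p_{k+1}-p_j)^s+(m_k-m_j)_+ b.
\end{equation*}
The second term vanishes because the sequence $m_j$ is nonincreasing. Also, $u\ge0$ everywhere in $B_2$, and its far tail outside $B_2$ enters only with a favorable sign. Summing over the annuli exactly as in Step 3 of \autoref{th:boundary-estimate} gives, on $B_{r_{k+2}}\cap H_{k+1}^-$,
\begin{equation*}
u\ge \bigl(m_k-Cm_k N_k\bigr)\,b(x_d-p_{k+1}),\qquad N_k=\sum_{n\le k}2^{-s(k-n)}\omega(r_n)^s.
\end{equation*}
The $\Phi_k$ contribution is also $\lesssim m_k N_k\, b$, by \autoref{lem:barrier-estimate} and the bound $(p_{k-1}-p_{k+1})^s\lesssim r_k^s\omega(r_k)^s$. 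Note that no inhomogeneous term arises, since $Lu\ge0$. So we set $m_{k+1}=m_k(1-CN_k)$.

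\textbf{Main obstacle.} We need $\prod_k(1-CN_k)>0$. This follows from $\sum_k N_k<\infty$, which is the Dini condition \eqref{eq:s-dini}. However, the individual factors may be negative for small $k$. To handle this, start the iteration at a large index $k_0$ chosen so that $C\sum_{k\ge k_0}N_k\le1/2$; this is possible because the sum converges. Since $r_{k_0}$ depends only on $\omega$, the base constant $m_{k_0}$ is still positive. The delicate point, which I expect to be the hardest, is exactly this bookkeeping: the tail sums must involve only the differences $p_j-p_{k+1}$, and the errors must be multiplicative in $m_k$.

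\textbf{Conclusion.} Given $x\in\Omega\cap B_1$ close to the boundary, let $z$ be its nearest boundary point. Then $x=z+d_\Omega(x)\nu_z$. Choose $k$ with $|x-z|\sim r_k$. Since $p_k\le Cr_k\omega(r_k)\ll r_k$, we get $x_d-p_k\ge d_\Omega(x)/2$, and hence $u(x)\ge m_\infty c\,d_\Omega(x)^s$. Points at distance comparable to $1$ from $\partial\Omega$ are covered by the interior positive lower bound.
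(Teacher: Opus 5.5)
Your proposal is correct and follows essentially the same route as the paper: half spaces $\{x_d>p_k\}$ placed inside $\Omega$, an error function built from $m_k b(x_d-p_{k+1})$ with a $[b]_{C^s}$-weighted $\Phi_k$ correction, the harmonic-measure lemma applied on $D_{k+1}$, and an annular tail sum controlled by the monotonicity of $m_j$ and the $C^s$ bound on $b$. From there you take the recursion $m_{k+1}=m_k(1-CN_k)$, started at a large index so that the Dini condition keeps the product positive. Your deviations are cosmetic: the sign-flipped $w$, indexing the induction on $B_{r_{k+1}}\cap H_k^-$ (which matches the half ball where the barrier lemma bounds $\Phi_k$ from above), and using $\prod(1-CN_k)\ge 1-C\sum N_k$ instead of the logarithm estimate; like the paper, your base step takes $\inf u>0$ on a compact subset of $\Omega$ for granted via semicontinuity.
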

	\begin{proof}
		The setup to this proof is in line with \autoref{th:boundary-estimate}. Again, let $r_0, r_k,\delta_k, p_k$, and $b$ be as in \autoref{th:boundary-estimate}. We assume that $z\in \partial\Omega$ is simply $z=0$ by translation invariance and define $H_k\coloneq \{x\in \R^d\mid x_d>p_k \}$ and $D_k\coloneq B_{r_k}\cap H_k$. Clearly, $D_k\subset \Omega\cap B_{r_k}$. 
		
		\textit{Step 1.} We aim to construct a nonincreasing sequence of positive numbers $m_k$ such that for all natural $j\le k$
		\begin{equation}\label{eq:induction-hopf}
			u(x)\ge m_j b(x_d-p_{j}) \text{ for all } x\in B_{r_{j}}.
		\end{equation}
		
		We will construct the sequence $m_k$ inductively and set the initial value later. Let's assume that \eqref{eq:induction-hopf} is true for all $1\le j\le k$. Next, we construct $m_{k+1}$. Let us define the error function for $k+1$
		\begin{equation*}
			w(x)\coloneq  u(x)-m_k b(x_d-p_{k+1})+ m_k[b]_{C^s(\R)}(p_{k}-p_{k+1})^s\Phi_k.
		\end{equation*}
		Here, $\Phi_k$ is the solution to the problem $L\Phi_k=0$ in $D_{k+1}$ and $\Phi_k=\1_{H_{k+1}}$ on $D_{k+1}^c$. By construction, the function $w$ is a super-solution in $D_{k+1}$. Our goal is to apply \autoref{lem:s-harmonic-measure-estimate} to $-w$ in $D_{k+1}$. In order to do so, we need to check that $w\ge 0$ on $\overline{B_{r_{k}}}\setminus D_{k+1}$. This is a direct consequence of \eqref{eq:induction-hopf} from the previous step $j=k$, i.e., $u(x)\ge m_k b(x_d-p_{k})$ for $x\in B_{r_k}$. This yields for $x\in \overline{B_{r_k}}\setminus D_{k+1}$
		\begin{equation*}
			w(x)\ge m_k b(x_d-p_{k})-m_k b(x_d-p_{k+1})+ m_k[b]_{C^s(\R)}(p_{k}-p_{k+1})^s\1_{H_{k+1}}(x)\ge 0
		\end{equation*}
		This allows us to apply \autoref{lem:s-harmonic-measure-estimate} to the function $-w$. This yields for any $x\in D_{k+1}$
		\begin{equation*}
			w(x)\ge -C (x_d-p_{k+1})^sr_{k+1}^s \sup_{x\in D_{k+1}} \int_{B_{r_{k}}(-x)^c} w_-(x+h)K(\d h).
		\end{equation*}
		
		\textit{Step 2.} We need to estimate the nonlocal tail. Clearly, the term 
		\begin{equation*}
			\sup_{x\in D_{k+1}} \int_{B_{r_{1}}(-x)^c} w_-(x+h)K(\d h)
		\end{equation*}
		is bounded by a constant. Again, we define the annuli $A_j\coloneq  B_{r_j}\setminus B_{r_{j+1}}$. Due to the inductive assumption and since $m_j$ is nonincreasing, we know for any $j\in \{1,\dots, k\}$ and $x\in D_{k+1}$
		\begin{align*}
			\int \1_{A_j}(x+h) w(x+h)_- K(\d h)&\le \int \1_{A_j}(x+h) \big( m_k b(x_d-p_{k+1})-m_jb(x_d-p_j)\big)_+ K(\d h)\\
			&\le m_k[b]_{C^s} \int \1_{A_j}(x+h) \big( p_j- p_{k+1}\big)^s K(\d h)\\
			&\le C m_k r_j^{-2s}\sum_{n=j}^{k}\omega(r_n)^sr_n^s.
		\end{align*}
		Summing this over all possible $j\in \{1,\dots, k\}$ yields for $x\in D_{k+1}$
		\begin{equation*}
			w(x)\ge - m_k C b(x_d-p_{k+1}) \sum_{n=0}^{k}2^{-(k-n)s}\omega(r_n)^s.
		\end{equation*}
		By the definition of $w$ and using the lower bound for $\Phi_k$ from \autoref{lem:barrier-estimate}, we find for $x\in D_{k+1}$
		\begin{align*}
			u(x)\ge m_k b(x_d-p_{k+1})\Big(1 - C\sum_{n=0}^{k}2^{-(k-n)s}\omega(r_n)^s\Big).
		\end{align*}
		This proves the inequality \eqref{eq:induction-hopf} for $j=k+1$ with the choice
		\begin{equation}\label{eq:m_k}
			m_{k+1}\coloneq  m_k\big(1 - C N_k\big), \qquad N_k\coloneq  \sum_{n=0}^{k}2^{-(k-n)s}\omega(r_n)^s.
		\end{equation}
		
		\textit{Step 3.} We need to prove that the nonincreasing sequence $m_k$ stays bounded away from $0$ for large $k$. By the relation \eqref{eq:m_k}, we know that for any $k\ge K\in \N$
		\begin{equation}\label{eq:m-k-at-K}
			m_{k}= m_K \prod_{n=K}^{k-1} \big(1 - C N_n\big).
		\end{equation}
		Note that the sequence $N_k$ is summable since
		\begin{equation*}
			\sum_{k=0}^{\infty}N_k = \sum_{n=0}^{\infty}\omega(r_n)^s \sum_{k=n}^{\infty}2^{-(k-n)s}\le C \int_{0}^{2r_0} \frac{\omega(t)^s}{t}\d t.
		\end{equation*}
		Thus, the sequence $N_k$ converges to zero. Let's pick $K\in \N$ such that $\big(1 - C N_k\big)$ is larger than $1/2$ for all $k\ge K$. This finally, allows us to pick the initial value $m_K$ to kick off the induction. Since $u$ is positive in $\Omega\supset\supset D_K$, the number $m_K\coloneq  \inf_{x\in D_{K}} u(x)/b(x_d-p_K)$ is nonzero. 
		
		The product in \eqref{eq:m-k-at-K} converges to a nonzero constant, if the logarithm of the product stays bounded away from $-\infty$. We calculate
		\begin{equation*}
			\ln \big(\prod_{n=K}^{k-1} \big(1 - C N_n\big)\big)= \sum_{n=K}^{k-1}\ln(1-CN_n).
		\end{equation*}
		Since $\ln(1-t)\ge -2t$ for $t\in (0,1/2)$, we estimate
		\begin{equation*}
				\ln \Big(\prod_{n=K}^{k-1} \big(1 - C N_n\big)\Big)\ge -C\sum_{n=K}^{k-1} N_n\ge - C \sum_{n=K}^{\infty}N_n>-\infty
		\end{equation*}
		In the last inequality, we used that $N_n$ is summable, as proven above. 
		
		\textit{Step 4.} This allows us to finalize the proof. Note that the choice of the boundary point $0\in \partial \Omega$ was arbitrary and the estimate \eqref{eq:m_k} also holds when translated and rotated to another boundary point $z\in \partial\Omega$. Let $x\in \Omega$. If $d_{\Omega}(x)>r_K/2$, then the estimate is trivial due to $u$ being positive in $\Omega$. Else, we let $z$ be the projection of $x$ onto the boundary $\partial\Omega$. Without loss of generality, we assume that the projection is $0\in \partial\Omega$ and let's assume that the domain is rotated such that the normal at $z$ is $e_d$. We fix $k\in \N$ such that $r_0 2^{-k-2}\le d_{\Omega}(x)\le r_0 2^{-k-1}$. By \eqref{eq:m_k}, we know
		\begin{equation*}
			u(x)\ge m_k b(x_d-p_{k}) \ge C m_{\infty} (x_d-p_k)^s\ge C m_{\infty}\Big(d_{\Omega}(x)-C \omega(r_k)r_k \Big)^s \ge C m_\infty d_{\Omega}(x)^s.
		\end{equation*}
		Here, we used that $\omega(r_k)$ is small. 		
	\end{proof}
	
	Let us formulate a generalized version of the Hopf result from \autoref{thm-intro2}.  
	\begin{theorem}\label{th:hopf}
		Let $L\in \Gs$, and $\Omega\subset \R^d$ be an open set that satisfies the interior $C^{1,\omega}$-paraboloid property with a uniform radius $r<2$ at every boundary point $z\in \partial\Omega\cap B_2$ with modulus $\omega$ that satisfies \eqref{eq:s-dini}. For any nonnegative, weak super-solution $u\in L_{2s-\eps}^\infty(\R^d)$ to 
		\begin{equation*}
			\begin{aligned}
				L u &\ge 0\text{ in }\Omega\cap B_2,\\
				u&>0\text{ in }\Omega\cap B_2,\\
				u&=0\text{ on }\Omega^c\cap B_2,
			\end{aligned}
		\end{equation*}
		we find a constant $C$ such that $u(x)\ge Cd_{\Omega}(x)^s$ for all $x\in \Omega\cap B_1$. 
	\end{theorem}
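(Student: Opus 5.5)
The plan is to reduce \autoref{th:hopf} to the dyadic argument already carried out in the proof of \autoref{prop:hopf}, run separately at each boundary point. That argument only used two facts: that $\Omega$ contains the interior region approximated by the half spaces $H_k=\{x_d>p_k\}$ on each dyadic ball $B_{r_k}$, and that $u$ is bounded below by a positive constant on a fixed compact set inside $\Omega$. The first fact is exactly what the interior $C^{1,\omega}$-paraboloid property provides, so we do not need the full $C^{1,\omega}$ regularity of $\partial\Omega$.

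Fix $x\in\Omega\cap B_1$ and let $z\in\partial\Omega$ be a closest boundary point, so $|x-z|=d_\Omega(x)$. If $d_\Omega(x)\ge c_0$ for a small fixed $c_0$, the bound follows from a positive lower bound for $u$ on $\{y\in\Omega\cap B_{3/2}: d_\Omega(y)\ge c_0\}$. This lower bound needs care, since $u>0$ alone does not give a uniform infimum. We get it from lower semicontinuity of the supersolution, or a weak Harnack-type lower bound, on a compact subset of $\Omega\cap B_2$.

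Otherwise $z\in\partial\Omega\cap B_{3/2}$. We translate $z$ to $0$ and rotate by $R_z$. Since $x-z$ is normal to $\partial\Omega$ and the paraboloid $\{x_d>|x'|\omega(|x'|)\}$ lies inside $\Omega$, the point $x$ lies on the $e_d$ axis. With $r_k=2^{-k}r$, $\delta_k=\omega(r_k)r_k$ and $p_k=c_1\sum_{n\ge k}\delta_n$, the set $D_k=B_{r_k}\cap H_k$ lies in the paraboloid region, hence in $\Omega\cap B_{r_k}$.

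We then repeat Steps 1--3 of the proof of \autoref{prop:hopf} verbatim:
\begin{itemize}
\item form the error function $w=u-m_kb(x_d-p_{k+1})+m_k[b]_{C^s}(p_k-p_{k+1})^s\Phi_k$;
\item apply \autoref{lem:s-harmonic-measure-estimate} to $-w$ on $D_{k+1}$, using $u\ge0$ outside $D_{k+1}$ and the inductive hypothesis on $B_{r_k}$;
\item bound the tails dyadically by $Cm_kr_j^{-2s}\sum_{n\ge j}\omega(r_n)^sr_n^s$, via \eqref{X2a} and $b\in C^s$;
\item use the lower bound on $\Phi_k$ from \autoref{lem:barrier-estimate}.
\end{itemize}
This gives the recursion $m_{k+1}=m_k(1-CN_k)$ with $\sum_k N_k\lesssim\int_0^{2r}\omega(t)^s/t\,dt<\infty$ by \eqref{eq:s-dini}. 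Hence $m_\infty\ge m_K\exp(-C\sum_{n\ge K}N_n)>0$, with $K$ depending only on $\omega$, $r$ and $s$.

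The main obstacle is uniformity in $z$. All the constants $C$, $K$ and $p_k$ depend only on $(d,s,\lambda,\Lambda,\omega,r)$, because the paraboloid radius is uniform. The initial value $m_K=\inf_{D_K}u/b(x_d-p_K)$ also has to be bounded below uniformly in $z$. We handle this by noting that $D_K$ lies at distance $\gtrsim p_K-p_{K+1}>0$ from $\partial\Omega$ only in its upper part, so we take the infimum in the weaker form $m_K\ge \inf\{u(y):y\in\Omega\cap B_{3/2},\,d_\Omega(y)\ge c(K)\}\cdot\|b\|^{-1}_{L^\infty(0,r)}$. For this to work, the induction must be started by comparing with a truncated barrier supported where $d_\Omega\ge c(K)$. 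Concretely, we replace the first step by \autoref{lem:s-harmonic-measure-estimate} applied to $m_Kb(x_d-p_K)\1_{\{x_d\ge p_K+\text{const}\}}$-type functions, using the interior positive lower bound. This is the point to check carefully.

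To conclude, choose $k$ with $r2^{-k-2}\le d_\Omega(x)\le r2^{-k-1}$. Then $x_d=d_\Omega(x)$ and $p_k\lesssim\omega(r_k)r_k\ll d_\Omega(x)$ for $k$ large, so $u(x)\ge m_\infty b(x_d-p_k)\ge c\,m_\infty d_\Omega(x)^s$.
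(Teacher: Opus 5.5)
This is essentially the paper's argument. The paper applies \autoref{prop:hopf} to an interior paraboloid $\mathcal{C}_z\subset\Omega$ at $z$, chosen so that $x_0\in\mathcal{C}_z$ with $d_{\mathcal{C}_z}(x_0)$ comparable to $d_\Omega(x_0)$. Since the dyadic argument only uses $B_{r_k}\cap H_k\subset\Omega$, this is the same as your rerun at $z$. One correction there: you should pick the admissible paraboloid axis adapted to $x$ (you only need $x_d\gtrsim d_\Omega(x)$), rather than appeal to a normal of $\partial\Omega$, which need not exist under an interior condition alone (think of the complement of a quarter plane).

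The ``main obstacle'' you flag is not one, and no truncated-barrier restart is needed. Taking $\omega>0$ (no loss), you have $p_K\ge c_1(\delta_K+\delta_{K+1})>\delta_K=\sup_{|x'|\le r_K}|x'|\omega(|x'|)$. So all of $\overline{D_K}$, not just its upper part, lies inside the paraboloid at height at least $p_K-\delta_K>0$ above its lower boundary, hence in a compact set $F\subset\Omega\cap B_2$ independent of $z$. Therefore $m_K=\inf_{D_K}u/b(x_d-p_K)\ge c\,r_K^{-s}\inf_F u>0$ uniformly in $z$. The base case $u\ge m_K b(x_d-p_K)$ on $B_{r_K}\setminus D_K$ is trivial, because $b(x_d-p_K)=0\le u$ there.
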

		
	\begin{proof}
		Let $x_0\in \Omega\cap B_1$ be arbitrary. We fix $z\in \partial\Omega \cap B_{2}$ and a $C^{1,\omega}$-paraboloid $\mathcal{C}_z$ with apex at $z$ such that $x_0\in \mathcal{C}_z$ and $d_{\mathcal{C}_z}(x_0)\le C d_{\Omega}(x_0)$. By \autoref{prop:hopf}, we know that $u(x)\ge C d_{\mathcal{C}_z}(x)^s$ in $\mathcal{C}_z\cap B_{1}(z)$. Since $d_{\mathcal{C}_z}(x_0)\le Cd_{\Omega}(x_0)$, the result follows. 
	\end{proof}
	
	\begin{remark}
		Instead of weak solution, \autoref{th:Cs-regularity} respectively \autoref{th:hopf} also hold for continuous distributional solutions respectively upper-semicontinuous distributional supersolutions. 
	\end{remark}
	
	\begin{proof}[{Proof of \autoref{thm-intro2}}]
		Combine \autoref{th:Cs-regularity} and \autoref{th:hopf}. 
	\end{proof}

	\section{Liouville in the half space}\label{sec:liouville}
	This section is dedicated to proving a higher order Liouville theorem in the half space. Although interesting on its own, it is a key ingredient in the proof of \autoref{th:expansion}. In order to prove \autoref{th:expansion}, we employ a standard method, i.e., a blow up / contradiction-compactness argument, see, e.g., \cite{RS14}, \cite{RS16}, \cite{RoWe24}, or \cite{SvWe25}. Such arguments yield solutions to \eqref{eq:problem-half-space} after the blow up which need to be classified in order to find a contradiction. 
	\begin{proposition}\label{prop:liouville-half-space}
		Let $L\in \mathcal{G}_s(\lambda, \Lambda)$ and $u:\R^d\to \R$ be a continuous distributional solution to 
		\begin{equation}\label{eq:problem-half-space}
			\begin{split}
				Lu &=0 \text{ in }\R_+^d,\\
				u&=0 \text{ on }(\R_+^d)^c
			\end{split}
		\end{equation}
		satisfying the growth bound $|u(x)|\le C (1+|x|)^{2s-\eps}$ for all $x\in \R^d$ and some $\eps>0$. Then, we find a constant $C$ such that $u(x)= Cb(x_d)$ where $b$ is the function from \autoref{lem:half-space-solution}. If $\eps>s$, then $C=0$. 
	\end{proposition}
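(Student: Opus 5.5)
The plan is to reduce to the one-dimensional Liouville theorem, \autoref{prop:liouville}, by showing that $u$ is independent of the tangential variables $x'$. This uses the standard incremental-quotient argument from the blow-up literature, driven by the boundary $C^s$ estimate \autoref{th:Cs-regularity}, and is iterated to lower the growth exponent.

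\textbf{Step 1: regularity of $u$ at every scale.} The operator and the domain $\R^d_+$ are invariant under the scaling $x\mapsto Rx$ (\autoref{lem:basic-properties-Gs}). Set $u_R(x)\coloneq R^{-(2s-\eps)}u(Rx)$ for $R\ge1$. Then $u_R$ solves \eqref{eq:problem-half-space} for some $L_R\in\Gs$, and $\|u_R\|_{L^\infty_{2s-\eps}}\le C$ uniformly in $R$. Applying \autoref{th:Cs-regularity} (the half space is trivially $C^{1,\omega}$ with $\omega\equiv0$) in $B_1$ and scaling back gives
\[
[u]_{C^s(B_R)}\le C R^{2s-\eps-s}=CR^{s-\eps}\qquad\text{for all } R\ge1 .
\]

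\textbf{Step 2: tangential differences.} Fix $h\in\R^{d-1}\times\{0\}$. Then
\[
v(x)\coloneq \frac{u(x+h)-u(x)}{|h|^s}
\]
is again a solution of \eqref{eq:problem-half-space}, since tangential translations preserve $\R^d_+$ and its complement. By Step 1, $|v(x)|\le C(1+|x|)^{s-\eps}$. Now $s-\eps<2s-\eps'$ for a suitable $\eps'>s$, so $v$ falls under the same hypotheses with a strictly smaller growth exponent.

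\textbf{Step 3: the degenerate case $\eps>s$.} First prove the last assertion: if $\eps>s$ then $u\equiv0$. Run Step 1 again: $[u]_{C^s(B_R)}\le CR^{s-\eps}\to0$ as $R\to\infty$. Hence $u$ is constant, and since $u=0$ in the lower half space, $u\equiv0$.

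\textbf{Step 4: conclusion for general $\eps$.} Apply Step 3 to $v$, which has growth exponent $s-\eps=2s-(s+\eps)$ with $s+\eps>s$. This gives $v\equiv0$ for every tangential $h$, so $u(x)=\tilde u(x_d)$. By \autoref{lem:basic-properties-Gs}, $\tilde u$ is a continuous distributional solution of the one-dimensional problem \eqref{eq:one-dim-sol-halfspace} for the projected operator $\tilde L\in\Gs$. It has growth $|t|^{2s-\eps}$, so \autoref{prop:liouville} gives $\tilde u=Cb$. In the case $\eps>s$ we already know $C=0$; this is also consistent with $b\asymp t_+^s$.

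\textbf{Main obstacle.} The delicate point is Step 1. \autoref{th:Cs-regularity} is stated for bounded domains and weak solutions, whereas $u$ is only a continuous distributional solution with polynomial growth. I would handle this as follows:
\begin{itemize}
\item localize to a bounded $C^{1,1}$ domain $\Omega$ with $\Omega\cap B_2=\R^d_+\cap B_2$;
\item invoke the remark that the estimates also hold for continuous distributional solutions, after a truncation of $u$ outside $B_{4}$ that produces a bounded right-hand side (as in the proof of \autoref{lem:int_reg_for_u});
\item check that the tail term is controlled, which holds because the growth exponent satisfies $2s-\eps<2s$.
\end{itemize}
The descent argument of Steps 2–3 should also be verified to need only the weaker growth $(1+|x|)^{s-\eps}$ near infinity, which is what the scaled estimate provides.
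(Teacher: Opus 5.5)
Your argument is correct, but it reaches tangential invariance by a different and shorter route than the paper.

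The paper takes first-order tangential difference quotients and bounds them using only a small H\"older exponent from \autoref{th:Cs-regularity}. It iterates $k$ times to make high-order tangential differences vanish. This gives only that $u$ is a polynomial of degree at most one in $x'$, $u=f(x_d)\cdot x'+g(x_d)$, so a separate argument is needed to kill $f$. For $2s>1$ that argument uses interior differentiability, the fact that $\partial_i u=f_i(x_d)$ solves the one-dimensional problem, \autoref{prop:liouville} applied to $\partial_i u$, and a growth comparison with $b(x_d)x_i$.

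You instead use the sharp exponent $s$ in the rescaled estimate $[u]_{C^s(B_R)}\le CR^{s-\eps}$. A single tangential quotient of order $s$ then already has growth exponent below $s$. The same rescaled estimate shows that any such solution has $[v]_{C^s(B_R)}\to0$, hence is constant and therefore zero. This gives $u(x)=\tilde u(x_d)$ in one step. You avoid the polynomial classification, the differentiability of $u$, and the second use of the one-dimensional Liouville theorem. You also get the vanishing for $\eps>s$ directly rather than by comparison with $b$. What the paper's route buys is that its polynomial step needs only some $C^{\eps}$-type boundary bound, not the optimal exponent; it pays for this with the extra argument for the linear term.

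One small fix is needed in your Step 1. If $\eps\ge 2s$, the normalization $R^{-(2s-\eps)}u(R\cdot)$ is not uniformly bounded as $R\to\infty$. Since the growth hypothesis with a given $\eps$ implies it with any smaller one, first replace $\eps$ by, say, $\min\{\eps,3s/2\}$; the rest of your argument goes through unchanged.
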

	
	In order to prove this Liouville theorem, we use standard methods and combine the one-dimensional version \autoref{prop:liouville} with the Hölder boundary regularity result \autoref{th:Cs-regularity}. The proof is very close to \cite[Theorem 6.2]{RoWe24}. 
	
	\begin{proof}[{Proof of \autoref{prop:liouville-half-space}}]
		The proof is done in two steps. In the first step, we prove that the solution $u$ is constant in tangential directions.
		
		\textit{Step 1.} Let $R>0$, $0<h<R/4$ and $\theta\in S^{d-1}$ be a vector parallel to the half space boundary, i.e., such that $\theta_d=0$. We define $v_h\coloneq   (u(x+h\theta)-u(x))/h$ and $u_R(x)\coloneq   u(Rx)$. Due to \autoref{th:Cs-regularity} we find $\eps>0$ such that
		\begin{equation*}
			\begin{split}
				\norm{v_h}_{L^\infty(B_{R/4})}&\le [u]_{C^{\eps}(B_{R/4+h})}\le R^{-\eps}[u_R]_{C^{\eps}(B_{1/2})}\\
				&\le C R^{-\eps}  \norm{u_R}_{L_{2s-\eps}^\infty}(\R^d)\le C R^{2s-2\eps}.
			\end{split}
		\end{equation*}
		Now, we repeat this process but instead of taking differences of $u$ we take differences of $v_h$. This yields after $k$-repetitions 
		\begin{equation*}
			\norm{\tau_k(u,h)}_{L^\infty(B_{R/4})}\le C R^{2s-(1+k)\eps}.
		\end{equation*}
		Here, $\tau_1(u,h)\coloneq  v_h$ and $\tau_{n+1}\coloneq  \tau_{1}(\tau_n(u,h),h)$. 
		If we choose $k$ sufficiently large, then this term converges to Zero as $R\to \infty$. Since $\theta$ was arbitrary, $u$ is a polynomial in the first $(d-1)$-coordinates. Due to the growth bound, we know that the polynomial is at most of order 1. Thus, we find $f:\R\to \R^{d-1}$ and $g:\R \to \R$ such that 
		\begin{equation*}
			u(x)= f(x_d)\cdot x'+g(x_d)
		\end{equation*}
		for all $x=(x',x_d)\in \R^d$. \smallskip
		
		\textit{Claim.} We claim that $f(x_d)=0$ for all $x_d\in (0,\infty)$.
		
		If $2s\le 1$, then the growth bound already yields the claim. Let's assume $2s>1$. By the interior regularity \cite[Proposition 2.4.4]{FeRo24}, we know that $u$ is differentiable in $\R_+^d$ and $f_i(x_d)=\partial_i u(x)$ for all $i\in \{1,\dots, d-1\}$. Moreover, the estimate \eqref{eq:int_reg_for_u} yields $\partial_i u\in L^1_{\loc}(\overline{\R_+^d})$. By the linearity of $L$, the function $\partial_i u$ is a distributional solution to $L\partial_i u =0$ in $\R_+^d$. Note that $g(x_d)=u(0,x_d)$ is continuous and, thus, $\partial_i u(x)=f_i(x_d)$ is continuous, too. We employ \autoref{lem:basic-properties-Gs} coupled with the one-dimensional Liouville theorem \autoref{prop:liouville}. This yields $\partial_i u(x)= c_i b(x_d)$, where $b$ is the one-dimensional function from \autoref{prop:one-dim-sol-halfline}, see \autoref{lem:half-space-solution}. This yields
		\begin{equation*}
			u(x)-u((x_1,\dots, x_{i-1},0,x_{i+1},\dots, x_d))= c_i b(x_d)x_i.
		\end{equation*}
		Since $b$ behaves like $(x_d)_+^s$, the growth bound on $u$ yields $c_i=0$ for all $i\in \{1,\dots, d-1\}$. This proves the claim and completes the first step. \smallskip
		
		\textit{Step 2.} We know that $u(x)=u((0,x_d))$ and due to \autoref{lem:basic-properties-Gs}, we can employ the one-dimensional Liouville theorem \autoref{prop:liouville}. This yields $u(x)=c b(x_d)$. Clearly, $c=0$ if $\eps>s$ since $u$ grows slower than $(1+|x|)^{s}$ but $b$ behaves like $(x_d)_+^s$ by \autoref{lem:half-space-solution}.
	\end{proof}

	\section{Boundary Harnack principle}\label{sec:bhp}	
	This section is dedicated to the proof of our second main result, the boundary Harnack principle \autoref{th:boundary-harnack}. The next theorem provides an expansion of a solution around a boundary point. This is a main ingredient in the proof of the boundary Harnack principle. 	
	\begin{theorem}\label{th:expansion}
		Let $L\in \Gs$, $0<\eps<s/2$, $\alpha\in (0,s)$, and let $\Omega\subset \R^d$ be a bounded $C^{1,\omega}$-domain with modulus $\omega$ that satisfies \eqref{eq:s-dini}. Moreover, we fix the function $g:\R^d\to \R$ that solves
		\begin{equation}\label{eq:comparison-fct}
			\begin{split}
				Lg 	&= 0\phantom{\1_{\Omega\setminus A}} \text{ in }B_2\cap \Omega,\\
				g	&= \1_{B_2^c}\phantom{0} \text{ on }(B_2\cap \Omega)^c.
			\end{split}
		\end{equation}		
		 Let $f\in L^\infty(\Omega\cap B_1)$ and $u\in L_{2s-\eps}^\infty(\R^d)$ be a continuous distributional or weak solution of the problem 
		\begin{equation*}
			\begin{split}
				Lu	&=f \text{ in }\Omega\cap B_1,\\
				u	&=0 \text{ on }\Omega^c\cap B_1.
			\end{split}
		\end{equation*}
		Then, we find a constant $C$ such that for any boundary point $z\in B_{1/2}\cap \partial\Omega$ we find a number $q_z\in \R$ bounded by $|q_z|\le C(\|u\|_{L_{2s-\eps}^\infty(\R^d)}  +\norm{f}_{L^\infty(\Omega \cap B_1)} )$ such that 
		\begin{equation*}
			|u(x)-q_z g(x)|\le C |x-z|^{s+\alpha} \big( \|u\|_{L_{2s-\eps}^\infty(\R^d)}  +\norm{f}_{L^\infty( \Omega \cap B_1)} \big)
		\end{equation*}
		for all $x\in \overline{B_{1/2}}$.
	\end{theorem}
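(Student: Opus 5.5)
We argue by the standard blow-up / contradiction-compactness scheme, as in \cite{RS16,RoWe24}, with the half-space Liouville theorem \autoref{prop:liouville-half-space} as the classification step. First we reduce to the following decay statement: for every boundary point $z\in B_{1/2}\cap\partial\Omega$ and every $r\in(0,1/4)$ there is $q_{z,r}\in\R$ with
\begin{equation*}
\|u-q_{z,r}g\|_{L^\infty(B_r(z))}\le C r^{s+\alpha}\big(\|u\|_{L_{2s-\eps}^\infty(\R^d)}+\|f\|_{L^\infty(\Omega\cap B_1)}\big).
\end{equation*}
From this, the usual telescoping argument gives the conclusion. We compare the estimates at radii $r$ and $r/2$ on $B_{r/2}(z)$. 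The Hopf lemma \autoref{th:hopf} applied to $g$ (note $g\ge0$, $g>0$ in $\Omega\cap B_2$, $Lg=0$) gives $g\gtrsim d_\Omega^s$. Together with \autoref{th:Cs-regularity} for $g$, this shows $\|g\|_{L^\infty(B_{r/2}(z))}\asymp r^s$. Hence $|q_{z,r}-q_{z,r/2}|\le Cr^{\alpha}$, so $q_{z,r}\to q_z$ geometrically, and we obtain $|u-q_zg|\le C|x-z|^{s+\alpha}$. The bound on $|q_z|$ follows from the case $r\sim1$ together with the upper bound $u\lesssim d^s$ from \autoref{th:boundary-estimate}.

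To prove the decay estimate, we argue by contradiction. Suppose there are sequences $L_k\in\Gs$, domains $\Omega_k$ with uniform $C^{1,\omega}$ character, points $z_k$, and solutions $u_k,f_k$ normalized by $\|u_k\|_{L^\infty_{2s-\eps}}+\|f_k\|_\infty\le1$ for which the estimate fails with constants $C=k$. Let $q_{k,r}$ be the $L^2(B_r(z_k))$-projection coefficient of $u_k$ onto $g_k$. We then define the monotone quantity
\begin{equation*}
\theta(r)=\sup_k\sup_{r'\ge r}(r')^{-s-\alpha}\|u_k-q_{k,r'}g_k\|_{L^\infty(B_{r'}(z_k))},
\end{equation*}
which tends to $\infty$ as $r\downarrow0$. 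Choose $r_m\to0$ and $k_m$ nearly attaining the supremum, and form the blow-ups
\begin{equation*}
v_m(x)=\frac{(u_{k_m}-q_{k_m,r_m}g_{k_m})(z_{k_m}+r_mx)}{r_m^{s+\alpha}\theta(r_m)}.
\end{equation*}
These satisfy $\|v_m\|_{L^\infty(B_1)}\ge1/2$, are orthogonal to the rescaled $g$ on $B_1$, and obey the growth bound $\|v_m\|_{L^\infty(B_R)}\le CR^{s+\alpha}$ for $R\ge1$. The growth bound comes from the dyadic comparison of the $q$'s exactly as in the telescoping step, using $g\asymp d^s$. Each $v_m$ solves $L_m v_m=r_m^{2s-s-\alpha}f/\theta\to0$ in the rescaled domain, which flattens to $\R^d_+$ because of the $C^{1,\omega}$ regularity. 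The uniform $C^s$ estimates from \autoref{th:Cs-regularity} (applied after rescaling, with $s+\alpha<2s-\eps$ controlling the tails) and interior regularity give compactness in $C_{\rm loc}$. We also use compactness of $\Gs$ under weak convergence of the L\'evy measures, which holds because the bounds \eqref{X2a}--\eqref{X2b} are preserved. Passing to the limit yields a continuous distributional solution $v$ of \eqref{eq:problem-half-space} for some $L\in\Gs$, with growth $(1+|x|)^{s+\alpha}$.

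We close the contradiction as follows. Since $s+\alpha<2s$, \autoref{prop:liouville-half-space} forces $v=cb(x_d)$. The rescaled $g_k$, divided by $r_m^s$ and suitably normalized, also converge to a multiple of $b(x_d)$. This again uses the Hopf and $C^s$ bounds on $g$, and the same Liouville theorem, since the rescaled $g$ grows at most like $|x|^s$. Orthogonality therefore passes to the limit and gives $c=0$, contradicting $\|v\|_{L^\infty(B_1)}\ge1/2$. We expect the main obstacle to be making the limit rigorous for the tails: we must justify passing $L_mv_m$ to the limit for functions growing like $|x|^{s+\alpha}$ with varying operators, controlling the tail integrals uniformly via \eqref{X2a}. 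A second delicate point is the convergence of the normalized rescaled $g$ to a nontrivial multiple of $b$, which is where the Hopf lower bound is essential.
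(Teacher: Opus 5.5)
Your proposal is correct and follows the paper's own route: the $L^2$-projection coefficients, the monotone quantity $\theta$, the growth bound for $v_m$ via the Hopf lower bound on $g$, the half-space Liouville theorem for both $v_\infty$ and the rescaled $g$, and the orthogonality contradiction. You also make explicit the telescoping reduction from the scale-by-scale decay estimate to the single coefficient $q_z$, which the paper leaves implicit. One small slip: $s+\alpha<2s-\eps$ need not hold under the hypotheses $\alpha<s$, $\eps<s/2$. This is harmless, since it suffices to bound $v_m$ uniformly in $L^\infty_{2s-\eps'}$ with $\eps'=\min\{\eps,s-\alpha\}$, which follows from your growth bound on $B_{1/r_m}$ together with $u\in L^\infty_{2s-\eps}$ outside.
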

	
	The proof of \autoref{th:expansion} follows the contradiction compactness arguments as in \cite{RS16}, \cite{RoWe24}, or \cite{SvWe25}. Since we already proved an optimal boundary decay result and a Hopf lemma, see \autoref{th:boundary-estimate} and \autoref{prop:hopf}, we are in the position to prove this expansion in $C^{1}$-domains with modulus $\omega$ that satisfies \eqref{eq:s-dini}. We only need to adapt the proof contained in \cite{SvWe25} slightly to fit our setup. 
	
	\begin{proof}
		Let us assume the contrary. Then we find a sequence of $C^{1,\omega_n}$-domains $\Omega_n$ with moduli $\omega_n$ that satisfy the property \eqref{eq:s-dini} uniformly, i.e., we find a constant $C$ such that for all $n$
		\begin{equation*}
			\int_0^1 \frac{\omega_n(t)^s}{t}\d t\le C. 
		\end{equation*} 
		Additionally, we find inhomogeneities $f_n$ and continuous solutions $u_n\in L_{2s-\eps}^\infty(\R^d)$ such that 
		\begin{equation*}
			\norm{f_n}_{L^\infty(\Omega_n\cap B_1)}+\norm{u_n}_{L_{2s-\eps}^\infty(\R^d)}=1
		\end{equation*}
		that solve
		\begin{equation*}
			\begin{split}
				L_nu_n&=f_n\phantom{0} \text{ in }\Omega_n\cap B_1,\\
				u_n&=0\phantom{f_n} \text{ on }\Omega_n^c\cap B_1
			\end{split}
		\end{equation*}
		and solutions $g_n$ to \eqref{eq:comparison-fct} in the domain $\Omega_n$. But, as in \cite{RS16}, 
		\begin{equation}\label{eq:infinite-term}
			\sup_{n\in \N} \sup_{z\in \partial\Omega_n\cap B_{1/2}}\sup_{0<r<1} \inf_{q\in \R} r^{-s-\alpha} \| u_n - q g_n \|_{L^\infty(B_r(z))}=\infty.
		\end{equation}
		Let $q_{n,z,r}$ be the $L^2(B_r(z))$-projection of $u_n$ over $g_n$, that is 
		\begin{equation*}
			q_{n,z,r}\coloneq   \frac{\int_{B_r(z)} u_n(x) g_n(x)\d x }{\norm{g_n}_{L^2(B_r(z))}^2}. 
		\end{equation*}
		
		We define the auxiliary function $\theta:[0,1] \to [0,\infty]$ via
		\begin{equation*}
			\theta(r)\coloneq  \sup_{n\in \N} \sup_{z\in \partial\Omega_n\cap B_{1/2}}\sup_{r<t<1} t^{-s-\alpha}\| u_n - q_{n,z,r} g_n \|_{L^\infty(B_t(z))}.
		\end{equation*}
		Clearly, $\theta$ is nonincreasing, finite on $(0,1)$, and $\theta(0)=\infty$. As in \cite{RS16} or \cite[Proof of Theorem 6.9]{RoWe24}, we find a sequence $r_m\to 0$ monotonically as $m\to \infty$, a subsequence $\{n_m\}$ such that $L_{n_m}\phi\to L_\infty\phi$ as $m\to\infty$ for any $\phi\in C_c^2(\R^d)$ and some $L_\infty\in \Gs$, and a convergent sequence of boundary points $z_m\in \partial\Omega_{n_m}\cap B_{1/2}$ with limit $z\in \overline{B_{1/2}}$ such that 
		\begin{equation}\label{eq:construction-sequence}
			r_{m}^{-s-\alpha}\| u_{n_m} - q_{n_m,z_m,r_m} g_{n_m}(x)\|_{L^\infty(B_{r_m}(z_m))}\ge \frac{\theta(r_m)}{2}
		\end{equation}
		for all $m\in \N$. Let us ease the notation and use the original sequence for the new subsequence, i.e., $m=n_m$, and write $q_m= q_{m,z_m,r_m}$. 
		
		Now, we are in the position to define our blow up sequence $v_m:\R^d\to \R$ by 
		\begin{equation*}
			v_m(x)\coloneq   \frac{u_{m}(z_m+r_mx)- q_m g_{m}(z_m+r_mx)}{r_m^{s+\alpha}\theta(r_m)}.
		\end{equation*}
		By the choice of $q_m$, we know that 
		\begin{equation*}
			\int_{B_1(0)}v_m(x)g_{m}(z_m + r_m x)\d x =0
		\end{equation*}	
		and, due to \eqref{eq:construction-sequence}, we know that 
		\begin{equation*}
			\norm{v_m}_{L^\infty(B_1(0))}\ge 1/2
		\end{equation*}
		for all $m\in \N$. 
		
		Next, we need to prove a growth bound on our blow up sequence $v_m$.\smallskip 
		
		\textit{Claim A.} We find a constant $C$ such that 
		\begin{equation*}
			\norm{v_m}_{L^\infty(B_R(0))}\le C R^{s+\alpha}
		\end{equation*}
		for any $m\in \N$ and any $1\le R\le 1/r_m$. \smallskip
		
		The proof of this claim follows with the same arguments as in \cite[(6.18)]{RoWe24}, \cite[Step 2 in proof of Proposition 2.7.3]{FeRo24}, or \cite[(3.16)]{SvWe25}. Note that we need to employ \autoref{prop:hopf} in order to establish $\|g_{m}\|_{L^\infty(B_r(z_m))}\ge Cr^s$. \smallskip
		
		The function $v_m$ satisfies an equation. For this, we define the sets
		\begin{equation*}
			D_m\coloneq   r_m^{-1}\big(-\{z_m\}+\Omega_{m}\big).
		\end{equation*}
		Due to \autoref{lem:basic-properties-Gs}, we find an operator $\tilde{L}_m\in \Gs$ such that for $x\in D_m\cap B_{1/r_m}(0)$
		\begin{equation}
			\tilde{L}_m v_m(x)= \frac{r_{m}^{s-\alpha}}{\theta(r_m)}L_{m}\big(u_{m}-q_mg_{m}\big)(z_m+r_mx)= \frac{r_m^{s-\alpha}}{\theta(r_m)}f_{m}(z_m + r_m x) 
		\end{equation}
		in the sense of distributions. This is also true for weak solutions since any weak solution is also a distributional solution, see \cite[Corollary 2.6.11]{FeRo24}. After passing to yet another subsequence, these operators $\tilde{L}_m$, too, converge to a limit operator $\tilde{L}_\infty\in \Gs$. The functions $x\mapsto r_m^{s-\alpha}f_{m}(r_m x)/\theta(r_m) $ converge to zero locally uniformly since $\|f_{m}\|_{L^\infty(\Omega_{m} \cap B_1 ) }\le 1$, $\theta(r_m)\to \infty$ as $m\to \infty$, and $\alpha<s$.
		
		This observation allows us to take limits. By \autoref{th:Cs-regularity}, we know that $\|v_m\|_{C^s(B_R)}\le C(R)$. The stability result \cite[Proposition 2.2.36]{FeRo24} yields a limit $v_\infty$ and a vector $\theta\in S^{d-1}$ such that 
		\begin{equation*}
			\begin{aligned}
				\tilde{L}_\infty v_\infty &= 0 \text{ in }\{ x\cdot \theta>0 \},\\
				v_\infty &= 0 \text{ on }\{x\cdot \theta\le 0\}
			\end{aligned}
		\end{equation*}
		and, due to claim A, the bound $|v_\infty(x)|\le C(1+|x|)^{s+\iota}$ holds for all $x\in \R^d$. The Liouville result \autoref{prop:liouville-half-space} yields $v_\infty = \kappa_1 b_\theta(x\cdot \theta)$ where $b_\theta$ is the one-dimensional solution \autoref{lem:half-space-solution} adapted to the rotation $\theta$. Since $\|v_m\|_{L^\infty(B_1(0))}\ge 1/2$, the constant $\kappa_1$ is not zero. 
		
		Note that the sequence of functions $g_m(x)\coloneq  r_m^{-s}g(z_m+r_mx)$ will also have a limit. They satisfy $\|g_m\|_{L^\infty(B_{1/r_m}(0))}\le C$ and 
		\begin{equation*}
			\begin{aligned}
				\tilde{L}_m g_m&= 0\phantom{r_m^{-s}\1_{B_{2/r_m}(0)^c}} \text{ in }\Omega_m \cap B_{1/r_m},\\
				g_m&= r_m^{-s}\1_{B_{2/r_m}(0)^c}\phantom{0}\text{ on }(\Omega_m \cap B_{1/r_m})^c
			\end{aligned}
		\end{equation*}
		Clearly, $|g_m(x)|\le C(1+|x|)^s$ for all $x\in \R^d$ and, due to \autoref{th:Cs-regularity}, the uniform bound $\|g_m\|_{C^s(R)}\le C(R)$ holds for all $0<R<1/(2r_m)$. Again, the stability result \cite[Proposition 2.2.36]{FeRo24} yields a locally uniform limit $g_\infty$ that solves
		\begin{equation*}
			\begin{aligned}
				\tilde{L}_\infty g_\infty= 0 \text{ in }\{ x\cdot \theta>0 \},\quad
				g_\infty= 0\text{ on }\{ x\cdot \theta\le 0 \}
			\end{aligned}
		\end{equation*}
		and satisfies $|g_\infty(x)|\le C(1+|x|)^s$. By the Liouville result, see \autoref{prop:liouville-half-space}, the identity $g_\infty(x)= \kappa_2 b_\theta(x\cdot \theta)$ holds. By the Hopf lemma, \autoref{prop:hopf}, we find a positive constant $C$ such that $r_m^{-s}g(z_m+r_m|x|\eta_{z_m})\ge C |x|^s$ for all $x\in B_{1/r_m}(0)$. Here, $\eta_{z_m}$ is the inward normal at $z_m\in \partial\Omega$. This inequality directly implies that the constant $\kappa_2$ cannot be zero. \smallskip
		
		By the choice of $q_m$ and $v_m$, we know 
		\begin{equation*}
			0= \int_{B_1(0)} v_m(x)r_{m}^{-s}g_{m}(z_m + r_m x)\d x \to \kappa_1\kappa_2 \int_{B_1(0)} b_\theta(x)^2\d x\ne 0.
		\end{equation*}
		This is a contradiction and, thus, the result follows. 
	\end{proof}
	
	\begin{lemma}\label{lem:quotient-regularity}
		Let $L,\eps,s,\Omega,\omega,g,f,$ and $u$ be as in \autoref{th:expansion}. For any $\alpha\in (0,s)$
		\begin{equation*}
			[u/g]_{C^\alpha(\Omega\cap B_{1/2})}\le C \big( \|u\|_{L_{2s-\eps}^\infty(\R^d)}  +\norm{f}_{L^\infty( \Omega \cap B_1)} \big).
		\end{equation*}
	\end{lemma}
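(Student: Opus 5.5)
We derive the Hölder bound for $u/g$ from the boundary expansion \autoref{th:expansion}, combined with the Hopf lemma \autoref{prop:hopf} (or \autoref{th:hopf}) and interior regularity, in the usual "expansion plus interior estimate" scheme. Normalize so that $\|u\|_{L^\infty_{2s-\eps}(\R^d)}+\|f\|_{L^\infty(\Omega\cap B_1)}\le 1$. The function $g$ from \eqref{eq:comparison-fct} satisfies $L g=0\ge0$ in $B_2\cap\Omega$ and $g>0$ there (strong maximum principle, since $g=1$ on $B_2^c$). Hence \autoref{prop:hopf} gives $g\ge c\,d_\Omega^s$ in $\Omega\cap B_1$, and \autoref{th:Cs-regularity} gives $g\le C d_\Omega^s$ and $[g]_{C^s}\le C$ near $\overline{B_{1/2}}$.

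\textbf{Step 1 (pointwise control near the boundary).} For $x\in\Omega\cap B_{1/2}$, let $z\in\partial\Omega$ be a nearest boundary point, so $|x-z|=d_\Omega(x)$. (If $d_\Omega(x)$ is not small, we use interior bounds instead.) By \autoref{th:expansion},
\[
\Big|\frac{u(x)}{g(x)}-q_z\Big|\le C\frac{|x-z|^{s+\alpha}}{d_\Omega(x)^s}=C d_\Omega(x)^\alpha .
\]
The same bound holds for any $y\in B_{2|x-z|}(z)\cap\Omega$ with $d_\Omega(y)\gtrsim |y-z|$. To handle points $y$ close to $\partial\Omega$ but far from $z$, we compare the two constants directly. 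For $z,z'\in\partial\Omega\cap B_{1/2}$ with $\rho=|z-z'|$, we evaluate both expansions at a point $y$ with $|y-z|\approx\rho$ and $d_\Omega(y)\approx\rho$, which exists by the $C^{1,\omega}$ regularity of the boundary. This gives $|q_z-q_{z'}|\,g(y)\le C\rho^{s+\alpha}$, and hence $|q_z-q_{z'}|\le C\rho^\alpha$ by Hopf.

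\textbf{Step 2 (interior oscillation).} Fix $x$ with $r:=d_\Omega(x)/2$ and let $z$ be as above. On $B_r(x)$ we write
\[
\frac{u}{g}-q_z=\frac{u-q_z g}{g},
\]
where $g\asymp r^s$ on $B_r(x)$. We rescale $B_r(x)$ to $B_1$ and apply the interior $C^s$ (or $C^{\alpha}$) estimate \cite[Theorem 2.4.3]{FeRo24} to $w:=u-q_z g$, which satisfies $Lw=f$ in $B_{2r}(x)$. Its $L^\infty$ size on $B_{2r}(x)$ is $\le C r^{s+\alpha}$ by \autoref{th:expansion}. For the tail, $|w(y)|\le C|y-z|^{s+\alpha}$ on $B_{1/2}$, and far away $w$ is controlled by the $L^\infty_{2s-\eps}$ norm and $|q_z|\le C$. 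Since $s+\alpha<2s$, the weighted tail at scale $r$ is $\lesssim r^{s+\alpha}$. This yields $[w]_{C^\alpha(B_{r}(x))}\le C r^{s}$ and $[g]_{C^\alpha(B_r(x))}\le Cr^{s-\alpha}$. The quotient rule then gives $[u/g]_{C^\alpha(B_{r}(x))}\le C$.

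\textbf{Step 3 (gluing).} For $x,y\in\Omega\cap B_{1/2}$ we split into two cases. If $|x-y|\le \frac12\max(d_\Omega(x),d_\Omega(y))$, Step 2 applies directly. Otherwise, choose nearest boundary points $z_x,z_y$ and write
\[
\Big|\frac{u(x)}{g(x)}-\frac{u(y)}{g(y)}\Big|\le \Big|\frac{u(x)}{g(x)}-q_{z_x}\Big|+|q_{z_x}-q_{z_y}|+\Big|q_{z_y}-\frac{u(y)}{g(y)}\Big|.
\]
Here $d_\Omega(x),d_\Omega(y),|z_x-z_y|\le C|x-y|$, so Step 1 bounds each term by $C|x-y|^\alpha$.

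The main obstacle is the tail bound in Step 2. It requires the uniform-in-$z$ expansion with constants controlled only by the normalized norms, and it needs a nonlocal estimate $\int_{B_r(x)^c}|w(y)|K(dy)\lesssim r^{\alpha-s}$, which we will get from a dyadic decomposition using \eqref{X2a} and $s+\alpha<2s$. We also need interior estimates for $L\in\Gs$ that allow measures; these hold by \cite[Theorem 2.4.3]{FeRo24}.
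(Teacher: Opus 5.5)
Your proposal is correct and follows the paper's proof. You apply rescaled interior regularity to $u-q_zg$ on the ball $B_{d_\Omega(x)/2}(x)$, with the expansion of \autoref{th:expansion} controlling its growth, to get $[u-q_zg]_{C^\alpha}\le Cr^s$. The quotient rule, together with the Hopf lower bound and the $C^s$ bound on $g$, then gives a uniform local estimate for $u/g$, and a chain argument glues these local estimates.

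The only differences are presentational, and one is a point of care in Step 2. You write out the gluing explicitly, including the H\"older continuity of $z\mapsto q_z$, where the paper only refers to a chain argument. You also control the tail by a dyadic integral estimate that only uses $\alpha<s$ and $\eps>0$, whereas the paper uses a weighted $L^\infty$ tail after assuming without loss of generality $\alpha>s-\eps$. If you use a weighted $L^\infty$ tail, the weight exponent must lie in $[\max(s+\alpha,2s-\eps),2s)$; the condition $s+\alpha<2s$ alone is not enough.
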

	The proof of this lemma is very close to \cite[Theorem 1.2]{RS16}.
	\begin{proof}
		Without loss of generality, we assume $\alpha>s-\eps$ and $ \|u\|_{L_{2s-\eps}^\infty(\R^d)}  +\norm{f}_{L^\infty( \Omega \cap B_1)} \le 1$. We fix $x_0\in \Omega\cap B_{1/2}$ and $r\coloneq  d_{\Omega}(x_0)/2$. Let $z_0$ be the projection of $x_0$ onto the boundary. Due to \autoref{th:expansion}, we find a constant $q=q(z_0)$ such that for all $x\in B_{r}(x_0)$
		\begin{equation}\label{eq:expansion-help}
			|u(x)-qg(x)|\le C |x-z_0|^{s+\alpha}.
		\end{equation}
		
		\textit{Claim A.} We claim that $[u-qg]_{C^{\alpha}(B_r(x_0))}\le C r^{s}$. 
		
		In order to prove this, we define the auxiliary function $v_r(x)= r^{-s}(u(x_0+rx)-qg(x_0+rx))$. Clearly, we find an operator $L_r\in \Gs$ such that $L_r v_r(x)= r^{2s}f(x_0+rx)$ for $x\in B_{3/2}(0)$. The estimate \autoref{th:expansion} with the knowledge $u\in L_{2s-\eps}^\infty(\R^d)$ and $g\in L^\infty(\R^d)$ yields
		\begin{equation}\label{eq:v-r-bound-infinity}
			\|v_r\|_{L^\infty(B_R)}\le C \begin{cases}
				r^{\alpha} R^{s+\alpha}&, rR<1,\\
				r^{s-\eps}R^{2s-\eps}&, rR\ge 1.
			\end{cases}
		\end{equation}		
		The interior regularity estimate \cite[Theorem 2.4.3]{FeRo24} yields
		\begin{equation*}
			[v_r]_{C^{\alpha}(B_{1})}\le C\Big( \norm{v_r}_{L^\infty_{s+\alpha}(\R^d)} +r^{s}\norm{f}_{L^\infty(B_{3r/2}(x_0))} \Big).
		\end{equation*}
		Note that $[v_r]_{C^{\alpha}(B_{1/2})}= r^{\alpha-s}[u-qg]_{C^{\alpha}(B_{r/2}(x_0))} $ and, using \eqref{eq:v-r-bound-infinity},
		\begin{equation*}
			\norm{v_r}_{L^\infty_{2s-\eps}(\R^d)}\le C\sup_{rR<1} r^{\alpha}\frac{R^{s+\alpha}}{(1+R)^{s+\alpha}}+ C \sup_{rR\ge 1} \frac{r^{s-\eps}R^{2s-\eps}}{(1+R)^{s+\alpha}} \le Cr^{\alpha}. 
		\end{equation*}
		This yields claim A. \smallskip
		
		The triangle inequality reveals for $x,y\in B_{r}(x_0)$
		\begin{align*}
			\left|\frac{u(x)}{g(x)}- \frac{u(y)}{g(y)}\right|\le \frac{|(u-qg)(x)-(u-qg)(y)|}{g(x)}+ |u(y)-qg(y)| \frac{|g(x)-g(y)|}{g(x)g(y)}.
		\end{align*}
		By claim A and \autoref{prop:hopf}, the first term in the previous estimate is bounded by $C|x-y|^{\alpha}$. The second term is easily estimated using \eqref{eq:expansion-help} and the $C^s$-regularity of $g$ from \autoref{th:Cs-regularity}. This yields the same estimate which proves for a constant independent of $r$ and $x_0$
		\begin{equation*}
			[u/g]_{C^\alpha(B_r(x_0))}\le C.
		\end{equation*}
		This completes a proof in connection with a chain argument, see \cite[Proposition 1.1]{RS14}. 
	\end{proof}
	\begin{proof}[{Proof of \autoref{th:boundary-harnack}}]
		We write $U_1=u_1/g$, $U_2=u_2/g$. Due to \autoref{th:boundary-estimate}, \autoref{prop:hopf}, and \autoref{lem:quotient-regularity}, we estimate for $x,y\in B_{1/2}\cap \overline{\Omega}$
		\begin{align*}
			\left|\frac{u_1(x)}{u_2(x)}- \frac{u_1(y)}{u_2(y)}\right|\le \frac{|U_1(x)-U_1(y)|}{|U_2(x)|} + \frac{|U_1(y)| |U_2(x)-U_2(y)|}{|U_2(x)U_2(y)|}\le C |x-y|^{\alpha}.
		\end{align*}
	\end{proof}


\end{document}